\documentclass{article}
\usepackage[english]{babel}
\usepackage{amsmath,amssymb,latexsym,bbm,tikz-cd,amsthm}
\usepackage{indentfirst}
\newcommand{\assign}{:=}

\newcommand{\dotminus}{\mathaccent95{-}}
\newcommand{\nobracket}{}
\newcommand{\of}{:}
\newcommand{\tmem}[1]{{\em #1\/}}
\newcommand{\tmmathbf}[1]{\ensuremath{\boldsymbol{#1}}}
\newcommand{\tmop}[1]{\ensuremath{\operatorname{#1}}}
\newcommand{\tmstrong}[1]{\textbf{#1}}
\newtheorem{theorem}{Theorem}
\newtheorem{corollary}[theorem]{Corollary}
\newtheorem{lemma}[theorem]{Lemma}
\newtheorem{remark}[theorem]{Remark}
\newcommand{\norm}[1]{\left\lVert#1\right\rVert}
\numberwithin{theorem}{section}

\begin{document}

\title{Isotopy Uniqueness of Self-diffeomorphisms of Handlebodies}
\author{Fang Sun and Xuezhi Zhao}
\maketitle
\begin{abstract}
The mapping class group $MCG(\Sigma_g)$ of a surface of genus $g$ has a long-history in topology and group theory. More recently, the mapping class group $MCG(V_g)$ of a handlebody $V_g$ of genus $g$ has become an interesting topic in the study of $3$ manifolds, largely thanks to Heegaard splitting. While $MCG(V_g)$  can be regarded naturally as a sub group of $MCG(\Sigma_g)$, we could not find any complete proof of this fundamental theorem. 
It is the purpose of this paper that we give a rigorous proof of embedding of $MCG(V_g)$ into $MCG(V_g)$. The key step is:  Any self-homeomorphism $f$ of handlebody $V_g$ of genus $g$ is ambient isotopic to identity  if the restriction $f|_{\partial V_g}$ is isotopic to identity.
\end{abstract}

\section{Introduction}
A handlebody $V_g$ of genus $g$ can be viewed as a 3-ball with $g$ 1-handles attached. 
Of great interest to topologists is the mapping class group $MCG(V_G)$, defined as the group of isotopy classes of orientation preserving self-homeomorphisms of $V_g$. Meanwhile. the mapping class group $MCG(\partial V_g)$ has been extensively studied (c.f. \cite{PrimerMCG}). The two groups are related by a restriction map $r: MCG(V_g) \rightarrow MCG(\partial V_g), r(f)=f_{| \partial V_g}$. If $r$ is injective, then knowledge about $MCG(\partial V_g)$ can be applied for the study of $MCG(V_g)$ (see \cite{PrimerHandlebody}). This injectivity is widely regarded as true, yet an explicit and detailed proof seems to be absent in the literature. A sketch of proof is given in \cite{PrimerHandlebody} Lemma 3.1, but it seems to have used technique exclusively from the smooth category (transversality) and the topological category (Alexander's trick) simultaneously. The main purpose of this paper is to provide a rigorous proof for the following result:

\begin{theorem} \label{MainTheoremTopo}
 Let $f$ be a self-homeomorphism of $V_g$ such that
  $f_{| \partial V_g}$ is isotopic to identity. Then f is ambient isotopic to identity.
\end{theorem}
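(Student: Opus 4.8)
The plan is to reduce, stage by stage, to Alexander's trick on a single $3$-ball. I will work throughout in the PL category: by Moise's theorem $V_g$ carries an essentially unique PL structure and every self-homeomorphism of $V_g$ is isotopic to a PL one, so we may assume $f$ is PL. This is what lets me use PL general position, the PL isotopy extension theorem, and the elementary PL Alexander trick uniformly, which is exactly the point where \cite{PrimerHandlebody} mixes categories. First, since $f|_{\partial V_g}$ is isotopic to $\mathrm{id}_{\partial V_g}$, the isotopy extension theorem lets me replace $f$ by an ambient-isotopic homeomorphism with $f|_{\partial V_g}=\mathrm{id}$; then, by uniqueness of collars, I further arrange that $f=\mathrm{id}$ on a fixed collar $C\cong\partial V_g\times[0,1]$. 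Fix a standard complete meridian system $D_1,\dots,D_g$ (the co-cores of the $1$-handles), chosen so that each $D_i\cap C=\partial D_i\times[0,1]$ is product-like and so that $D_i$ is non-separating in the handlebody $V_g\setminus(D_1\cup\cdots\cup D_{i-1})$.

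The core of the argument is to isotope $f$ to the identity near $D\assign D_1\cup\cdots\cup D_g$, one disk at a time. Assume inductively that $f$ fixes $\partial V_g$ and $D_1,\dots,D_{i-1}$ pointwise. Then $f(D_i)$ is disjoint from $D_1\cup\cdots\cup D_{i-1}$ (as $D_i$ is, and $f$ is injective) and agrees with $D_i$ on $C$, so $f(D_i)$ and $D_i$ are properly embedded disks in the handlebody $W\assign V_g\setminus(D_1\cup\cdots\cup D_{i-1})$ coinciding near their common boundary $\partial D_i$. After a general-position perturbation rel a neighborhood of $\partial D_i$, the intersection $f(D_i)\cap D_i$ is a finite disjoint union of circles in the interior of $W$; choosing one innermost on $f(D_i)$ and using that handlebodies are irreducible, an innermost-disk exchange removes it, so after finitely many steps $f(D_i)\cap D_i=\partial D_i$. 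Now $D_i\cup f(D_i)$ is an embedded $2$-sphere in $W$, which, by irreducibility of $W$, bounds a ball; pushing $f(D_i)$ across that ball gives an isotopy, supported in $W$ and rel $\partial V_g\cup D_1\cup\cdots\cup D_{i-1}$, carrying $f(D_i)$ onto $D_i$. Finally $f|_{D_i}$ is a self-homeomorphism of the disk $D_i$ fixing $\partial D_i$, so by Alexander's trick a further isotopy supported in a product neighborhood $D_i\times[-\varepsilon,\varepsilon]$ (hence preserving $D_i$) makes $f|_{D_i}=\mathrm{id}$. Here we use that $f$ is orientation-preserving — forced by $f|_{\partial V_g}\simeq\mathrm{id}$ — so that $f$ cannot interchange the two sides of any $D_j$. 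This completes the inductive step.

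After $g$ steps $f$ fixes $\partial V_g\cup D$ pointwise, preserves each side of each $D_i$, and remains orientation-preserving. Cutting $V_g$ along $D$ yields a $3$-ball $B$ (the $0$-handle, with the severed $1$-handles reabsorbed as collars), and $f$ lifts to a PL self-homeomorphism $\bar f$ of $B$. The sphere $\partial B$ is assembled from a planar subsurface of $\partial V_g$ together with the $2g$ copies of the disks $D_i$, and $f$ — hence $\bar f$ — is the identity on each of these pieces; thus $\bar f|_{\partial B}=\mathrm{id}$. By the PL Alexander trick, $\bar f$ is PL-isotopic to $\mathrm{id}_B$ rel $\partial B$, and since this isotopy fixes the copies of the $D_i$ it descends to an ambient isotopy of $V_g$, rel $\partial V_g$, from $f$ to $\mathrm{id}_{V_g}$. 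Therefore $f$ is ambient isotopic to the identity.

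The main obstacle is the inductive step of the second paragraph, namely making $f(D_i)$ \emph{literally equal} to $D_i$: this is where all the genuine $3$-dimensional content lives (general position for surfaces in a $3$-manifold, the innermost-disk exchange, irreducibility of handlebodies), and where the bookkeeping of staying rel the already-fixed disks and the already-fixed collar must be done carefully. Everything after the meridian disks have been pinned down is formal, resting only on Alexander's trick for a ball.
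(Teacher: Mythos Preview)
Your argument is correct and follows a genuinely different route from the paper's. Both proofs share the same geometric skeleton---fix the boundary, straighten a meridian disk via innermost-disk exchanges and irreducibility, then finish on a ball---but the categories differ. You invoke Moise's theorem to pass to the PL category and then use the PL Alexander trick on the final $3$-ball; the paper instead passes to the smooth category (via local contractibility of the homeomorphism group and denseness of diffeomorphisms) and must therefore invoke Hatcher's resolution of the Smale Conjecture (Theorem~\ref{SC}) at the corresponding step, together with substantial work (Theorems~\ref{PathI} and~\ref{PathII}) to keep all isotopies genuinely smooth. The paper also structures the induction differently, reducing genus by one at each stage rather than fixing all meridian disks at once and cutting to a single ball, but this is a minor organizational difference.

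What your approach buys is economy: for the topological statement~\ref{MainTheoremTopo} as written, the PL route avoids the deep analytic input of the Smale Conjecture entirely, and the innermost-disk and ball-bounding arguments are standard PL $3$-manifold topology. What the paper's approach buys is the stronger conclusions~\ref{MainTheoremSmooth} and~\ref{Handlebody}: a \emph{smooth} ambient isotopy rel boundary, which your PL argument does not directly yield. One small remark on your write-up: when you assert that $D_i\cup f(D_i)$ is a sphere bounding a ball, note that this sphere meets $\partial W$ along $\partial D_i$; to invoke irreducibility cleanly you should first use the common collar annulus to push to a sphere in $\operatorname{int}W$. Similarly, your justification that $f$ does not swap the sides of $D_i$ is more directly seen from $f=\mathrm{id}$ on the collar (hence near $\partial D_i$) than from global orientation. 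These are routine details and do not affect the validity of the argument.
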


Every self-homeomorphism on a compact 3-manifold with boundary is ambient isotopic to a diffeomorphism. This can be proved using local contractibility of the group of homeomorphisms for a compact manifold (see \cite{LocalContractHomeo}), together with the denseness of diffeomorphisms in that group for dimension three (\cite{ObstructionSmoothingHomeo} Theorem 6.3). Hence it suffice to work within the $C^\infty$ category, and prove the following:

\begin{theorem} \label{MainTheoremSmooth}
 Let $f$ be a self-diffeomorphism of $V_g$ such that
  $f_{| \partial V_g}$ is smoothly isotopic to identity. Then f is smoothly ambient isotopic to identity.
\end{theorem}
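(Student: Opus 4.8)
The plan is to reduce the statement, in stages, to a sequence of increasingly rigid normal forms for $f$, eventually landing on a map that is the identity near the boundary and then applying a Cerf/Alexander-type argument on the remaining ball pieces.

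The plan is to isotope $f$ into successively more rigid normal forms: first the identity on $\partial V_g$ together with a collar, then the identity on a complete system of meridian disks, and finally a self-diffeomorphism of a $3$-ball fixing its boundary, at which point Cerf's theorem applies. After the first reduction every isotopy is performed rel $\partial V_g$.

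\emph{Normalizing near the boundary.} Pick an isotopy $h_t$ of $\partial V_g$ with $h_0=\mathrm{id}$ and $h_1=f|_{\partial V_g}$. Via a collar $\partial V_g\times[0,1]\hookrightarrow V_g$, the isotopy extension theorem produces an ambient isotopy of $V_g$, supported in the collar, restricting to $h_t$ on $\partial V_g$; composing $f$ with its time-$1$ inverse we may assume $f|_{\partial V_g}=\mathrm{id}$. A standard collar-straightening lemma then lets us further assume that $f$ is the identity on a neighborhood of $\partial V_g$.

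\emph{Normalizing a disk system.} Fix properly embedded disks $D_1,\dots,D_g$ that cut $V_g$ into a $3$-ball and such that $V_g$ cut along $D_1\cup\cdots\cup D_j$ is a handlebody for every $j$ (e.g.\ the standard meridian system). We induct on $i$, assuming $f$ is already the identity on $D_1\cup\cdots\cup D_{i-1}$. Because $f$ fixes a neighborhood of $\partial V_g$, the disk $f(D_i)$ is properly embedded with $\partial f(D_i)=\partial D_i$ and agrees with $D_i$ near that curve. Since each $D_j$ is a disk, an innermost-circle argument using irreducibility of handlebodies removes all intersection circles of $f(D_i)$ with $D_1\cup\cdots\cup D_{i-1}$, so after an isotopy rel $\partial V_g\cup D_1\cup\cdots\cup D_{i-1}$ we may assume $f(D_i)$ is disjoint from those disks and hence lies in the complementary handlebody $W$. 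Inside $W$, $f(D_i)$ and $D_i$ are properly embedded disks with the same boundary that agree near it; the standard fact that in an irreducible $3$-manifold such a disk is determined up to isotopy rel a neighborhood of its boundary (again proved by general position and innermost disks) produces an ambient isotopy of $W$, supported in its interior, taking $f(D_i)$ to $D_i$; this descends to an ambient isotopy of $V_g$ rel $\partial V_g\cup D_1\cup\cdots\cup D_{i-1}$, and after composing $f$ with it we have $f(D_i)=D_i$. Finally $f|_{D_i}$ is a diffeomorphism of the disk $D_i$ fixing $\partial D_i$ pointwise, hence isotopic to the identity rel $\partial D_i$ because $\mathrm{Diff}(D^2\ \mathrm{rel}\ \partial D^2)$ is connected; extending that isotopy with support in a small neighborhood of $D_i$ yields $f|_{D_i}=\mathrm{id}$ and closes the induction.

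\emph{The ball, Cerf's theorem, and where the difficulty lies.} Cutting $V_g$ along $D_1\cup\cdots\cup D_g$ gives a $3$-ball $B$, and as $f$ preserves each $D_i$, restricts to the identity on it and on $\partial V_g$, and is the identity near $\partial V_g$, it descends to a diffeomorphism $\bar f\colon B\to B$ with $\bar f|_{\partial B}=\mathrm{id}$ after the routine smoothing of the corners along the $\partial D_i$. Cerf's theorem---every diffeomorphism of $D^3$ fixing $\partial D^3$ pointwise is isotopic to the identity rel $\partial D^3$---gives an isotopy of $\bar f$ to $\mathrm{id}_B$ rel $\partial B$, which reglues to an ambient isotopy of $V_g$ from $f$ to $\mathrm{id}$. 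I expect the genuine work to be the disk-normalization step: maintaining transversality near $\partial D_i$ where $f(D_i)$ is forced to coincide with $D_i$, checking that every innermost-disk surgery and the final ball-swallowing isotopy is carried out rel $\partial V_g$ and rel the already-normalized disks (which is why one first pushes $f(D_i)$ off those disks and then works inside the complementary handlebody), and verifying smoothness of the cut-open map. The only external inputs are irreducibility of handlebodies, connectedness of $\mathrm{Diff}(D^2\ \mathrm{rel}\ \partial D^2)$, and Cerf's theorem in dimension three; the rest is general position and the isotopy extension theorem.
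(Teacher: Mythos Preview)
Your outline is essentially the same strategy the paper follows: reduce to $f|_{\partial V_g}=\mathrm{id}$, normalize $f$ on meridian disks, and finish with Cerf's theorem on the $3$-ball. The paper organizes the middle step slightly differently: instead of normalizing all $g$ disks and then cutting to a single ball with corners, it normalizes just \emph{one} disk $D$, arranges $f=\mathrm{id}$ on a neighborhood of $\partial V_g\cup D$, and then embeds a smooth genus-$(g{-}1)$ handlebody $V_{g-1}$ in $\mathrm{Int}\,V_g$ with $f=\mathrm{id}$ outside it; the inductive hypothesis finishes the job. This buys two things: no corner-smoothing is ever needed, and one never has to verify that the isotopies fixing $D_i$ are rel the previously normalized $D_1,\dots,D_{i-1}$---that bookkeeping is absorbed into the induction.

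Where you write ``the standard fact'' and ``innermost disks,'' the paper spends almost all of its effort. The six steps for the solid torus (transversality, removing intersection circles via the smooth Schoenflies theorem and carefully constructed $3$-balls, pushing $f(D)$ onto $D$ via collar-uniqueness, straightening the tubular neighborhood via fiber derivatives) are exactly the ``disk-normalization'' you flag as the hard part. Your identification of the difficulty is accurate; just be aware that in the smooth category the innermost-disk surgery is not a one-liner---the paper needs the $3$-dimensional smooth Schoenflies theorem and a delicate isotopy-extension argument to make each circle removal an honest ambient isotopy rel $\partial V_g$, and further work (Theorems~\ref{PathI} and~\ref{PathII} in the paper) to upgrade Cerf's path to one through $\mathrm{Diff}_{\partial,U}$ so it glues back smoothly. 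None of this is a gap in your plan, but it is where a full proof would have to spend its pages.
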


Theorem \ref{MainTheoremSmooth} will be a consequence of the following stronger result (see Section 4):
\begin{theorem} \label{Handlebody}
  Let $f$ be a self-diffeomorphism of $V_g$ such that
  $f_{| \partial V_g} = \tmop{id}$. Then f is ambient isotopic rel
  $\partial V_g$ to identity.
\end{theorem}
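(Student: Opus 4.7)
The plan is a standard cut-and-paste argument on a complete meridian disk system followed by an application of the smooth three-dimensional Alexander trick (Cerf's theorem). Fix pairwise disjoint, properly embedded disks $D = D_{1}\sqcup\cdots\sqcup D_{g}$ whose complement in $V_{g}$ is a $3$-ball. Since $f|_{\partial V_{g}}=\operatorname{id}$, after a small isotopy of $f$ supported in the interior we may assume $f(D)$ is transverse to $D$; then $f(D)\cap D$ is a disjoint union of properly embedded arcs and circles, with $\partial f(D_{i})=\partial D_{i}$ pointwise.

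First I would eliminate every circle of intersection by the innermost-disk argument: an innermost circle $C$ on $D$, together with the sub-disk it bounds in $f(D)$, forms an embedded $2$-sphere in the interior of $V_{g}$, which bounds a $3$-ball by irreducibility of the handlebody, and an isotopy of $f$ supported in that ball removes $C$. Since $\partial D_{i}\cap\partial D_{j}=\emptyset$ for $i\neq j$, all surviving intersections are arcs in $\bigcup_{i}f(D_{i})\cap D_{i}$. These I would remove by the outermost-arc argument: an outermost arc $\alpha$ on $D_{i}$ cuts off a sub-disk $\Delta\subset D_{i}$ with interior disjoint from $f(D)$, and $\alpha$ also bounds a sub-disk $E\subset f(D_{i})$; the union $E\cup\Delta$ is an embedded disk in $V_{g}$ with boundary a simple closed curve on $\partial V_{g}$, and irreducibility together with disk uniqueness in $V_{g}$ supplies a ball whose use affords an isotopy of $f$ rel $\partial V_{g}$ that strictly decreases $|f(D)\cap D|$. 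Iterating leaves $f(D)\cap D=\partial D$.

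At this stage each $f(D_{i})$ and $D_{i}$ are disjoint in the interior and share boundary, so their union cobounds a ball in $V_{g}$ that provides a rel-boundary isotopy of $f(D_{i})$ onto $D_{i}$. After this isotopy $f(D_{i})=D_{i}$ setwise; the induced map $f|_{D_{i}}\colon D_{i}\to D_{i}$ fixes $\partial D_{i}$, and by Smale's theorem $\pi_{0}(\operatorname{Diff}(D^{2},\partial D^{2}))=0$ it is isotopic rel boundary to $\operatorname{id}$, so a final isotopy supported in a bicollar of $D$ arranges $f|_{D}=\operatorname{id}$. Cutting $V_{g}$ along $D$ yields a $3$-ball $\widetilde{B}$ on which $f$ lifts to a diffeomorphism fixing $\partial\widetilde{B}$ pointwise; by Cerf's theorem $\pi_{0}(\operatorname{Diff}(D^{3},\partial D^{3}))=0$ this lift is smoothly isotopic rel boundary to the identity, and the isotopy descends to the required ambient isotopy of $V_{g}$ rel $\partial V_{g}$.

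The principal obstacles are technical rather than conceptual. In the outermost-arc step, $\Delta$ meets $\partial V_{g}$ along an arc, so a naive disk-exchange across $\Delta$ would displace points of $\partial D_{i}$; the isotopy must instead be built from a bicollar of $E\cup\Delta$ carefully adapted to the boundary structure of $V_{g}$ so as to remain rel $\partial V_{g}$. The disk-uniqueness assertion similarly merits an explicit proof in the smooth, rel-boundary category. Aside from these bookkeeping issues, the only non-elementary input is Cerf's theorem $\Gamma_{4}=0$.
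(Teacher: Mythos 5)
Your plan shares the paper's overall architecture (isotope $f$ so that $f(D)$ meets a meridian disk system $D$ nicely, then reduce to the $3$-ball and invoke Cerf), but it differs strategically: you cut along all $g$ meridian disks simultaneously and reduce to a single ball, whereas the paper inducts on the genus, normalizing $f$ near \emph{one} meridian disk at a time via the six-step procedure of Section~3 and then invoking the induction hypothesis on a genus-$(g-1)$ sub-handlebody. Both are defensible in outline; the one-disk-at-a-time version makes the collar, transversality, and smoothing bookkeeping in steps~1--5 of Section~3 far easier to carry out rigorously.

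There is, however, a genuine gap at the start of your argument. You assert that a small isotopy of $f$ \emph{supported in the interior} makes $f(D)$ transverse to $D$, and you then worry about arcs of intersection. But near $\partial D_{i}$ the position of $f(D_{i})$ relative to $D_{i}$ is governed by the $1$-jet of $f$ along $\partial V_{g}$, which an interior-supported isotopy cannot touch; in the natural normalization where $f=\tmop{id}$ on a neighborhood of $\partial V_{g}$, the surfaces $f(D_{i})$ and $D_{i}$ coincide on a collar of $\partial D_{i}$ and are not transverse there, and no perturbation supported in $\tmop{Int}V_{g}$ fixes this. The paper resolves this with the preliminary ``collar twist'' of Theorem~\ref{1st}: $f$ is isotoped rel $\partial V_{g}$ so that on a boundary collar it acts by $((1-r)z_{1},z_{2})\mapsto((1-r)z_{1},e^{ir}z_{2})$, forcing $f(D^{2}\times P_{0})$ to meet $D^{2}\times P_{0}$ near the boundary transversely and only along $\partial D^{2}\times P_{0}$. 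After that, all remaining intersections are interior circles; arcs never arise, and the entire outermost-arc step you flag as ``technical rather than conceptual'' is avoided. Your own observation that a disk-exchange across $\Delta$ would displace $\partial D_{i}$ is precisely why that step is not merely technical---without the twist (or an equivalent mechanism for controlling the boundary $1$-jet), the proof stalls there. A secondary omission: to glue the Cerf isotopy of the cut ball $\widetilde{B}$ back to a \emph{smooth} ambient isotopy of $V_{g}$, the isotopy must be stable on a neighborhood of the two copies of each $D_{i}$, not merely on $D_{i}$ itself; the paper arranges $f=\tmop{id}$ on a neighborhood of $D^{2}\times P_{0}$ in the fifth step and then invokes Theorems~\ref{PathI} and~\ref{PathII} to produce a path through $\tmop{Diff}_{\partial,U}$, a point your proposal would also need.
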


We will prove \ref{Handlebody} by induction. The induction starts with $g=1$, where the handlebody is just a solid torus. Because a solid torus is easy to parametrize, and because the proof of the induction step is almost parallel to the initial step, we shall prove the following special case in detail, and indicate the necessary modifications for the induction step.

\begin{theorem} \label{Main}
  Let $f$ be a self-diffeomorphism of $\mathcal{M}= D^2 \times S^1$ such that
  $f_{| \partial \mathcal{M}} = \tmop{id}$. Then f is ambient isotopic rel
  $\partial \mathcal{M}$ to identity.
\end{theorem}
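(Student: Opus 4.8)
\emph{Proof sketch.} The plan is to normalize $f$ through a finite sequence of ambient isotopies of $\mathcal{M}$ rel $\partial\mathcal{M}$: first to a diffeomorphism equal to the identity near $\partial\mathcal{M}$, then to one that fixes a chosen meridian disk $D=D^2\times\{1\}$ setwise, then to one fixing $D$ pointwise, and finally to $\tmop{id}$ by cutting $\mathcal{M}$ open along $D$. To begin, a collar $\partial\mathcal{M}\times[0,1]\hookrightarrow\mathcal{M}$ together with the uniqueness of collars gives an ambient isotopy rel $\partial\mathcal{M}$ after which $f$ is the identity on a closed collar neighborhood $N$ of $\partial\mathcal{M}$. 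This buys two things: $f(D)$ now agrees with $D$ on $N$, and every later isotopy can be arranged with support in $\mathcal{M}\setminus N$, so that being rel $\partial\mathcal{M}$ is automatic.

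Next I would straighten the meridian disk. Since $f$ fixes $\partial\mathcal{M}$, the image $f(D)$ is a properly embedded disk with $\partial f(D)=\partial D$ agreeing with $D$ on $N$, and the heart of the proof is to show that $f(D)$ is ambient isotopic rel $\partial\mathcal{M}$ to $D$. Perturb $f(D)$, keeping it equal to $D$ near $\partial\mathcal{M}$, so that its interior is transverse to $D$; then $f(D)\cap D$ consists of a collar of $\partial D$ in $D$ together with finitely many circles in the interior of $\mathcal{M}$. A standard innermost-circle argument removes these circles: an innermost one on $D$ bounds a subdisk meeting $f(D)$ only along that circle, which together with the subdisk of $f(D)$ bounded by that circle forms an embedded $2$-sphere in the interior of $\mathcal{M}$; this sphere bounds a ball because the solid torus is irreducible, and pushing $f(D)$ across it lowers the number of intersection circles. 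When none remain, $D$ and $f(D)$ coincide near $\partial D$ and are otherwise disjoint, so their non-overlapping parts together bound a ball lying in the interior of $\mathcal{M}$; pushing $f(D)$ across this ball, by an ambient isotopy supported in the interior, makes $f(D)=D$. As $f|_D$ now fixes $\partial D$, it is orientation-preserving on $D$ and hence preserves the two sides of $D$.

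To finish, note that $f|_D$ is a self-diffeomorphism of the disk $D$ fixing $\partial D$ pointwise; since $\tmop{Diff}(D^2,\partial D^2)$ is connected by Smale's theorem, $f|_D$ is isotopic rel $\partial D$ to $\tmop{id}_D$, and extending this isotopy to an ambient isotopy of $\mathcal{M}$ supported in a bicollar of $D$ inside $\mathcal{M}\setminus N$ (isotopy extension theorem) lets us also assume $f|_D=\tmop{id}_D$, still with $f(D)=D$. Cutting $\mathcal{M}$ along $D$ yields a $3$-ball $W\cong D^2\times[0,1]$, and since $f$ fixes $D$ pointwise, preserves its sides, and fixes $\partial\mathcal{M}$, it induces a diffeomorphism $F$ of $W$ equal to the identity on all of $\partial W\cong S^2$. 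By Cerf's theorem that $\tmop{Diff}(D^3,\partial D^3)$ is connected, $F$ is isotopic rel $\partial W$ to $\tmop{id}_W$; as that isotopy is the identity on both copies of $D$, it reglues to an ambient isotopy of $\mathcal{M}$, rel $\partial\mathcal{M}$, from $f$ to $\tmop{id}$, proving Theorem~\ref{Main}.

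I expect the second step to carry essentially all of the difficulty, and to be exactly the point at which the existing sketch in the literature is incomplete: the innermost-disk surgery must be done entirely in the smooth category, and --- the genuinely delicate part --- one must check that each isotopy used, in particular the final ``push across the ball'', has support in $\mathcal{M}\setminus N$ and is therefore rel $\partial\mathcal{M}$. This rests on verifying that the relevant $2$-spheres meet the boundary torus at most along the nonseparating curve $\partial D$, so that the balls they bound lie in the interior of $\mathcal{M}$ --- an argument using irreducibility of the solid torus and connectedness of the torus cut along a nonseparating curve --- and can be swept away without moving $\partial\mathcal{M}$. With that in hand the remaining steps are routine, Smale's and Cerf's theorems entering only as standard citations. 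For the inductive proof of Theorem~\ref{Handlebody} the same construction applies almost verbatim, the sole change being that cutting $V_g$ along a nonseparating meridian disk produces a handlebody of genus $g-1$ rather than a $3$-ball, so the concluding use of Cerf's theorem is replaced by the inductive hypothesis.
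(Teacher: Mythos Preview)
Your outline is correct and matches the paper's six-step proof essentially step for step: normalize near the boundary, achieve transversality of the meridian disk, run an innermost-circle argument, arrange $f(D)=D$, then $f|_D=\tmop{id}$, and finish via the $D^3$ result. Two technical differences are worth noting. First, the paper's opening move is \emph{not} to make $f=\tmop{id}$ near $\partial\mathcal{M}$ but to impose a deliberate twist there (formula~(1)), so that $f(D)$ is pushed \emph{off} $D$ near the boundary rather than coinciding with it on an annulus; this keeps the transversality and innermost-circle steps clean, and $f$ is only straightened to $\tmop{id}$ near $\partial\mathcal{M}$ afterward (Step~4). Second, you anticipate the innermost-circle step as carrying essentially all the difficulty and call the final steps routine, but the paper spends comparable effort there: getting $f=\tmop{id}$ on a full \emph{neighborhood} of $D$ (Step~5, via tubular-neighborhood uniqueness and fiber derivatives) and then upgrading the $D^3$ isotopy coming from the Smale conjecture to one lying in $\tmop{Diff}_{\partial,U}(D^3)$ (Theorems~\ref{PathI} and~\ref{PathII}); without this, the isotopy you obtain on the cut-open ball $W$ is only the identity \emph{on} the two copies of $D$, and need not reglue to a \emph{smooth} ambient isotopy of $\mathcal{M}$.
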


The paper is organized as follows: In Section 2 we present some preliminaries that will be useful for our purpose. In Section 3 we give a proof of \ref{Main}. Finally, in Section 4 we finish the induction and prove Theorem \ref{Handlebody}, alongside \ref{MainTheoremSmooth}.

\section{Preliminaries}

A few preliminary results are needed before we start with the proof of the
theorem.

Throughout this paper, we denote the unit interval $[0, 1]$ by $I$.

\subsection{Review of Isotopy, Tubular Neighborhoods}

To avoid confusion on terminology, we review the definition of various notions
of isotopy in the following. In these definitions, we assume all spaces to be
smooth ($C^\infty$) manifolds and all maps to be smooth.

An {\tmem{isotopy}} is a map $F : X \times I \rightarrow Y$ such that the
{\tmem{track}} $\widehat{F} : X \times I \rightarrow Y \times I$ defined by
$\widehat{F} (x, t) = (F (x, t), t), x \in X, t \in I$ is an embedding. We also
say $F$ is an isotopy between $F_0$ and $F_1$, and $F_0$ and $F_1$ are
isotopic. If $A$ is a subspace of $X$ such that $F_t = F_0$ on $A$ for all $t
\in I$, then $F$ is said to be {\tmem{stable on}} $A$. We also call $F$ an
{\tmem{isotopy rel}} $A$. When $X$ is a smooth manifold with boundary, $X \times I$ is a smooth manifold with corner. Recall that a map $f : M \rightarrow N$ between smooth manifolds with corners is called smooth if for any (corner) charts $\phi :U \rightarrow M$, $ \varphi :V \rightarrow N$, $\varphi^{-1} \circ f \circ \phi$ admits an extension to a smooth map in an open (with respect to $\mathbb{R}^m$) neighborhood of each point, where $m= dim M$.

The {\tmem{support}} of an isotopy $F : X \times I \rightarrow Y$, denoted as
$\tmop{supp} F$, is the closure of $\{ x \in X|F (x, t) \neq F (x, 0)
\tmop{for} \tmop{some} t \in I \}$.

If $Z$ is a (embedded) submanifold of $Y$, an {\tmem{isotopy of}} $Z$
{\tmem{in}} $Y$ is an isotopy $F : Z \times I \rightarrow Y$ where $F_0$ is
the identity.

An {\tmem{ambient isotopy}} of Y is an isotopy $H : Y \times I \rightarrow Y$
such that $H_0 = \tmop{id}$ and the track $\hat{H} : Y \times I \rightarrow Y
\times I$ is a diffeomorphism. In particular, for each $t \in I$, $H_t$ is a
diffeomorphism of $Y$.

An isotopy $F : X \times I \rightarrow Y$ is {\tmem{ambient}} if there is an
ambient isotopy $H$ of $Y$ with $F = H \circ (F_0 \times \tmop{id}_I)$. In
this case we say $F_0$ and $F_1$ are ambient isotopic. If $F$ is ambient, so
is $\bar{F} : (x, t) \mapsto F (x, 1 - t)$.

The relation of two maps being isotopic is an equivalence relation: the only
nontrivial part of this claim is transitivity, for which one need to smooth
the joint of two consecutive isotopies (see for example \cite{Hirsch} p.111). The
same is true with ``isotopic'' replaced by ``ambient isotopic''.

Let $A$ be a subset of $Y$, it is easy to verify that a diffeomorphism $f : Y
\rightarrow Y$ is ambient isotopic rel $A$ to identity if and only if there is
an ambient isotopy of $Y$ ending with $f$ and stable on $A$.

Finally, recall that a {\tmem{time-dependent vector field}} on a smooth
manifold $Y$ is a smooth $G : Y \times I \rightarrow T Y$ such that $G (y, t)
\in T_y Y$, where $T Y$ is the tangent bundle of $Y$ and $T_y Y$ is the tangent
space at $y \in Y$. The {\tmem{support}} of $G$ (denoted as $\tmop{supp} G$)
is defined as the closure in $Y$ of $\{ y \in Y|G (y, t) \neq 0 \tmop{for}
\tmop{some} t \in I \}$. A compactly supported time dependent vector field $G$
on $Y$ generates a compactly supported ambient isotopy $F$ of $Y$, in the
sense that
\[ \frac{\partial F}{\partial t} (y, t) = G (F (y, t), t) \]
(c.f. \cite{Hirsch} Theorem 8.1.1)

We will be using the definition of tubular neighborhoods that appears in
\cite{Hirsch}. To be precise, let $N$ be a submanifold of $M$, a {\tmem{tubular
neighborhood}} of $N$ in $M$ is a smooth embedding of a vector bundle over $N$
into $M$ that is an open map and equals to the inclusion of $N$ when
restricted to the zero section.

\subsection{The Isotopy Extension Theorem}

We shall use the following Isotopy Extension Theorem (\cite{Hirsch} Theorem 8.1.3),
as well as its proof.

\begin{theorem} \label{IET}
  Let M be a smooth manifold without boundary. Let $N \subset M$ be a compact
  embedded submanifold without boundary, and $F : N \times I \rightarrow M$ be an
  isotopy of $N$. Then $F$ can be extended to an ambient isotopy of $M$ having
  compact support.
\end{theorem}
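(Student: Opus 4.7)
The plan is to convert the given isotopy $F$ into an ambient one by constructing a compactly supported time-dependent vector field on $M$ whose flow extends $F$. First, observe that the track $\widehat{F}: N \times I \to M \times I$ is an embedding by hypothesis, and since $N \times I$ is compact, its image $W \assign \widehat{F}(N \times I)$ is a compact embedded submanifold of $M \times I$. Define a vector field $V$ on $W$ by pushing forward $\partial_t$ along $\widehat{F}$: concretely, at $\widehat{F}(n,t)$ set $V = (\partial_t F(n,t), 1)$, viewed as an element of $T_{F(n,t)}M \oplus T_t I$. This is the velocity field of the track, and its $TI$-component equals $1$ everywhere on $W$.

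Next, I would extend $V$ smoothly to a neighborhood of $W$ in $M \times I$. Invoking the tubular neighborhood theorem, identify a neighborhood $U$ of $W$ with (a subbundle of) the normal bundle of $W$ and extend by pulling back along the tubular projection; by extending only the $TM$-part of $V$ and then adding back $\partial_t$, one arranges the extension $\widetilde{V}$ to have $TI$-component identically $1$ on $U$. Then cut off by a smooth bump function $\rho: M \times I \to [0,1]$ equal to $1$ on a neighborhood of $W$ and supported in a compact subset of $U$, applying $\rho$ only to the $TM$-part of $\widetilde{V}$. The result is a compactly supported time-dependent vector field $X$ on $M$ which, at every point of $W$, recovers the $TM$-component of $V$.

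By the standard existence theorem for flows of compactly supported time-dependent vector fields (cited in the preliminaries), $X$ integrates to a smooth map $H: M \times I \to M$ with $H_0 = \tmop{id}$, each $H_t$ a compactly supported self-diffeomorphism of $M$, and track $\widehat{H}$ a diffeomorphism of $M \times I$; hence $H$ is an ambient isotopy. Finally, one checks $H \circ (F_0 \times \tmop{id}_I) = F$: both maps $N \times I \to M$ agree at $t = 0$ with $F_0$, and both satisfy the ODE $\partial_t \gamma(n, t) = X(\gamma(n, t), t)$, the equality for the right-hand side being exactly how $X$ was designed along $W$. Uniqueness of integral curves then forces equality, so $F$ is ambient with compactly supported ambient extension $H$.

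The main technical obstacle is the extension step: ensuring that $\widetilde{V}$ retains unit $TI$-component throughout $U$, so the resulting flow is genuinely parametrized by the original time $t \in I$ rather than by a rescaled parameter. A secondary subtlety is arranging that the tubular-neighborhood extension and the bump-function cutoff are compatible with the product structure of $M \times I$; both are handled by performing all constructions on the $TM$-component alone and treating the $\partial_t$ direction separately.
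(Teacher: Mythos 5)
Your proposal reproduces Hirsch's proof of Theorem~8.1.3, which is exactly what the paper uses (it gives no independent argument, just the reference to Hirsch p.~180): push forward $\partial_t$ along the track, extend to a compactly supported vector field on $M\times I$ with $TI$-component identically $1$ via a tubular neighborhood of $\widehat{F}(N\times I)$ and a bump function, and integrate the resulting flow. The only cosmetic imprecision is that ``pulling back along the tubular projection'' does not literally yield a vector field on $U$ (the relevant tangent spaces $T_{p}M$ change as $p$ moves off $W$); the standard fix is to extend in local charts and patch with a partition of unity (as in Lee, Lemma~8.6), which is precisely what the paper itself does in the analogous step of Lemma~\ref{VF}.
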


\begin{proof}
  See \cite{Hirsch} p.180.
\end{proof}

We will need to deal with the case where $N$ has non-empty boundary. The
original proof uses a tubular neighborhood of $\widehat{F} (N \times I)$ to extend
the vector field $X$. To avoid dealing with tubular neighborhoods of
submanifolds with corners, we use the following lemma as a way around.

\begin{lemma} \label{VF}
  Let $M$ be a smooth manifold without boundary and $N \subset M$ be a compact
  embedded submanifold with boundary. Let $F : N \times I \rightarrow M$ be an
  isotopy of $N$ in $M$, and let $\widehat{F}$ be the track of $F$. Define a
  vector field $X'$ on $\widehat{F} (N \times I)$ as the tangent vectors of curves
  $t \mapsto \widehat{F} (x, t), x \in N$. Then there exists a compactly supported
  smooth vector field $X$ on $M \times I$ that extends $X'$. 
\end{lemma}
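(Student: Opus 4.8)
The plan is to reduce the construction of $X$ to the standard tubular-neighborhood argument used for the boundaryless case by first \emph{doubling} or rather \emph{collaring-and-extending} the situation so that all submanifolds with corners are replaced by honest submanifolds without corners inside an ambient manifold without boundary. Concretely, $\widehat{F}(N\times I)$ is a compact submanifold of $M\times I$, but it has corners (along $\partial N\times\{0,1\}$) and the ambient space $M\times I$ itself has boundary; both features are exactly what the classical proof avoids. So the first step is to enlarge $M\times I$ to the open manifold $M\times\mathbb{R}$ (no boundary), and to enlarge the isotopy. Using a collar of $\partial N$ in $N$ together with the given isotopy $F$, I would extend $F$ to an isotopy $\widetilde F:\widetilde N\times\mathbb{R}\to M\times\mathbb{R}$, where $\widetilde N$ is a slightly larger open submanifold of $M$ containing $N$ (for instance a tubular neighborhood of $N$ in $M$, if $N$ has positive codimension, or $N$ itself shrunk to its interior thickened by a collar if $N$ is codimension $0$); the extension in the $I$-direction is obtained by making $\widetilde F$ constant in $t$ for $t\le \varepsilon$ and $t\ge 1-\varepsilon$ and then extending to all of $\mathbb{R}$ as the identity outside a compact set. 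The point of this step is that $\widehat{\widetilde F}(\widetilde N\times\mathbb{R})$ is now a submanifold \emph{without} boundary or corners, sitting in the manifold $M\times\mathbb{R}$ which is \emph{without} boundary.

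The second step is to apply the proof of Theorem \ref{IET} (the boundaryless Isotopy Extension Theorem) verbatim to $\widehat{\widetilde F}$: its proof produces a compactly supported vector field on $M\times\mathbb{R}$ by choosing a tubular neighborhood of the track image and pushing forward the canonical ``unit speed in the $t$-direction'' vector field $X'$, then cutting off with a bump function. Wait — there is a subtlety: the classical statement requires $N$ compact, whereas $\widetilde N$ is not compact. This is handled by noting that only the behaviour near the compact set $\widehat{F}(N\times I)$ matters: one chooses the tubular neighborhood and bump function supported in a compact neighborhood of $\widehat{F}(N\times I)$ in $M\times\mathbb{R}$, so compactness of the \emph{relevant} part of the track is all that is used. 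Restricting the resulting vector field back to $M\times I$ — or rather observing that it is already tangent to $M\times I$ along $M\times\{0,1\}$ because $\widetilde F$ was arranged to be $t$-constant there, hence its track vector field has zero $t$-component nowhere needed to point outward — gives a compactly supported smooth vector field $X$ on $M\times I$. By construction its restriction to $\widehat F(N\times I)$ is $X'$, since $\widehat{\widetilde F}$ restricts to $\widehat F$ on $N\times I$.

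The third step is to verify the boundary compatibility carefully: we need $X$ to be a genuine smooth vector field on the manifold-with-boundary $M\times I$, which in particular means it must be \emph{tangent} to the boundary $M\times\{0\}$ and $M\times\{1\}$ (otherwise the flow would leave $M\times I$). Because $\widetilde F$ is constant in $t$ on $[0,\varepsilon]$ and $[1-\varepsilon,1]$, the track vector field $X'$ has vanishing $\partial/\partial t$-component there — this is false; the track always has $\partial_t$-component equal to $1$. Let me correct the design: instead I would take $\widetilde F$ defined on $\widetilde N\times\mathbb{R}$ but only require the extended \emph{vector field} (not the isotopy) to be modified near $t=0,1$; that is, after obtaining the vector field $Y$ on $M\times\mathbb{R}$ from the boundaryless argument, multiply its ambient $M$-component and keep a full $\partial_t$-component — in fact the vector field we want on $M\times I$ is $X'(x,t)=\partial_t + (\text{velocity of } F_t \text{ at } F(x,t))$, which already has constant $\partial_t$-part $1$ everywhere, so it is automatically ``inward-pointing-compatible'' in the sense that its flow preserves $M\times I$ for $t\in[0,1]$ and this is exactly what is needed; tangency to the boundary is not required, only that the flow starting at $t=0$ stays defined up to $t=1$, which holds since the $t$-coordinate increases at unit speed. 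Thus the only real content is the smooth \emph{extension} off the track, and that is supplied by the tubular neighborhood of $\widehat{\widetilde F}(\widetilde N\times\mathbb{R})$ in $M\times\mathbb{R}$ exactly as in \cite{Hirsch} p.180.

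I expect the main obstacle to be the bookkeeping in the first step: producing the enlarged isotopy $\widetilde F$ of an open neighborhood $\widetilde N\supset N$ so that (a) $\widehat{\widetilde F}$ is an embedding of a corner-free, boundary-free submanifold, (b) it genuinely restricts to $\widehat F$ over $N\times I$, and (c) everything of interest stays inside a compact set so that the non-compactness of $\widetilde N$ causes no trouble when we invoke the compactly-supported conclusion of the boundaryless theorem. Once $\widetilde F$ is in hand, the remainder is a direct citation of the proof of Theorem \ref{IET}.
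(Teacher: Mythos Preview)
Your approach differs from the paper's in one essential respect: you try to kill the corners by enlarging in \emph{both} the $N$-direction and the $I$-direction, whereas the paper enlarges only in the $I$-direction. The paper extends $F$ smoothly to $N\times\mathbb{R}$ (a routine closed-subset extension, cf.\ \cite{LeeSM} Corollary 6.27), then uses openness of embeddings and compactness of $N$ to get that $\widehat F$ is an embedding on $N\times(-\varepsilon,1+\varepsilon)$. The image is then an embedded submanifold \emph{with boundary but without corners} of $M\times\mathbb{R}$, and for such submanifolds ordinary boundary slice charts (\cite{LeeSM} Theorem 5.51) already suffice to extend the vector field locally; a partition of unity and a cutoff finish the job. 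No tubular neighborhood of the track is ever needed.

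Your route is in principle viable, but the step you yourself flag as the obstacle---producing $\widetilde F$ on an open $\widetilde N\supset N$---is not bookkeeping; it is essentially the same extension problem you are trying to solve. Extending a smooth map off a submanifold with boundary into a manifold $M$ cannot be done by naive partition-of-unity averaging (the target is not linear), so you would need to embed $M$ in Euclidean space, average there, and retract, then re-verify that the track is an embedding on some neighborhood of $N\times I$---delicate because $\widetilde N$ is non-compact, so openness of embeddings does not apply directly. All of this can be made to work, but it duplicates effort that the paper's one-directional extension sidesteps entirely. The conceptual point you are missing is that removing the corners in the $t$-direction alone already downgrades the problem from ``submanifold with corners'' to ``submanifold with boundary'', and the latter needs no tubular neighborhood---slice charts are enough. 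Your digression on tangency to $M\times\{0,1\}$ is also unnecessary: the lemma only asks for a smooth vector field on $M\times I$ extending $X'$, with no tangency condition.
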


\begin{proof}
  Embed $N \times I$ as a closed subset in $N \times \mathbb{R}$. One may choose charts of $N \times I$ of the form $W \times J$, where $W$ is a chart of $N$ and $J$ is either $(0,1]$ or $[0,1)$. Using such charts and the smoothness of $F$, we can extend $F$ smoothly in a neighborhood of each point of $N \times I$. By an argument of partition of unity (or use \cite{LeeSM} Corollary 6.27), we could find a smooth extension of $F$ defined on $N \times \mathbb{R}$. We still denote this extension by $F$.
  
  Since the collection of embeddings is open in $C^{\infty}(N,M)$ with respect to the Whitney
  topology and $N$ is compact, we can find $\varepsilon > 0$ such that $F_t$
  is an embedding for $t \in (- \varepsilon, 1 + \varepsilon)$. Using
  compactness of $N$ once again and shrinking $\varepsilon$ if necessary, we
  may assume $\widehat{F} : N \times (- \varepsilon, 1 + \varepsilon) \rightarrow
  M \times \mathbb{R}$ is an embedding.
  
  Define a vector field $X''$ on $\widehat{F} (N \times (- \varepsilon, 1 +
  \varepsilon))$ as the tangent vectors of curves $t \mapsto \widehat{F} (x, t), x
  \in N$. Then $X''$ extends $X'$. Since $\widehat{F} (N \times (- \varepsilon, 1
  + \varepsilon))$ is an embedded submanifold with boundary, we can use local
  slice charts (see, for example, \cite{LeeSM} Theorem 5.51) to extend $X''$ in a
  neighborhood of each point of $\widehat{F} (N \times (- \varepsilon, 1 +
  \varepsilon))$. Now it is easy to extend $X'$ smoothly to a vector field $X$
  defined on $M \times I$ by a partition of unity argument. Alternatively, we
  can apply \cite{LeeSM} Lemma 8.6 to $X'$ and $\widehat{F} (N \times I) \subset M
  \times \mathbb{R}$. By multiplication with a smooth function $\rho : M
  \times I \rightarrow I$ that equals to $1$ on $\widehat{F} (N \times I)$ and equals to $0$
  outside a compact neighborhood of $\widehat{F} (N \times I)$, we may further
  assume $X$ to be compactly supported.
\end{proof}

Substituting the above lemma for the tubular neighborhood argument in the proof
of \cite{Hirsch} Theorem 8.1.3, we have:

\begin{corollary} \label{CIET}
  The conclusion of Theorem \ref{IET} remains true if $\partial N \neq \varnothing$.
\end{corollary}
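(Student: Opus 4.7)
The plan is to follow the strategy of \cite{Hirsch}'s proof of Theorem \ref{IET}, replacing its tubular-neighborhood step by a direct appeal to Lemma \ref{VF}, which was designed precisely for this purpose. The only obstruction one encounters when $\partial N \neq \varnothing$ is the difficulty of constructing a tubular neighborhood of the track $\widehat{F}(N \times I)$, since it is a submanifold with corners of $M \times I$. Lemma \ref{VF} bypasses this by producing directly a compactly supported smooth vector field $X$ on $M \times I$ extending the tangent vector field $X'$ of the track, which is exactly the object needed to drive the flow argument.

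First I apply Lemma \ref{VF} to obtain a compactly supported smooth vector field $X$ on $M \times I$ extending $X'$. At a point $\widehat{F}(x, t) = (F(x, t), t)$ the vector $X'$ is the tangent vector of the curve $s \mapsto \widehat{F}(x, s)$ at $s = t$, so its $I$-component is identically $1$. Writing $X = (\widetilde{X}, h)$ with $\widetilde{X}(y, t) \in T_y M$ and $h : M \times I \rightarrow \mathbb{R}$, restriction to the track gives $h \equiv 1$ and $\widetilde{X}(F(x, t), t) = \partial F / \partial t\,(x, t)$ for all $(x, t) \in N \times I$.

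Next I define a time-dependent vector field $G : M \times I \rightarrow TM$ on $M$ by $G(y, t) \assign \widetilde{X}(y, t)$. Since $X$ is compactly supported on $M \times I$, projecting its support to $M$ gives a compact set off which $G$ vanishes, so $G$ is compactly supported. By \cite{Hirsch} Theorem 8.1.1, $G$ generates a compactly supported ambient isotopy $H$ of $M$ with $H_0 = \tmop{id}$ and $\partial H / \partial t\,(y, t) = G(H(y, t), t)$. For each $x \in N$, the curve $\gamma_x(t) \assign F(x, t)$ satisfies $\gamma_x(0) = F_0(x) = x$ and
\[ \gamma_x'(t) = \frac{\partial F}{\partial t}(x, t) = \widetilde{X}(F(x, t), t) = G(\gamma_x(t), t), \]
so uniqueness of integral curves of a time-dependent vector field forces $H(x, t) = F(x, t)$ for all $(x, t) \in N \times I$. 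Thus $H \circ (F_0 \times \tmop{id}_I) = F$, exhibiting $H$ as the desired compactly supported ambient isotopy of $M$ extending $F$.

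The main obstacle, already resolved in Lemma \ref{VF}, is the smooth extension of $X'$ across the corners of $\widehat{F}(N \times I)$; with that in hand, the corollary reduces to the standard ODE-theoretic construction of an isotopy from a time-dependent vector field, exactly as in the boundary-free case.
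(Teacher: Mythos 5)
Your proposal is correct and follows precisely the route the paper intends: the paper's proof consists of the single sentence that one substitutes Lemma \ref{VF} for the tubular-neighborhood step in Hirsch's proof of Theorem 8.1.3, and you have simply spelled out what that substitution looks like (decompose $X = (\widetilde{X}, h)$, take $G := \widetilde{X}$ as a compactly supported time-dependent vector field on $M$, flow it via Hirsch 8.1.1, and invoke uniqueness of integral curves to see the flow extends $F$). The details you supply are accurate; in particular the observation that $h \equiv 1$ on the track, while true, is not actually needed since you pass directly to the time-dependent field $G$ on $M$ rather than flowing a vector field on $M \times I$.
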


\subsection{The Isotopy Uniqueness of Collars}

We shall also use the following theorem of isotopy uniqueness of collars. This is
a well known result, yet it seems a clear proof can not be easily found in the
literature. For the sake of completeness, we prove a version that suit our
purpose.

\begin{theorem} \label{IUC}
  (Isotopy Uniqueness of Collars) Let $M$ be a smooth manifold with boundary.
  Let $C_i : \partial M \times [0, + \infty) \rightarrow M, i = 0, 1$ be
  smooth embeddings (collars). Then there is an isotopy $F : \partial M \times
  [0, + \infty) \times I \rightarrow M$ such that $F_0 = C_0$ and $F_1 = C_1$.
  If in addition $\partial M$ is compact and $M$ is connected, then for any $a
  > 0$, there is an ambient isotopy $G$ of $M$ rel $\partial M$ with $G_1
  \circ C_0 = C_1$ on $\partial M \times [0, a]$.
\end{theorem}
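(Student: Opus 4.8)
The plan is to handle the two assertions separately: first I would produce the isotopy $F$ of embeddings (which uses nothing about compactness), and then, under the additional hypotheses of the second half, upgrade a compact portion of $F$ to an ambient isotopy of $M$ by means of Corollary \ref{CIET}. Throughout, write $X\simeq Y$ to mean that the embeddings $X,Y$ are isotopic.

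\emph{Existence of $F$.} I would rely on three observations. (i) Reparametrizing a collar is harmless: for any smooth embedding $\phi\colon[0,+\infty)\to[0,+\infty)$ with $\phi(0)=0$, the affine path $\phi_s=(1-s)\phi+s\cdot\operatorname{id}$ consists of such embeddings (its derivative stays positive), so $(x,t,s)\mapsto C(x,\phi_s(t))$ exhibits $C\simeq C\circ(\operatorname{id}\times\phi)$; in particular a collar may be squeezed arbitrarily close to $\partial M$ within its isotopy class. (ii) The inward-pointing vectors $\nu_i(x)=(dC_i)_{(x,0)}(\partial_t)$ lie in the open, hence convex, half-spaces of inward tangent vectors along $\partial M$, so $\nu_s=(1-s)\nu_0+s\nu_1$ is inward for every $s$; running the usual collar construction on $\nu_s$ — extend it to an inward vector field on a neighbourhood of $\partial M$ depending smoothly on $s$, flow, and reparametrize the flow time onto the half-line — yields a smooth family of collars $\gamma_s$ with $(d\gamma_s)_{(x,0)}(\partial_t)$ a positive multiple of $\nu_s(x)$, whence $\gamma_0\simeq\gamma_1$. (iii) It remains to see $C_i\simeq\gamma_i$. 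After a reparametrization we may assume $(dC_0)_{(x,0)}(\partial_t)=(d\gamma_0)_{(x,0)}(\partial_t)$, so that $h:=C_0^{-1}\circ\gamma_0$ is defined on a neighbourhood $U$ of $\partial M\times\{0\}$, fixes $\partial M\times\{0\}$ pointwise, and has identity derivative there. Writing $h=(a,b)$, the Alexander-type rescaling $h_s(x,t):=\bigl(a(x,st),\tfrac1s b(x,st)\bigr)$, $s\in(0,1]$, extends smoothly to $s=0$ with $h_0=\operatorname{id}$ by Hadamard's lemma; pre-composing the whole family with a fixed radial squeeze $\rho\colon\partial M\times[0,+\infty)\hookrightarrow U$ (harmless by (i)) makes $h_s\circ\rho$ an honest isotopy of embeddings from $\rho$ to $h\circ\rho$, so $C_0\simeq C_0\circ\rho\simeq\gamma_0\circ\rho\simeq\gamma_0$, and symmetrically $C_1\simeq\gamma_1$. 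Since ``isotopic'' is transitive, concatenating $C_0\simeq\gamma_0\simeq\gamma_1\simeq C_1$ gives the desired $F$, and by construction each $F_s$ fixes $\partial M\times\{0\}$ pointwise.

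\emph{The ambient isotopy.} Assume now $\partial M$ compact and $M$ connected, and fix $a>0$. Set $N:=C_0(\partial M\times[0,2a])\subseteq M$, a compact embedded submanifold with boundary, and let $\widetilde F$ be the isotopy of $N$ determined by $\widetilde F(C_0(x,r),t)=F_t(x,r)$; it is stable on $\partial M$, since every $F_t$ fixes $\partial M\times\{0\}$. Because Corollary \ref{CIET} requires a boundaryless ambient manifold, I would enlarge $M$ by an exterior open collar, $\widehat M:=M\cup_{\partial M}\bigl(\partial M\times(-1,0]\bigr)$, a manifold without boundary containing $M$ as a closed subset and $\partial M$ as a two-sided hypersurface. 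Applying Corollary \ref{CIET} to $\widetilde F$ regarded as an isotopy of $N$ in $\widehat M$ gives a compactly supported ambient isotopy $\widehat G$ of $\widehat M$ with $\widehat G_t(C_0(x,r))=F_t(x,r)$ for all $(x,r)\in\partial M\times[0,2a]$; in particular $\widehat G_1\circ C_0=C_1$ on $\partial M\times[0,a]$. Since $F_t$ fixes $\partial M\times\{0\}$, $\widehat G_t$ fixes $\partial M$ pointwise; since $\widehat G_0=\operatorname{id}$, continuity in $t$ forces $d\widehat G_t$ to preserve the coorientation of $\partial M$, so $\widehat G_t$ carries the inner side of $\partial M$ to itself and hence (here using connectedness of $M$) maps $M$ onto $M$. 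Therefore $G:=\widehat G|_{M\times I}$ is an ambient isotopy of $M$, stable on $\partial M$, with $G_1\circ C_0=C_1$ on $\partial M\times[0,a]$, as claimed.

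\emph{Main obstacle.} The delicate step is the last one of the first part: the rescaling $h_s$ is a priori defined only near $\partial M\times\{0\}$, and the radial squeeze $\rho$ is precisely the device that transports it to a genuine isotopy of all of $\partial M\times[0,+\infty)$; for non-compact $\partial M$ one must moreover take the neighbourhood $U$ and the various reparametrizations position-dependent, a bookkeeping nuisance that disappears in the compact case (which is all that is needed downstream). In the second part the only point requiring care is that the ambient isotopy extending $\widetilde F$ be simultaneously stable on $\partial M$ and preserve $M$ — the former is automatic from $F$ fixing $\partial M\times\{0\}$, the latter from $\widehat G$ starting at the identity and fixing $\partial M$.
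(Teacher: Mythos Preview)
Your proposal is correct, but it takes a different route from the paper for the first assertion. The paper doubles $M$ to obtain a boundaryless manifold $DM$, extends each collar $C_i$ to a tubular neighborhood $T_i\colon\partial M\times\mathbb{R}\to DM$ of the hypersurface $\partial M$ via the Flowout Theorem, and then invokes the isotopy uniqueness of tubular neighborhoods (Hirsch, Theorem~4.5.3) as a black box; the residual linear bundle automorphism is killed by a one-line rescaling, and separability of $DM$ by $\partial M$ ensures the isotopy restricts to the half-line. You instead reprove the relevant special case of that theorem by hand: convexly interpolate the inward vectors, flow, and use the Alexander rescaling $h_s$ to match first-order data. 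This buys independence from Hirsch~4.5.3 at the price of more bookkeeping --- and one small wrinkle you should patch: step~(i) as stated only covers $x$-independent reparametrizations $\phi(t)$, whereas matching $(dC_0)_{(x,0)}(\partial_t)$ with $(d\gamma_0)_{(x,0)}(\partial_t)$ in step~(iii) generally requires an $x$-dependent one; the fix (allow $\phi=\phi(x,t)$ with $\partial_t\phi>0$) is immediate. For the second assertion the two arguments are essentially the same: both enlarge $M$ to a boundaryless manifold (the paper uses the double and applies Corollary~\ref{CIET} to $T_0(\partial M\times[-a,a])$, you use an external open collar and apply it to $C_0(\partial M\times[0,2a])$), and both rely on the isotopy being rel $\partial M\times\{0\}$ so that the ambient extension fixes $\partial M$ pointwise and hence preserves $M$ --- a point you make more explicit than the paper does.
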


\begin{proof}
  Take the double $D M = M \cup_{\partial M} M'$ of $M$, where $M'$ is another
  copy of $M$. Define a smooth manifold structure on $D M$ via any collar of
  $\partial M$. We start with the general case.
  
  We wish to extend $C_i, i = 0, 1$ to tubular neighborhoods of $\partial M$
  in $D M$. For each $C_i, i = 0, 1$, define a vector field $V_i$ on
  $\tmop{Im} C_i$ by the tangent vectors of curves $t \mapsto C_i (x, t), x
  \in \partial M$. Extend $V_i$ to a smooth vector field $\widetilde{V_i}$ on
  $M' \cup \tmop{Im} C_i$ (using, say, \cite{LeeSM} Lemma 8.6). By the Flowout
  Theorem (\cite{LeeSM} Theorem 9.20), for each $i$ there exist a smooth positive function
  $\delta_i : \partial M \rightarrow \mathbb{R}$ such that the flow $\Phi_i$
  of $\widetilde{V_i}$, when restricted to $O_{\delta_i} = \{ (x, t) \in \partial M
  \times \mathbb{R}, \nobracket \norm{t} < \delta_i (x)  \} \nobracket$, is an
  embedding into $D M$. The uniqueness of integral curves implies that
  the domain of $\Phi_i$ includes $\partial M \times [0, + \infty)$, on which it equals to $C_i$.
  
  For any $x \in \partial M,i=0,1$, consider the curve $\Phi_{i, x} : t \mapsto
  \Phi_i (x, t)$. Since $\partial M$ separates $D M$, $\Phi_{i, x}$ maps $(-
  \delta_i (x), 0)$ into either $\tmop{Int} M$ or $\tmop{Int} M'$. In the
  former case, the tangent vector of $\Phi_{i, x}$ at $t = 0$ would be tangent
  to $\partial M$, which is impossible. Hence $\Phi$ maps $O_{\delta_i} \cap
  (\partial M \times (- \infty, 0])$ into $M'$, and $\Phi_i$ is an embedding
  when restricted to $O_{\delta_i}^+ = \{ (x, t) |t \in (- \delta_i (x), +
  \infty)  \}$. An easy rescaling of $\Phi_{i|O_{\delta i}^+}$ produces a
  smooth embedding $T_i : \partial M \times \mathbb{R} \rightarrow D M$ that
  coincides with $C_i$ on $\partial M \times [0, + \infty)$. Note that $T_i$ is
  a tubular neighborhood of $\partial M$ in $D M$.
  
  By the isotopy uniqueness of tubular neighborhoods (c.f. \cite{Hirsch} Theorem
  4.5.3), there is an isotopy $\widetilde{F} : \partial M \times \mathbb{R} \times
  I \rightarrow D M$ rel $\partial M \times \{ 0 \}$ such that $\widetilde{F}_0 =
  T_0, \widetilde{F}_1 (\partial M \times \mathbb{R}) = T_1 (\partial M \times
  \mathbb{R})$ and $T_1^{- 1} \circ \widetilde{F}_1$ is a smooth vector bundle
  isomorphism over $\partial M$. In other words, there is a smooth function
  $\rho : \partial M \rightarrow \mathbb{R}- \{ 0 \}$ such that $\widetilde{F}_1
  (x, t) = T_1 (x, \rho (x) t)$. The separability of $M$ by $\partial M$
  guarantees that $\tmop{Im} \rho \subset (0, + \infty)$. So we may define an
  isotopy $H$ rel $\partial M \times \{ 0 \}$ between $\widetilde{F}_1$ and $T_1$ by
  \[ H : \partial M \times \mathbb{R} \times I \rightarrow D M, H (x, s, t)
     \assign T_1 (x, (s + (1 - s) \rho (x)) t) \]
  Combining $\widetilde{F}$ and $H$ (and smoothing their joint), we obtain an isotopy
  $F$ between $T_0$ and $T_1$ rel $\partial M \times \{ 0 \}$. Now
  separability of $D M$ by $\partial M$ guarantees that the restriction of $F$
  to $\partial M \times [0, + \infty) \times I$ is the desired isotopy.
  
  For the second statement of the theorem, simply apply Corollary \ref{CIET} to the
  embedding $T_0 : \partial M \times [- a, a] \rightarrow M$ and the isotopy $F$.
\end{proof}

\begin{remark}
Note that in the above theorem, the interval $[0,+\infty)$ could be replaced by $[0,A)$ for any $A>0$, in which case $a$ is required to be within $(0,A)$.
\end{remark}

\subsection{The Schoenflies Theorems}

We will be using both the topological and smooth version of Schoenflies Theorem in dimension 2 and 3. In the smooth category we have (c.f. \cite{Hatcher3manifolds}):

\begin{theorem}
Suppose $h: S^{n-1} \rightarrow S^n$ ($n=2,3$) is a smooth embedding. Then there is a self diffeomorphism of $S^n$ taking $h(S^{n-1})$ to the equator.
\end{theorem}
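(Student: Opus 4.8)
The plan is to reduce the statement to the fact that $\Sigma := h(S^{n-1})$ cuts $S^n$ into two smoothly embedded $n$-disks, and then to build the desired self-diffeomorphism by gluing two hemispheres. First I would establish the separation: since both $S^n$ and $S^{n-1}$ are orientable, the normal bundle of $\Sigma$ in $S^n$ is a trivial line bundle, so $\Sigma$ admits a bicollar $\beta:\Sigma\times(-1,1)\hookrightarrow S^n$; and by Alexander duality $\widetilde H_0(S^n\setminus\Sigma)\cong\widetilde H^{\,n-1}(S^{n-1})\cong\mathbb Z$, so $S^n\setminus\Sigma$ has exactly two components $U_1,U_2$. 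Using $\beta$ one sees that $\overline{U_i}=U_i\cup\Sigma$ is a compact connected smooth $n$-manifold with boundary $\Sigma$ and that $S^n=\overline{U_1}\cup_\Sigma\overline{U_2}$.

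The heart of the matter is to show that each $\overline{U_i}$ is diffeomorphic to $D^n$. For $n=2$ this is elementary: $\overline{U_i}$ is a compact connected surface with a single boundary circle, orientable as a codimension-zero submanifold of $S^2$, hence the genus-$g_i$ surface with one boundary component, of Euler characteristic $1-2g_i$; additivity of the Euler characteristic in $S^2=\overline{U_1}\cup_\Sigma\overline{U_2}$ gives $2=(1-2g_1)+(1-2g_2)$, so $g_1=g_2=0$ and $\overline{U_i}\cong D^2$. For $n=3$ the required input is Alexander's theorem, that a smoothly embedded $2$-sphere in $S^3$ bounds a $3$-ball on each side; I would simply quote it, as it is proved in \cite{Hatcher3manifolds}. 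Were a self-contained proof wanted, one would run the classical Morse-theoretic argument: choose $q\in U_1$ and use stereographic projection from $q$ to place $\Sigma\subset\mathbb R^3$ with $U_2$ bounded, isotope $\Sigma$ so the height function restricts to a Morse function with distinct critical values, and induct on the number of critical points — when there is more than one, take a level circle innermost in its horizontal plane, use the disk it bounds there (meeting $\Sigma$ only along that circle) to surger $\Sigma$ into two $2$-spheres of strictly smaller complexity, and rebuild the ball-decomposition of $\Sigma$ from those of the pieces, the base case of one maximum and one minimum being a surface of revolution that visibly bounds balls.

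Granting that each $\overline{U_i}\cong D^n$, I would assemble the diffeomorphism as follows. Write $S^n=D_+\cup_E D_-$, where $E$ is the standard equatorial $S^{n-1}$ and $D_\pm$ are the closed hemispheres, and pick diffeomorphisms $g_1:\overline{U_1}\to D_+$ and $g_2:\overline{U_2}\to D_-$. The restrictions $g_i|_\Sigma:\Sigma\to E$ need not coincide; after possibly replacing $g_1$ by $\sigma\circ g_1$ for a reflection $\sigma$ of $D_+$, the diffeomorphism $\tau:=(g_1|_\Sigma)\circ(g_2|_\Sigma)^{-1}$ of $E\cong S^{n-1}$ is orientation preserving. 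Since $\mathrm{Diff}^+(S^1)$ and $\mathrm{Diff}^+(S^2)$ are connected (the latter by Smale's theorem), $\tau$ is isotopic to $\mathrm{id}_E$; spreading such an isotopy over a collar of $E$ in $D_-$ and damping it to the identity away from $E$ yields $\widetilde\tau\in\mathrm{Diff}(D_-)$ with $\widetilde\tau|_E=\tau$, so after replacing $g_2$ by $\widetilde\tau\circ g_2$ we may assume $g_1|_\Sigma=g_2|_\Sigma=:\phi$. To make the glued map smooth across $\Sigma$, I would fix a bicollar $\nu$ of $\Sigma$ in $S^n$ and a bicollar $\mu$ of $E$ in $S^n$, and apply Theorem \ref{IUC} inside the compact connected manifolds $D_+$ and $D_-$ (whose boundary $E$ is compact) to further modify $g_1,g_2$ by ambient isotopies of $D_\pm$ fixing $E$ — hence not disturbing $\phi$ — so that each $g_i$ carries the relevant half of $\nu$ onto the relevant half of $\mu$, respecting the collar coordinate. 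Then $\Phi:S^n\to S^n$, equal to $g_1$ on $\overline{U_1}$ and $g_2$ on $\overline{U_2}$, is a homeomorphism that in a neighborhood of $\Sigma$ agrees with the smooth map $\mu\circ(\phi\times\mathrm{id})\circ\nu^{-1}$, hence is a diffeomorphism, and $\Phi(\Sigma)=E$, as desired.

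I expect the main obstacle to be the $n=3$ case of the disk claim, i.e.\ Alexander's theorem. The separation, the surface classification for $n=2$, and the gluing (powered by the collar uniqueness of Theorem \ref{IUC}) are all essentially formal, whereas the proof that a smooth $2$-sphere in $S^3$ bounds a ball is genuinely delicate — the innermost-disk/Morse induction, and especially the bookkeeping of sides and the interplay between surgering $\Sigma$ and isotoping it, must be handled with care in the smooth category. If that is judged outside the scope of the paper, quoting \cite{Hatcher3manifolds} for this single fact is the appropriate move.
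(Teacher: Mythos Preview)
The paper does not actually prove this theorem: it is stated with a reference (``c.f.\ \cite{Hatcher3manifolds}'') and used as a black box. Your proposal is therefore strictly more than what the paper provides, and it is correct.

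Your outline is the standard one and matches the argument in the cited source: separate $S^n$ by the bicollared hypersurface $\Sigma$, identify each closure as a disk (Euler-characteristic count for $n=2$, Alexander's theorem for $n=3$), then glue diffeomorphisms to the two hemispheres. You correctly isolate Alexander's theorem as the only genuinely hard input and propose to quote it from \cite{Hatcher3manifolds}, which is exactly the paper's stance. The gluing step is sound; the one point worth flagging is that for $n=3$ the connectedness of $\mathrm{Diff}^+(S^2)$ you invoke is itself Smale's theorem \cite{Smale2sphere}, a result the paper already relies on elsewhere (e.g.\ in the proof of Lemma~\ref{Compress} and in Section~2.8), so this is not an extra import. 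Your use of Theorem~\ref{IUC} to straighten collars and make the glued map smooth across $\Sigma$ is a clean way to handle the smoothness issue.
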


In the topological case, assuming some extra regularity, the Schoenflies Theorem holds for all dimensions (c.f.\cite{BrownSchoenflies}):

\begin{theorem}
Suppose $h: S^{n-1} \times I \rightarrow S^n$ is an embedding. Then there is a self homeomorphism of $S^n$ taking $h(S^{n-1} \times \{1/2\})$ to the equator.
\end{theorem}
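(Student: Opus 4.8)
The equator $S^{n-1}\subset S^n$ is bicollared and bounds the two standard hemispheres $D^n_{+},D^n_{-}$, and every self-homeomorphism of $S^{n-1}$ extends over $D^n$ by the radial formula $x\mapsto |x|\,\psi(x/|x|)$. Hence the theorem is equivalent to the statement that the closure of each of the two complementary regions of $\Sigma:=h(S^{n-1}\times\{1/2\})$ in $S^n$ is homeomorphic to the closed $n$-ball. Indeed, given homeomorphisms $\phi_{A}\colon A\to D^n_{+}$ and $\phi_{B}\colon B\to D^n_{-}$ of the two closures (which automatically carry $\Sigma$ onto $S^{n-1}$, by invariance of the boundary), one precomposes $\phi_{B}$ with the radial extension over $D^n_{-}$ of the mismatch $(\phi_{B}|_{\Sigma})\circ(\phi_{A}|_{\Sigma})^{-1}$ so that $\phi_{A}$ and $\phi_{B}$ agree on $\Sigma$, and glues them to a self-homeomorphism of $S^n$ carrying $\Sigma$ to the equator. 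So I would first record this reduction, and then prove the displayed equivalent statement.

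For that statement, note first that the restriction of $h$ to a neighbourhood of $S^{n-1}\times\{1/2\}$ is a bicollar of $\Sigma$, so $\Sigma$ is a bicollared embedded $(n-1)$-sphere. By Alexander duality $\widetilde H_{0}(S^n\setminus\Sigma)\cong\widetilde H^{\,n-1}(\Sigma)\cong\widetilde H^{\,n-1}(S^{n-1})\cong\mathbb Z$, so $S^n\setminus\Sigma$ has exactly two components $U$ and $V$; the bicollar shows each has frontier exactly $\Sigma$, that the closures $A=\overline U$ and $B=\overline V$ are compact topological $n$-manifolds with $\partial A=\partial B=\Sigma$, and that $h$ supplies both an interior and an exterior collar of $\Sigma$ from each side. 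It remains to prove $A\cong D^n$ (and, symmetrically, $B\cong D^n$). Here I would follow Brown: the crucial step is that the complementary closure $B$ is a \emph{cellular} subset of $S^n$, i.e.\ an intersection of a decreasing sequence of closed $n$-balls. Granting this, collapsing $B$ to a point does not change $S^n$ up to homeomorphism, and combining $S^n/B\cong S^n$ with the collar of $\partial A$ one extracts a homeomorphism $A\cong D^n$: a pushed-in collar of $\partial A$ becomes, in $S^n/B$, a standard ball neighbourhood of the collapse point, whose complement is a standard $D^n$, and the pushed-in copy of $A$ is recovered as $A$ itself by collar absorption $M\cup_{\partial M}(\partial M\times I)\cong M$.

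The genuine difficulty — and the step I expect to be the main obstacle — is the cellularity of $B$ (equivalently, that $S^n/B\cong S^n$). This is essentially as hard as the theorem itself and cannot be bootstrapped from a one-sided collar; what makes it go through is the use of the full \emph{bi}collar $S^{n-1}\times(1/2-\varepsilon,\,1/2+\varepsilon)\hookrightarrow S^n$, which allows one to shrink $B$ toward its boundary by a compact-exhaustion / shrinking-type construction and thereby exhibit the required nested balls; I would reproduce this construction from \cite{BrownSchoenflies}. (An alternative route is Mazur's infinite-swindle argument, chaining copies of $A$ and $B$ glued along collars and regrouping the resulting telescope in two ways.) All the remaining ingredients — Alexander duality, invariance of the boundary, collar absorption, and the radial extension of boundary homeomorphisms — are standard, so essentially the entire weight of the theorem rests on this single cellularity/shrinking step.
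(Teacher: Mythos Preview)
The paper does not give its own proof of this statement; it simply cites \cite{BrownSchoenflies} and moves on. Your proposal is a faithful outline of Brown's argument (reduction to showing each complementary closure is a ball, Alexander duality for the two components, and the cellularity/shrinking step as the real content), so it is correct and is precisely the approach the paper defers to.
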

 
We will be dealing with topological or smooth hyper-spheres embedded in $D^2$ or $D^2 \times
S^1$. We could embed the ambient space smoothly as a submanifold of
the Euclidean space with the same dimension. If the conditions of the Schoenflies Theorem in the respective category are satisfied(in the smooth case this is automatic), then we can talk about the disk (ball) bounded within the sphere, and
it is independent of the embedding.

\subsection{Compressing 3-balls by Isotopy, and Rescaling Spheres Near
Their Equators}

The next two technical lemmas will be useful in the third step of our proof.

\begin{lemma} \label{Compress}
  Let $B^3$ be the unit 3-ball and $D \subset B^3$ be a smoothly, neatly
  embedded 2-disk. Let $\widetilde{D}$ be one of the two disks bounded by
  $\partial D$ in $\partial B^3$. Suppose $U$ is a tubular neighborhood of
  $\partial D$ in $\partial B^3$. Then there is a submanifold $B$ of $B^3$
  such that\\
  
  (i) $B$ is diffeomorphic to a ball
  
  (ii) $B \cap D = \varnothing$
  
  (iii) $\widetilde{D} - U \subset B$ 
  \begin{flushleft}
  and a smooth isotopy $H : B^3 \times I \rightarrow B^3$ such that $H_0 =
  \tmop{id}, H_1 (B^3) = B$ and $H$ is stable one a neighborhood of $\widetilde{D}
  - U$ in $B^3$.
  \end{flushleft}
\end{lemma}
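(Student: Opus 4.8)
The plan is to bring $D$ into a standard position by a self-diffeomorphism of $B^3$, and then to produce $B$ and $H$ explicitly as a rescaled flow of a single vector field. First I would show that, after applying a suitable self-diffeomorphism of $B^3$, one may assume $B^3$ is the unit ball, $D=\{(x,y,z)\in B^3 : z=0\}$, and $\widetilde D=\{z\ge 0\}\cap S^2$; since the conclusion of the lemma is preserved under self-diffeomorphisms of $B^3$ (transport $B$, $H$ and $U$), it then suffices to treat this model. This reduction is where the Schoenflies theorems enter. The two-dimensional one lets one first move $\partial D$ onto the equator of $S^2$ and extend the resulting diffeomorphism over a collar of $\partial B^3$; after that, $D\cup(\{z\ge 0\}\cap S^2)$, with the corner along $\partial D$ smoothed, is a smoothly embedded $2$-sphere in $B^3\subset S^3$, so by the smooth three-dimensional Schoenflies theorem the two components into which $D$ separates $B^3$ have closures (corners smoothed) that are smooth $3$-balls. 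Using that any two embedded disks in $S^2$ are interchanged by a diffeomorphism of $B^3$, one chooses diffeomorphisms of these two balls onto $\{z\ge 0\}\cap B^3$ and $\{z\le 0\}\cap B^3$ carrying $D$ to the equatorial disk, adjusts one of them by an isotopy so that the two agree on $D$, and glues them. Finally, since $\widetilde D-U$ is a compact subset of the interior of $\widetilde D$ and $H$ is only required to be stable near it, I may shrink $U$ and assume $\widetilde D-U=\{z\ge\beta\}\cap S^2$ for some small $\beta\in(0,1)$.

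In this model I would fix $c\in(0,1)$ and a smooth $\rho:\mathbb{R}\to[0,1]$ with $\rho\equiv 0$ on $[\beta/2,\infty)$ and $\rho>0$ on $(-\infty,\beta/2)$, and set $X(x,y,z):=\rho(z)\,(-x,-y,c-z)$ on $B^3$. Along $\partial B^3$ the outward normal at $(x,y,z)$ is $(x,y,z)$ itself, and a direct computation gives $\langle X,(x,y,z)\rangle=\rho(z)(cz-1)\le 0$, with equality only where $\rho(z)=0$; hence $X$ is inward-pointing or zero along $\partial B^3$, its forward flow $\Phi_t$ is defined for all $t\ge 0$ with $\Phi_t(B^3)\subseteq B^3$, and each $\Phi_t$ is injective (uniqueness of integral curves) with invertible differential, hence, $B^3$ being compact, a diffeomorphism onto its image. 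Moreover $X\equiv 0$ on the open set $W:=\{z>\beta/2\}\cap B^3$, which contains $\widetilde D-U$, so $\Phi_t$ fixes $W$ pointwise. Along a flow line $\dot z=\rho(z)(c-z)$; with $\rho_1:=\min\{\rho(z):-1\le z\le 0\}>0$ one has $\dot z\ge\rho_1 c$ whenever $z\in[-1,0]$ and $\dot z>0$ whenever $z=0$, so every flow line starting at height $\le 0$ reaches positive height by time $1/(\rho_1 c)$ and no flow line ever crosses height $0$ downward; thus for any fixed $T>1/(\rho_1 c)$ we have $\Phi_T(B^3)\subset\{z>0\}$. Now put $B:=\Phi_T(B^3)$ and $H(p,t):=\Phi_{tT}(p)$. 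Then $B$ is diffeomorphic to a ball (it is the embedded image $\Phi_T(B^3)$), $B\cap D=\varnothing$ since $B\subset\{z>0\}$ while $D=\{z=0\}\cap B^3$, and $\widetilde D-U\subset W=\Phi_T(W)\subset\Phi_T(B^3)=B$; the track $(p,t)\mapsto(\Phi_{tT}(p),t)$ is an injective immersion of the compact manifold-with-corners $B^3\times I$, hence an embedding, so $H$ is an isotopy with $H_0=\mathrm{id}$, $H_1(B^3)=B$, and $H$ stable on the open neighborhood $W$ of the compact set $\widetilde D-U$ --- exactly the required conclusion.

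I expect the first step --- that a neatly embedded $2$-disk in $B^3$ is standard up to a self-diffeomorphism of $B^3$ --- to be the main obstacle. Once the two Schoenflies theorems recalled above are granted, the remaining work there is bookkeeping, but one has to be careful when smoothing the corner of $D\cup\widetilde D$ along $\partial D$ and when arranging that the two half-ball diffeomorphisms agree on $D$ before gluing (which is possible because an orientation-preserving self-diffeomorphism of a disk is isotopic to the identity). Everything after the reduction is short and essentially formal.
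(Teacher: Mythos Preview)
Your argument is correct, but it does substantially more work in the reduction than the paper does. The paper only straightens $\partial D$: an ambient isotopy of $S^2$ carries $\partial D$ onto the equator, and Smale's theorem extends the end map to a self-diffeomorphism of $B^3$. After that, $D$ is still an arbitrary neat disk with boundary the equator---the paper never standardizes $D$ itself. The point is that this already suffices: since $\partial D$ is the equator, $D$ meets $\partial B^3$ only along the equator, so a sufficiently thin radial collar $C$ of the cap $\widetilde D\setminus U$ is disjoint from $D$; one then compresses $B^3$ into $C$ by an isotopy stable near that cap. This avoids the three-dimensional Schoenflies theorem, the corner-smoothing of $D\cup\widetilde D$, and the gluing of two half-ball diffeomorphisms along $D$---precisely the steps you flagged as the main obstacle. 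Your explicit flow $\Phi_t$ of $X(x,y,z)=\rho(z)(-x,-y,c-z)$ is a clean way to realize the compression once $D$ is the equatorial disk, and the inward-pointing/monotonicity checks are fine (you should choose $c>\beta/2$ so that $\dot z>0$ throughout $\{z<\beta/2\}$, and the time bound $T>1/(\rho_1 c)$ only gives $z\ge 0$, but any strictly larger $T$ gives $z>0$ since $\dot z>0$ at $z=0$). In short: your route works, but the paper's observation that only $\partial D$ needs to be moved is the real shortcut.
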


\begin{proof}
  We claim that the problem can be reduced to the case where $\partial D =
  \partial B^3 \cap (\mathbb{R}^2 \times \{ 0 \})$. Since any two smooth knots
  in $S^2$ are smoothly ambient isotopic, there is an ambient isotopy $G$ of
  $S^2$ such that $G_1 (\partial D) = S^2 \cap (\mathbb{R}^2 \times \{ 0 \})$.
  The diffeomorphism $G_1$ of $S^2$ can be extended to a self diffeomorphism
  of $B^3$ (c.f. \cite{Smale2sphere}). This proves the
  claim.
  
  We could further assume $\widetilde{D}$ is the upper hemisphere. There exist
  $\varepsilon > 0$ such that $\partial B^3 \cap (\mathbb{R}^2 \times (-
  \varepsilon, \varepsilon)) \subset U$. On the other hand, we could choose a
  radial collar $C$ of $\partial B^3 \cap \left( \mathbb{R}^2 \times \left( -
  \infty, - \frac{\varepsilon}{2} \right) \right)$ that is disjoint from $D$.
  Choose an isotopy $H : B^3 \times I \rightarrow B^3$ such that $H_0 =
  \tmop{id}, H_1 (B^3) \subset C$ and $H$ is stable on a neighborhood of
  $\partial B^3 \cap (\mathbb{R}^2 \times (- \infty, \varepsilon])$.
  Intuitively, we are compressing $B^3$ by pushing it from the top. Now $B =
  H_1 (B^3)$ has the desired properties.
\end{proof}

\begin{lemma} \label{rescale}
  Let $X : S^{n - 1} \rightarrow (0, + \infty)$ be a smooth function. Define a
  embedding $N \of S^{n - 1} \times (- 1, 1) \rightarrow S^n$ by
  \[N (z, t) = \left(
     \sqrt{1 - t^2} \frac{z}{\norm{z}}, t \right), z \in S^{n - 1}, t \in (- 1,
     1) \]
  where $S^{n - 1}$ (resp. $S^n$) are regarded as subspaces of $\mathbb{R}^n$
  (resp. $\mathbb{R}^{n + 1}$) and $\norm{\cdot}$ is the Euclidean norm. Then
  there is a diffeomorphism $\varphi : S^n \rightarrow S^n$ such that $\varphi
  (D^n_+) = D^n_+, \varphi (\tmop{Im} N) = \tmop{Im} N$ and $\varphi (N (z,
  t)) = N (z, X (z) t)$ for $(z, t)$ in a neighborhood of $S^{n - 1} \times \{
  0 \}$. Here $D_+^n$ denotes the upper hemisphere in $S^n$.
\end{lemma}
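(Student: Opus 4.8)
The plan is to realize $\varphi$ as the time-$1$ map of the flow of a vector field on $S^n$ which, read in the coordinates supplied by $N$, merely rescales the $t$-variable and has been cut off to vanish near the two poles. First I would record that $\operatorname{Im} N = S^n \setminus \{p_+, p_-\}$ with $p_\pm = (0,\dots,0,\pm 1)$: writing $w = (w', w_{n+1}) \in S^n$, if $w_{n+1} \in (-1,1)$ then $\norm{w'} = \sqrt{1 - w_{n+1}^2} > 0$ and $w = N(w'/\norm{w'}, w_{n+1})$, while the two poles are visibly omitted. Hence $N$ is a diffeomorphism onto the open set $S^n \setminus \{p_\pm\}$, the pair $(z,t)$ serves as a coordinate there, and along $\operatorname{Im} N$ the last Euclidean coordinate is exactly $t$; in particular $N$ carries $S^{n-1}\times\{0\}$ onto the equator $S^{n-1}\times\{0\}\subset S^n$, and $D^n_+\cap\operatorname{Im}N = N(S^{n-1}\times[0,1))$.

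Next, by compactness of $S^{n-1}$ fix $M \ge 1$ with $X(z) \le M$ for all $z$, choose $0 < \eta < \eta' < 1$, and pick a smooth $\beta:(-1,1)\to[0,1]$ with $\beta\equiv 1$ on $[-\eta,\eta]$ and $\beta\equiv 0$ on $(-1,1)\setminus(-\eta',\eta')$. Let $V$ be the vector field on $\operatorname{Im}N$ which in the coordinates $(z,t)$ equals $\beta(t)\,(\log X(z))\,t\,\partial_t$; this is smooth, since $X > 0$ makes $\log X$ smooth and $\partial_t$ is the (smooth) velocity field of the curves $t\mapsto N(z,t)$, as in Lemma \ref{VF}. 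Because $\beta$ vanishes for $t$ near $\pm 1$, $V$ vanishes near $p_\pm$, so it extends by $0$ to a smooth vector field on the compact manifold $S^n$, which is therefore complete; let $\Phi$ be its flow and put $\varphi \assign \Phi(\cdot,1)$, a diffeomorphism of $S^n$.

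It remains to check the three required properties. The field $V$ vanishes at $p_\pm$ and along $\{t=0\}$ (killed by the factor $t$), and on $\operatorname{Im}N$ it is everywhere a scalar multiple of $\partial_t$, hence tangent to each level set $\{t = \mathrm{const}\}$; thus $\Phi$ (and with it $\varphi$ and $\varphi^{-1}$) fixes $p_\pm$, fixes the equator pointwise, and preserves each of $\{t>0\}$ and $\{t<0\}$, whence $\varphi(\operatorname{Im}N) = \operatorname{Im}N$ and $\varphi(D^n_+) = D^n_+$. For the local rescaling, take $|t_0| < \eta/M$: as long as the integral curve $s\mapsto\Phi(N(z,t_0),s)$ stays in $\{|t|\le\eta\}$ it satisfies $\dot t = (\log X(z))\,t$, so $t(s) = t_0\,X(z)^s$ and $|t(s)| \le |t_0|\max(1,X(z)) \le |t_0|\,M < \eta$ for all $s\in[0,1]$; hence the linear equation is valid on the whole interval, giving $t(1) = X(z)\,t_0$ with $z$ unchanged, i.e.\ $\varphi(N(z,t)) = N(z, X(z)t)$ throughout the neighborhood $\{|t| < \eta/M\}$ of $S^{n-1}\times\{0\}$ (note $|X(z)t| < \eta < 1$ there, so the right side is defined).

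I do not anticipate any real obstacle; the two points that need attention are the smooth extension of $V$ across the poles $p_\pm$, which is precisely what the cutoff $\beta$ is for, and the use of the uniform bound $X \le M$, which keeps the relevant integral curves inside the region where $\beta\equiv 1$ and thereby guarantees that the prescribed rescaling is attained on an honest neighborhood of the equator rather than only for each individual $z$.
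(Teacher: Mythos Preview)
Your argument is correct and takes a genuinely different route from the paper's. The paper builds $\varphi$ by hand: using two auxiliary functions $\alpha,\beta:I\to I$ it writes down explicit self-diffeomorphisms $\varphi_\pm$ of $S^{n-1}\times[0,1)$ and $S^{n-1}\times(-1,0]$ that equal $(z,t)\mapsto(z,X(z)t)$ near $t=0$ and the identity near $|t|=1$, checks monotonicity of the resulting formula to see they are diffeomorphisms, glues across the equator, and extends by the identity at the poles. You instead realize $\varphi$ dynamically as the time-$1$ map of the flow of the cutoff field $\beta(t)\,(\log X(z))\,t\,\partial_t$. The paper's approach is entirely explicit but needs a monotonicity verification; yours gets the diffeomorphism property for free from the flow, at the cost of the short continuation argument confining integral curves to $\{|t|\le\eta\}$. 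Both are elementary; yours is arguably cleaner in that it avoids ad hoc formulas and the separate treatment of the two hemispheres.

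One slip to fix: the clause ``a scalar multiple of $\partial_t$, hence tangent to each level set $\{t=\mathrm{const}\}$'' is backwards ($\partial_t$ is transversal to those level sets). The conclusion you want, that $\varphi$ preserves $\{t>0\}$ and $\{t<0\}$, already follows from what you observed just before: $V$ vanishes along $\{t=0\}$ and at $p_\pm$, so the flow fixes the equator and the poles, and $\{t>0\}$, $\{t<0\}$ are the connected components of the complement.
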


\begin{proof}
  Define $M \assign \underset{z \in S^{n - 1}}{\max} X (z)$. Choose a smooth
  function $\alpha : I \rightarrow I$ with the following properties:
  
  (i) $\alpha (t) = M t$ for $t \in \left[ 0, \frac{1}{3} \right)$
  
  (ii) $\alpha (t) = t$ for $t \in \left( \frac{2}{3}, 1 \right]$
  
  (iii) $\alpha$ is strictly increasing
  
  Choose another smooth function $\beta : I \rightarrow I$ such that
  
  (i) $\beta (t) = 1$ for $t \in \left[ 0, \frac{1}{3} \right)$
  
  (ii) $\beta (t) = 0$ for $t \in \left. (\nobracket \frac{2}{3}, 1 \right]$
  
  (iii) $\beta$ is non-increasing
  
  It is not hard to check, using monotonicity, that the function $\varphi_+ : S^{n - 1} \times [0,
  1) \rightarrow S^{n - 1} \times [0, 1)$ defined by
  \[ \varphi_+ (z, t) = \left( z,  \left( \frac{\beta (t) X (z)}{M}
     + 1 - \beta (t) \right) \alpha (t) \right) \]
  is a diffeomorphism. By our choice, $\varphi_+ (z, t) =
  (z, X (z) t)$ for $t \in \left[ 0, \frac{1}{3} \right)$ and $\varphi_+ (z,
  t) = (z, t)$ for $t \in \left( \frac{2}{3}, 1 \right)$.
  
  Similarly,we can define a diffeomorphism $\varphi_- : S^{n - 1} \times (-
  1, 0] \rightarrow S^{n - 1} \times (- 1, 0]$ with $\varphi_- (z, t) = (z, X
  (z) t)$ for $t \in \left( - \frac{1}{3}, 0 \right]$ and $\varphi_- (z, t) =
  (z, t)$ for $t \in \left( - 1, - \frac{2}{3} \right)$.
  
  Now if we patch up $\varphi_+$ and $\varphi_-$ while identifying $S^{n - 1}
  \times (- 1, 1)$ with its image under $N$, we obtain a self diffeomorphism
  of $S^n - \{ \pm P \}$, where $\pm P$ stand for the north and south poles.
  Extending this diffeomorphism to the poles by identity, we get a bijection
  $\varphi : S^n \rightarrow S^n$. The properties of $\varphi_+, \varphi_-$
  guarantees that $\varphi$ is a diffeomorphism with the desired properties.
\end{proof}

\subsection{Partial Extensions of Partial Collars}

The next lemma says a ``partial collar'' of the boundary of a ball can be
partially extended to an actual collar. This will be used in the fourth step.

\begin{lemma} \label{partialextension}
  Let $B^n$ be the unit ball in $\mathbb{R}^{n+1}$. For any interval $J$, define
  $E_J \assign \partial B^n \cap (\mathbb{R}^n \times J)$. Given $d > 0$ and a
  smooth embedding $C : E_{[- 1, 0]} \times [0, d] \rightarrow B^n$ such that
  $c (x, 0) = x$ for any $x \in E_{[- 1, 0]}$ and $C (E_{[- 1, 0]} \times (0,
  d]) \subset \tmop{Int} B^n$. Then for any $\varepsilon > 0$, there is
  $\delta \in (0, d]$ and a collar $C' \of \partial B^n \times [0, \delta]
  \rightarrow B^n$ that coincide with $C$ on $E_{[- 1, - \varepsilon]} \times
  [0, \delta]$.
\end{lemma}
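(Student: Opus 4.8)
The plan is to realize $C'$ as the flow of an inward‑pointing vector field on $B^n$ that, near $E_{[-1,-\varepsilon]}$, reproduces the integral curves $s\mapsto C(x,s)$ already supplied by the partial collar. Write $S\assign\partial B^n$ and let $h$ be the last Euclidean coordinate, so that $E_{[-1,0]}$ is the closed lower hemisphere of $S$, with interior $E_{[-1,0)}$ in $S$. Fix $\varepsilon'$ with $0<\varepsilon'<\varepsilon$; then $E_{[-1,-\varepsilon]}\subset\operatorname{int}_S E_{[-1,-\varepsilon']}$ and $E_{[-1,-\varepsilon']}\subset E_{[-1,0)}$.

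First I would extract the vector field carried by $C$. For $x\in E_{[-1,0]}$ put $V(x)\assign\partial_t|_{t=0}C(x,t)$. If $h(x)<0$ then $x$ is an interior point of the codimension‑zero submanifold $E_{[-1,0]}$ of $S$, so $T_xE_{[-1,0]}=T_xS$; since $C$ is an embedding, $dC_{(x,0)}$ is injective, which forces $V(x)\notin T_xS$, and as $C(E_{[-1,0]}\times(0,d])\subset\operatorname{Int}B^n$ the vector $V(x)$ points into $B^n$. Thus $V$ is smooth and inward‑pointing along $E_{[-1,0)}$, in particular along the compact set $E_{[-1,-\varepsilon']}$. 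Now consider the compact codimension‑zero submanifold with corners $\mathcal N\assign C\bigl(E_{[-1,-\varepsilon']}\times[0,d]\bigr)$; since $C(x,t)\in\operatorname{Int}B^n$ for $t>0$, it meets $S$ exactly in $E_{[-1,-\varepsilon']}$. Set $X'\assign C_*(\partial_t)$ on $\mathcal N$, so that each curve $s\mapsto C(x,s)$ is an integral curve of $X'$ and $X'|_S=V|_{E_{[-1,-\varepsilon']}}$. Using local (corner) slice charts for $\mathcal N$ and a partition of unity, exactly as in the proof of Lemma \ref{VF} (or \cite{LeeSM} Lemma 8.6 after a slight thickening of $\mathcal N$), extend $X'$ to a smooth vector field $X$ on an open neighbourhood $\Omega$ of $\mathcal N$ in $B^n$.

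Next I would build a global inward‑pointing field equal to $X$ near the relevant curves. Fix any $\delta_1\in(0,d)$ and let $K\assign C\bigl(E_{[-1,-\varepsilon]}\times[0,\delta_1]\bigr)$, compact in $\mathcal N$. Every point of $K$ has an open $B^n$‑neighbourhood contained in $\mathcal N$: for $C(x,t)$ with $t>0$ because the point is interior to $\mathcal N$ (as $x\in\operatorname{int}_S E_{[-1,-\varepsilon']}$ and $0<t<d$), and for $C(x,0)=x\in S$ because $C$ restricted to a product neighbourhood $(U\cap S)\times[0,d)$ of $(x,0)$ is an embedding onto a neighbourhood of $x$ in $B^n$ by invariance of domain. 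Hence there are open sets $K\subset\mathcal O'\subset\overline{\mathcal O'}\subset\mathcal O\subset\mathcal N\subset\Omega$. Choose smooth $\mu\of B^n\to[0,1]$ with $\mu\equiv1$ on $\overline{\mathcal O'}$ and $\operatorname{supp}\mu\subset\mathcal O$, let $V_0$ be any inward‑pointing vector field along $S$ and $\widetilde V_0$ a smooth extension of it over $B^n$, and set $W\assign\mu X+(1-\mu)\widetilde V_0$ (reading $\mu X$ as $0$ off $\operatorname{supp}\mu$). Then $W$ is a smooth vector field on $B^n$, and $W|_S$ is inward‑pointing: at $p\in S$ with $\mu(p)>0$ we have $p\in\operatorname{supp}\mu\subset\mathcal N$, hence $p\in E_{[-1,-\varepsilon']}\subset E_{[-1,0)}$, where $X(p)=V(p)$ is inward, while $\widetilde V_0(p)=V_0(p)$ is inward for all $p\in S$, and the inward‑pointing vectors at $p$ form a convex set. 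By the Flowout Theorem (\cite{LeeSM} Theorem 9.20) and compactness of $S$, the flow of $W$ gives a collar $C''\of S\times[0,\tau_0)\to B^n$ for some $\tau_0>0$, with $C''(x,\cdot)$ the $W$‑integral curve through $x$.

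Finally I would verify the matching. Put $\delta\assign\tfrac12\min(\delta_1,\tau_0)$ and $C'\assign C''|_{S\times[0,\delta]}$, a collar of $S$. Fix $x\in E_{[-1,-\varepsilon]}$. For $s\in[0,\delta]\subset[0,\delta_1]$ the point $C(x,s)$ lies in $K\subset\mathcal O'$, where $W=X$, so $\tfrac{d}{ds}C(x,s)=X'(C(x,s))=X(C(x,s))=W(C(x,s))$; thus $s\mapsto C(x,s)$ is on $[0,\delta]$ an integral curve of $W$ starting at $x$, and by uniqueness of integral curves it agrees with $C''(x,\cdot)$ there, i.e. $C'(x,s)=C(x,s)$ for all $x\in E_{[-1,-\varepsilon]}$, $s\in[0,\delta]$, as required. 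The crux — and the reason both for the buffer $\varepsilon'<\varepsilon$ and for passing to a small $\delta$ — is precisely this step: arranging merely that $W|_S$ equals $V$ near $E_{[-1,-\varepsilon]}$ would only match the collars to first order, so $W$ must be forced to coincide with the whole extended field $X$ on an honest $B^n$‑neighbourhood of the compact arc family $K$, which is possible exactly because $E_{[-1,-\varepsilon]}$ stays away from the edge $E_{\{0\}}$ of the region on which $C$ is defined; the one remaining technicality is the smooth extension of $X'$ off the corner manifold $\mathcal N$.
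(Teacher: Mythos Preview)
Your proof is correct and follows the same overall strategy as the paper: build a vector field equal to $C_*(\partial_t)$ on a neighbourhood of $C(E_{[-1,-\varepsilon]}\times[0,\delta])$, blend it with a generic inward-pointing field via a cutoff, and apply the Boundary Flowout Theorem, then invoke uniqueness of integral curves.

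The paper's implementation is somewhat cleaner in one respect. Rather than restricting to the corner manifold $\mathcal N=C(E_{[-1,-\varepsilon']}\times[0,d])$ and then extending $X'$ off it (your acknowledged ``remaining technicality''), the paper observes directly that $C(E_{[-1,0)}\times[0,d))$ is \emph{open} in $B^n$ by invariance of domain. The pushed-forward field $V=C_*(\partial_t)$ is then automatically a smooth vector field on an open subset of $B^n$, so no extension argument is needed at all: one simply takes the cutoff $\rho$ to be supported inside this open set with $\rho\equiv 1$ on $C(E_{[-1,-\varepsilon]}\times[0,d/2])$, and sets $V'=\rho V+(1-\rho)V_0$ where $V_0$ comes from a fixed reference collar. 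This avoids both the buffer $\varepsilon'$ and the corner-extension step; you do eventually use invariance of domain anyway (to show $K$ has an open $B^n$-neighbourhood in $\mathcal N$), so you may as well use it at the outset and skip the detour through $\mathcal N$.
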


\begin{proof}
  Choose a smooth collar $C_0:S^{n-1} \times [0,1) \rightarrow B^n$. Let $V$ (resp. $V_0$) be the vector field on $\tmop{Im} C$ (resp. $\tmop{Im} C_0$)
  defined as the tangent vectors of $t \mapsto C (x, t)$ (resp. $t \mapsto C_0 (x, t)$) for $x \in E_{[- 1, 0]}$ (resp. $x \in S^{n-1}$).
  With the help of the Invariance of Domain, it is not hard to see that $C
  (E_{[- 1, 0)} \times [0, d))$ is open in $B^n$. Let $\rho : B^n \rightarrow I$ be a smooth function such that:
  
  (i) $\rho (x) = 1$ if $x \in C (E_{[- 1, - \varepsilon]} \times [0,
  \frac{d}{2}])$
  
  (ii) $\tmop{supp} \rho \subset C (E_{[- 1, 0)} \times [0, d))$
  
  Define a smooth vector field $V'$ on $\tmop{Im} C_0$ by $V' \assign \rho V + (1 - \rho)
  V_0$.
  Both $V$ and $V_0$ are inward pointing on
  $\partial B^n$ (wherever defined). Hence the same is true for $V'$. The desired collar
  exists by the Boundary Flowout Theorem (\cite{LeeSM} Theorem 9.24).
\end{proof}

\subsection{Review of Fiber Derivative}

In order to perform the fifth step, let us briefly review the notion of fiber
derivative. Let $E_0, E_1$ be $C^{\infty}$ vector bundles over a smooth
manifold $M$ and $g : E_0 \rightarrow E_1$ be a smooth map that is identical
on the zero section ($g$ is not necessarily fiber preserving). Consider the
long exact sequences of vector bundles:
\[ 0 \rightarrow E_i \overset{\alpha_i}{\rightarrow} T E_{i|M}
   \overset{\beta_i}{\rightarrow} T M \rightarrow 0, i = 0, 1 \]
where $\alpha_i$ is defined by identifying the vector space $(E_i)_x$ with
$T_x (E_i)_x$ for each $x \in M$ while $\beta_i$ is induced by the projection
$p_i : E_i \rightarrow M$. For each $i$, the sequence splits by the map $T : T M \rightarrow
T E_{i|M}$ induced by the zero section. Hence we can canonically identify $T
E_{i|M}$ with $E_i \oplus T M$.

The {\tmem{fiber derivative}} of $g$ is the vector bundle homomorphism $\Phi :
E_0 \rightarrow E_1$ defined as the component along $E_i$'s of $T g : T E_{0|
M} \rightarrow T E_{1| M}$ with respect to the above direct sum decomposition.

\subsection{A Version of the Smale Conjecture and Smooth Paths in
$\tmop{Diff}_{\partial, U} (M)$}

The next few theorems will be used in the sixth step.

We will need the following version of the Smale Conjecture (Smale Theorem).

\begin{theorem} \label{SC}
  For any diffeomorphism $f : D^3 \rightarrow D^3$ such that $f_{| \partial
  D^3} = \tmop{id}$, there is a smooth ambient isotopy $F
  : D^3 \times I \rightarrow D^3$ rel $\partial D^3$ with $F_1 = f$.
\end{theorem}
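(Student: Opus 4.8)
The plan is to reduce the statement to the known fact that the group of self-diffeomorphisms of $D^3$ fixing $\partial D^3$ pointwise is path-connected, and then to repackage a path in that group as an ambient isotopy in the sense of Section 2. For the repackaging, I would first observe that a smooth isotopy $t\mapsto f_t$ through diffeomorphisms of $D^3$ with $f_0=\mathrm{id}$, $f_1=f$, and $f_t|_{\partial D^3}=\mathrm{id}$ for all $t$ is automatically an ambient isotopy rel $\partial D^3$ ending with $f$: writing $F(x,t)=f_t(x)$, the track $\widehat F(x,t)=(f_t(x),t)$ has derivative in block-triangular form with invertible diagonal blocks $d_xf_t$ and $\mathrm{id}$, hence is a local diffeomorphism, and it is a bijection of $D^3\times I$ with inverse $(z,t)\mapsto(f_t^{-1}(z),t)$, so it is a diffeomorphism; stability on $\partial D^3$ is immediate. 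Thus it suffices to produce such an $f_t$, i.e.\ to know that every self-diffeomorphism of $D^3$ that is the identity on $\partial D^3$ is smoothly isotopic rel $\partial D^3$ to $\mathrm{id}$, equivalently $\pi_0\,\mathrm{Diff}(D^3,\partial D^3)=0$.

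For this — the one genuinely deep ingredient — I would invoke Hatcher's resolution of the Smale Conjecture, $\mathrm{Diff}(S^3)\simeq O(4)$, from which one extracts that $\mathrm{Diff}(D^3,\partial D^3)$ is contractible, in particular connected; for our purposes it is enough to cite the earlier and weaker theorem of Cerf ($\Gamma_4=0$), which already asserts precisely that any self-diffeomorphism of $D^3$ restricting to the identity on $\partial D^3$ is smoothly isotopic to the identity rel $\partial D^3$. Either reference supplies the required isotopy $f_t$ directly, and combined with the first paragraph completes the proof.

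If one wished to route the argument through $S^3$ rather than quote $\pi_0\,\mathrm{Diff}(D^3,\partial D^3)=0$ verbatim, one would first use the Isotopy Uniqueness of Collars (Theorem~\ref{IUC}) to replace $f$, up to ambient isotopy rel $\partial D^3$, by a diffeomorphism equal to the identity on a collar of $\partial D^3$; then double along $\partial D^3$ to obtain an orientation-preserving $\widehat f\in\mathrm{Diff}^+(S^3)$ (orientation-preservation being automatic since $f|_{\partial D^3}=\mathrm{id}$ and $D^3$ is connected); apply Cerf's theorem that $\pi_0\,\mathrm{Diff}^+(S^3)=0$; and finally descend the resulting isotopy to one of $f$ rel $\partial D^3$. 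The main obstacle, in the only meaningful sense, is entirely absorbed into the cited theorem of Cerf (or Hatcher): no elementary argument exists and nothing here reproves it. On our side the only care required is the harmless smooth-path-versus-ambient-isotopy bookkeeping above, plus, in the $S^3$ route, the standard but slightly fussy passage from ``isotopic in $S^3$'' to ``isotopic rel $D^3$'' — which is exactly why quoting $\pi_0\,\mathrm{Diff}(D^3,\partial D^3)=0$ directly is the cleaner option, and the one I would take.
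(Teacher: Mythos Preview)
Your first-paragraph reduction is fine: a jointly smooth $F:D^3\times I\to D^3$ with each $F_t\in\tmop{Diff}_{\partial}(D^3)$ is indeed an ambient isotopy rel $\partial D^3$. The gap is in the sentence where you equate the existence of such an $F$ with ``$\pi_0\,\tmop{Diff}(D^3,\partial D^3)=0$'' and call the passage harmless bookkeeping. The statement $\pi_0=0$ (or contractibility) concerns $\tmop{Diff}_{\partial}(D^3)$ as a topological space with the $C^\infty$ topology: it yields a path $t\mapsto f_t$ for which all $x$-derivatives of $f_t(x)$ vary \emph{continuously} in $t$, but says nothing about differentiability in $t$. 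The Remark immediately following this theorem makes exactly this point, with the counterexample $F(x,t)=x+\max\{0,t-\tfrac12\}$ on $\mathbb{R}$: each $F_t$ is a diffeomorphism and all its $x$-derivatives vary continuously in $t$, yet $F$ is not jointly smooth. So the ``equivalently'' you assert is precisely what has to be argued. Your Cerf-on-$S^3$ fallback has the same defect, since $\Gamma_4=0$ is stated for $S^3$ and you yourself flag the descent to a \emph{jointly smooth} isotopy of $D^3$ rel $\partial D^3$ as ``fussy'' without carrying it out.

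The paper's proof is organized around exactly this obstacle. Rather than extracting a $\pi_0$ statement, it quotes Hatcher's result in its parametrized-families form (the theorem on p.~553 of \cite{HatcherSC}): a smooth map $g:S^2\times S^1\to\mathbb{R}^3$ restricting to an embedding on each $S^2$-slice extends to a smooth $\bar G:D^3\times S^1\to\mathbb{R}^3$ with the same property. Since input and output are smooth maps on product manifolds, joint smoothness is automatic. Concretely, the paper produces (via \cite{Hirsch} Theorem~8.3.1) a smooth isotopy $H:D^3\times I\to\mathbb{R}^3$ of embeddings from $\tmop{id}$ to $f$, reparametrizes so that $H$ is constant near $t=0,1$, closes its boundary trace to $g:S^2\times S^1\to\mathbb{R}^3$, applies Hatcher to obtain $\bar G$, lifts to $G:D^3\times I\to\mathbb{R}^3$, and sets $F_t=G_0\circ G_t^{-1}\circ H_t$; one then checks directly that $F_t(D^3)=D^3$, $F_t|_{\partial D^3}=\tmop{id}$, $F_0=\tmop{id}$, and $F_1=f$.
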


\begin{proof}
  Regard $f$ as an embedding $D^3 \hookrightarrow \mathbb{R}^3$. Since $f_{|
  \partial D^3} = \tmop{id}$, $f$ preserves orientation. By \cite{Hirsch} Theorem
  8.3.1, there is a smooth isotopy $H : D^3 \times I \rightarrow \mathbb{R}^3$
  such that $H_0 = \tmop{id}$ (the standard inclusion) and $H_1 = f$. Let
  $\rho : I \rightarrow I$ be a smooth function such that for some
  $\varepsilon > 0$, $\rho (t) = 0$ for $t < \varepsilon$ and $\rho (t) = 1$
  for $t > 1 - \varepsilon$. Modifying $H$ by pre-composing with $\tmop{id}_{D^3}
  \times \rho$, we may assume $H_t = \tmop{id}$ for $t < \varepsilon$ and $H_t
  = f$ for $t > 1 - \varepsilon$. In particular $H (x, t) = x$ for $x \in
  \partial D^3, t \in [0, \varepsilon) \cup (1 - \varepsilon, 1]$.
  
  Define $h : S^2 \times I \rightarrow \mathbb{R}^3$ by $h = H_{| \partial D^3
  \times I}$, and define $p : I \rightarrow S^1$ by $p (t) = (\cos (2\pi t), \sin (2\pi
  t))$. There exist a smooth map $g : S^2 \times S^1 \rightarrow \mathbb{R}^3$
  such that $g \circ (\tmop{id}_{S^2} \times p) = h : S^2 \times I \rightarrow
  \mathbb{R}^3$. By the Theorem on \cite{HatcherSC} p.553, there is a smooth extension
  $\bar{G} : D^3 \times S^1 \rightarrow \mathbb{R}^3$ of $g$ such that for any
  $z \in S^1$, $\bar{G}_{|D^3 \times \{ z \}}$ is an embedding.
  
  Define $G : D^3 \times I \rightarrow \mathbb{R}^3$ by $G = \bar{G} \circ
  (\tmop{id}_{D^3} \times p)$. For each $t \in I$, both $G_t$ and $H_t$ are
  embeddings of $D^3$ into $\mathbb{R}^3$. Since $G_{t| \partial D^3} = H_{t|
  \partial D^3}$, we must have $G_t (D^3) = H_t (D^3)$. In particular, if we
  denote by $\widehat{G}, \widehat{H}$ the track of $G$, $H$, then $\tmop{Im} \widehat{G}
  = \tmop{Im} \widehat{H}$. By definition, we know $G_0 = G_1$. We also have
  $G_{0| \partial D^3} = h_0 = H_{0| \partial D^3} = \tmop{id}$, which implies
  $G_0 (D^3) = D^3$.
  
  Define $F : D^3 \times I \rightarrow D^3$ by $F = G_0 \circ \pi_1 \circ
  \widehat{G}^{- 1} \circ \widehat{H}$, where $\pi_1 : D^3 \times I \rightarrow D^3$
  is the natural projection. Then $F_t = G_0 \circ G_t^{- 1} \circ H_t$. We
  know $F_0 = H_0 = \tmop{id}, F_1 = H_1 = f$ (for $G_0 = G_1$), and $F_{t|
  \partial D^3} = G_0 \circ G_t^{- 1} \circ H_{t| \partial D^3} = G_{0|
  \partial D^3} = \tmop{id}$ (for $G_{t| \partial D^3} = H_{t| \partial D^3}$
  and $G_{0| \partial D^3} = \tmop{id}$). Hence $F$ is the desired isotopy.
\end{proof}

\begin{remark}
  The above theorem is not equivalent to the well known result that the space
  $\tmop{Diff}_{\partial} (D^3) = \{ \tmop{diffeomorphism} \tmop{of} D^3
  \tmop{that} \tmop{are} \tmop{identical} \tmop{on} \partial D^3 \}$ is path
  connected. Let $X$ be a smooth manifold. A continuous isotopy $F : X \times
  I \rightarrow Y$ where each $F_t$ is a diffeomorphism for all $t \in I$ and
  partial derivatives of $F_t$ of all orders varies continuously with respect
  to $t$ need not be smooth. A counter example can be given as: $F :
  \mathbb{R} \times I \rightarrow \mathbb{R}, F (x, t) = x + \max \left\{ 0, t
  - \frac{1}{2} \right\}$ (this example is taken from \cite{Kupers} p.55).
\end{remark}

Let $M$ be a smooth manifold with boundary. Denote by $\tmop{Diff} (M)$ the
collection of all self-diffeomorphisms. Define
\[ \tmop{Diff}_{\partial} (M) = \{ f \in \tmop{Diff} (M) |f_{| \partial M} =
   \tmop{id} \} , \]
\begin{align*}
\tmop{Diff}_{\partial, D} (M) = \{ f \in \tmop{Diff}_{\partial} (M) | &
   \tmop{partial} \tmop{derivatives} \tmop{of} f \tmop{of} \tmop{all} \tmop{order}\\
    & \tmop{coincide}
   \tmop{with} \tmop{those} \tmop{of} \tmop{id} \tmop{along} \partial M \},
\end{align*}
\[ \tmop{Diff}_{\partial, U} (M) = \{ f \in \tmop{Diff} (M) |f = \tmop{id}
   \tmop{on} \tmop{some} \tmop{neighborhood} \tmop{of} \partial M \} \]
Proposition 4.3.1 and Theorem 5.3.1 of \cite{Kupers} shows that the inclusions
$\tmop{Diff}_{\partial, D} (M) \hookrightarrow \tmop{Diff}_{\partial} (M)$ and
$\tmop{Diff}_{\partial, U} (M) \hookrightarrow \tmop{Diff}_{\partial, D} (M)$
are weak homotopy equivalences. Hence if $f \in \tmop{Diff}_{\partial,
U} (M)$, and there is an ambient isotopy $F$ of $M$ rel $\partial M$ such that $F_1 =
f$, then there exist a topological ambient isotopy $F'$ of $M$ such that $F'_1
= f$, $F_t \in \tmop{Diff}_{\partial, U} (M)$ for all $t \in I$ and partial
derivatives of $F_t$ varies continuously with respect to $t$. As we have seen
in the remark above, this is weaker than saying $F'$ is smooth. The following
two theorems show that this is not an issue.

\begin{theorem} \label{PathI}
  Let $M$ be a smooth manifold with boundary $\partial M$. If $f \in
  \tmop{Diff}_{\partial, U} (M)$ and there is an ambient isotopy $H$ of $M$ such that
  $H_1 = f$ and $H_t \in \tmop{Diff}_{\partial, D} (M)$ for all $t \in I$.
  Then there exist a smooth ambient isotopy $H'$ of $M$ such that $H'_1 = f$
  and $H'_t \in \tmop{Diff}_{\partial, U} (M)$.
\end{theorem}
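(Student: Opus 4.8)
The plan is to convert the hypothesis into a statement about path components, reduce to a pure smoothing problem for a continuous path of diffeomorphisms, and then smooth that path by subdivision together with a canonical local straightening by the exponential map. First, the given $H$ yields a continuous path $t \mapsto H_t$ in $\tmop{Diff}_{\partial, D}(M)$, with its $C^\infty$ topology, from $\tmop{id}$ to $f$; in particular $\tmop{id}$ and $f$ lie in the same path component of $\tmop{Diff}_{\partial, D}(M)$. Since the inclusion $\tmop{Diff}_{\partial, U}(M) \hookrightarrow \tmop{Diff}_{\partial, D}(M)$ is a weak homotopy equivalence, hence injective on $\pi_0$, while $\tmop{id}$ and $f = H_1$ already belong to $\tmop{Diff}_{\partial, U}(M)$, it follows that $\tmop{id}$ and $f$ lie in the same path component of $\tmop{Diff}_{\partial, U}(M)$. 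Thus there is a continuous path $\gamma : I \rightarrow \tmop{Diff}_{\partial, U}(M)$ with $\gamma_0 = \tmop{id}$ and $\gamma_1 = f$; the original $H$ now plays no further role, and it remains only to replace $\gamma$ by a genuinely smooth ambient isotopy with the same endpoints, all of whose stages lie in $\tmop{Diff}_{\partial, U}(M)$.

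For the smoothing we use a canonical local straightening. Fix a Riemannian metric on $M$ — to have the exponential map $\exp$ available near $\partial M$ one works in a slight enlargement $\widehat M \supset M$ obtained by attaching an exterior collar, which is harmless since all vector fields below will be supported in $\tmop{Int} M$. There is a neighborhood $\mathcal{W}$ of $\tmop{id}$ in $\tmop{Diff}(M)$ such that, for $\psi \in \mathcal{W} \cap \tmop{Diff}_{\partial, U}(M)$, the formula $X^\psi(x) \assign \exp_x^{-1}(\psi(x))$ defines a smooth vector field supported in $\tmop{Int} M$; and, after shrinking $\mathcal{W}$ and using the compactness of $I$ and the openness of $\tmop{Diff}$ in $C^\infty(\cdot, \cdot)$, one may arrange in addition that every stage of the smooth map
\[ J^\psi : M \times I \rightarrow M, \qquad J^\psi(x, s) \assign \exp_x\!\big( \rho(s)\, X^\psi(x) \big) \]
is a diffeomorphism, where $\rho : I \rightarrow I$ is a fixed non-decreasing smooth function with $\rho \equiv 0$ near $0$ and $\rho \equiv 1$ near $1$. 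By construction $J^\psi$ is smooth, $J^\psi_0 = \tmop{id}$, $J^\psi_1 = \psi$, each $J^\psi_s$ equals $\tmop{id}$ near $\partial M$ (so $J^\psi_s \in \tmop{Diff}_{\partial, U}(M)$), and $J^\psi_s = \tmop{id}$ (resp.\ $\psi$) for $s$ in a neighborhood of $0$ (resp.\ of $1$).

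We then smooth $\gamma$ by subdivision. Since $\gamma$ is continuous and $\tmop{Diff}(M)$ is a topological group, the map $(s, t) \mapsto \gamma_s \circ \gamma_t^{-1}$ is continuous and sends the diagonal to $\tmop{id}$; by compactness of $I$ one chooses $0 = t_0 < t_1 < \cdots < t_n = 1$ with $g_i \assign \gamma_{t_i} \circ \gamma_{t_{i-1}}^{-1} \in \mathcal{W}$ for each $i$ (note $g_i \in \tmop{Diff}_{\partial, U}(M)$ as a composition of such). On $M \times [t_{i-1}, t_i]$ we set $H'(x, t) \assign J^{g_i}\big( \gamma_{t_{i-1}}(x), s_i(t) \big)$, where $s_i : [t_{i-1}, t_i] \rightarrow I$ is the affine bijection. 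Each piece is smooth (here $\gamma_{t_{i-1}}$ is a single fixed smooth diffeomorphism), and near each $t_i$ the two adjacent pieces are both constantly equal to $\gamma_{t_i}$ because $\rho$ is flat at $0$ and $1$, so the pieces fit into a smooth map $H' : M \times I \rightarrow M$ with $H'_0 = \tmop{id}$ and $H'_1 = g_n \circ \gamma_{t_{n-1}} = \gamma_1 = f$. Each $H'_t$ is a composition of diffeomorphisms, hence a diffeomorphism, so the track $\widehat{H'}$ is a smooth bijection with everywhere invertible differential (block triangular, diagonal blocks $D_x H'_t$ and $1$), hence a diffeomorphism, and $H'$ is a smooth ambient isotopy. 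Finally, each $H'_t$ is the identity near $\partial M$, since $\gamma_{t_{i-1}}$ and $g_i$ both are, so $H'_t \in \tmop{Diff}_{\partial, U}(M)$, as required.

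I expect the main obstacle to be the canonical straightening lemma itself — in particular the assertion that $J^\psi_s$ is a diffeomorphism for all $s \in I$ \emph{simultaneously} once $\psi$ is $C^\infty$-close enough to $\tmop{id}$ — together with the minor bookkeeping needed so that $\exp$ is defined near $\partial M$ (handled by the enlargement $\widehat M$) and so that the ``identity near $\partial M$'' condition survives the compositions. Given that lemma, the subdivision, the smoothing of the joints, and the check that the track is a diffeomorphism are routine.
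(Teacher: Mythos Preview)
Your approach is correct but takes a genuinely different route from the paper's. The paper never invokes the Kupers weak homotopy equivalence in the proof of this theorem; instead it gives a direct, elementary construction using only the given smooth $H$: it conjugates $H$ by a collar-shift diffeomorphism $s_\eta$ (which pushes $\partial M$ inward by a fixed amount in a collar) to obtain a smooth isotopy $G_t = s_\eta \circ H_t \circ s_\eta^{-1}$ ending at $g = s_\eta \circ f \circ s_\eta^{-1}$, and the hypothesis $H_t \in \tmop{Diff}_{\partial,D}(M)$ is exactly what makes each $G_t$ lie in $\tmop{Diff}_{\partial,U}(M)$; it then connects $g$ back to $f$ by the explicit family $F_t = \psi_t \circ f \circ \psi_t^{-1}$, where $\psi_t$ linearly interpolates $s_\eta$ and $\tmop{id}$ in the collar coordinate, using $f \in \tmop{Diff}_{\partial,U}(M)$ to see that each $F_t$ is as well. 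No $\pi_0$ argument, no Riemannian metric, no subdivision.

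What each approach buys: the paper's argument is self-contained, uses nothing about the topology of $\tmop{Diff}(M)$, and works verbatim for non-compact $M$. Your argument is more conceptual and would equally smooth \emph{any} continuous path in $\tmop{Diff}_{\partial,U}(M)$, but it imports Kupers' theorem as a black box and, as you note, leans on the openness of $\tmop{Diff}(M)$ in $C^\infty(M,M)$ and the continuity of composition/inversion --- points that are routine for compact $M$ but require care in the strong Whitney topology when $M$ is non-compact (and the theorem as stated has no compactness hypothesis). In the paper's applications $M$ is always a compact handlebody, so in context your argument goes through.
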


\begin{proof}
  Choose a collar $C : \partial M \times [0, 1) \rightarrow M$ of $\partial M$
  and glue a collar $\partial M \times (- 1, 0]$ to $M$ via the identity map
  of $\partial M$. Define $\widetilde{M} = (\partial M \times (- 1, 0])
  \cup_{\partial M} M$. Then $\widetilde{M}$ has a natural $C^{\infty}$ structure.
  Let $\widetilde{C} : \partial M \times (- 1, 1) \rightarrow M$ be defined by
  combining $C$ and the natural inclusion of $\partial M \times (- 1, 0]$. By
  our assumptions, $f$ extends by identity to a diffeomorphism of $\widetilde{M}$.
  Similarly, $H$ extends by identity to an ambient isotopy of $\widetilde{M}$. We
  still call the extensions $f$ and $H$, respectively.
  
  Let $\eta : (- 1, 1) \rightarrow (- 1, 1)$ be a diffeomorphism such that
  
  (i) $\eta (s) = s$ for $s > \frac{2}{3}$
  
  (ii) $\eta (s) \geqslant s$ for all $s$
  
  (iii) $\eta ([0, 1)) = \left[ \frac{1}{2}, 1 \right)$
  
  Define a diffeomorphism $s_{\eta} : \widetilde{M} \rightarrow \widetilde{M}$ by
  \begin{eqnarray*}
    s_{\eta} (x) = \left\{\begin{array}{l}
      \widetilde{C} (y, \eta (s)), \tmop{if} x = \widetilde{C} (y, s)\\
      x \hspace{4em}, \tmop{otherwise}
    \end{array}\right. &  & 
  \end{eqnarray*}
  and define $g = s_{\eta} \circ f \circ s_{\eta}^{- 1}$. Let $G : \widetilde{M}
  \times I \rightarrow \widetilde{M}$ be defined by
  \[ G (x, t) = s_{\eta} \circ H (s_{\eta}^{- 1} (x), t) \]
  By our choice of $s_{\eta}$, for each $t \in I$, $G_t = s_{\eta} \circ H_t
  \circ s_{\eta}^{- 1}$ is a diffeomorphism of $\widetilde{M}$ that coincide with
  $\tmop{identity}$ on $\widetilde{C} \left( \partial M \times \left( - 1,
  \frac{1}{2} \right] \right)$. Hence $G_{|M \times I} : M \times I
  \rightarrow M$ is a smooth ambient isotopy ending with $g$ that is stable on
  $\widetilde{C} \left( \partial M \times \left[ 0, \frac{1}{2} \right] \right)$.
  In particular, $G_{t|M} \in \tmop{Diff}_{\partial, U} (M)$.
  
  Next we join $f$ and $g$ by an isotopy.
  
  Define $\psi : \widetilde{M} \times I \rightarrow \widetilde{M}$ by
  \[ \psi (x, t) = \left\{\begin{array}{l}
       \widetilde{C} (y, (1 - t) \eta (s) + t s), \tmop{if} x = \widetilde{C} (y, s)\\
       x \hspace{9em}, \tmop{otherwise}
     \end{array}\right. \]
  and let $\widehat{\psi}$ be the track of $\psi$. It is not hard to check that
  $\psi_0 = s_{\eta}, \psi_1 = \tmop{id}$ and $\psi_t$ is a diffeomorphism of
  $\widetilde{M}$ for all $t \in I$. In particular, $\widehat{\psi}$ is a
  diffeomorphism.
  
  The promised isotopy $F : \widetilde{M} \times I \rightarrow \widetilde{M}$ is
  defined by
  \[ F (x, t) = \psi (f \circ \widehat{\psi}^{- 1} (x, t), t) = \psi_t \circ f
     \circ \psi_t^{- 1} (x) \]
  Then $F$ is a smooth isotopy with $F_0 = g, F_1 = f$. Since $f \in
  \tmop{Diff}_{\partial, U} (M)$, there exist a smooth, positive function
  $\delta : \partial M \rightarrow \mathbb{R}$ such that $f = \tmop{id}$ on $W
  = \{ \widetilde{C} (x, s) | - 1 < s \leqslant \delta (x) \}$. By condition (ii)
  of $\eta$ we know that $(1 - t) \eta (s) + t s \geqslant s$. Hence
  $\psi_t^{- 1} (W) \subset W$ for all $t \in I$. Restricting $F$ to $M \times
  I$, we obtain an ambient isotopy of $M$ from $g$ to $f$ that is stable on
  $W$. Now combine $F_{|M \times I}$ with $G_{|M \times I}$ and smooth their
  joint.
\end{proof}

\begin{theorem} \label{PathII}
  Let $M$ be a smooth manifold with compact boundary $\partial M$. If
  $f \in \tmop{Diff}_{\partial, D} (M)$ and there is an ambient isotopy $H$ of $M$ rel
  $\partial M$ such that $H_1 = f$. Then there exist a smooth ambient isotopy
  $H'$ of $M$ such that $H'_1 = f$ and $H'_t \in \tmop{Diff}_{\partial, D}
  (M)$.
\end{theorem}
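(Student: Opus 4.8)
\emph{Strategy.} The idea is to reduce the assertion to a pure smoothing problem by means of the weak homotopy equivalences recalled above, and then to settle that problem by hand, much as in the proof of Theorem~\ref{PathI}. First, the hypothesis provides in particular a continuous path $t \mapsto H_t$ in $\tmop{Diff}_{\partial} (M)$ (with the $C^{\infty}$ topology) from $\tmop{id}$ to $f$: even a merely topological ambient isotopy rel $\partial M$ has partial derivatives varying continuously in $t$, as noted before Theorem~\ref{PathI}. Hence $[f] = [\tmop{id}]$ in $\pi_0 (\tmop{Diff}_{\partial} (M))$. Since $f \in \tmop{Diff}_{\partial, D} (M)$ and the inclusion $\tmop{Diff}_{\partial, D} (M) \hookrightarrow \tmop{Diff}_{\partial} (M)$ is a weak homotopy equivalence, the induced bijection on $\pi_0$ yields $[f] = [\tmop{id}]$ already in $\pi_0 (\tmop{Diff}_{\partial, D} (M))$; thus there is a continuous path $\gamma : I \to \tmop{Diff}_{\partial, D} (M)$ with $\gamma_0 = \tmop{id}$ and $\gamma_1 = f$. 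It remains to replace $\gamma$ by a genuinely smooth ambient isotopy $H'$ of $M$ with $H'_1 = f$ and $H'_t \in \tmop{Diff}_{\partial, D} (M)$ for all $t$.

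\emph{Smoothing the path.} Pass to the double $\widetilde{M} = (\partial M \times (- 1, 0]) \cup_{\partial M} M$ with collar $\widetilde{C} : \partial M \times (- 1, 1) \to \widetilde{M}$, as in the proof of Theorem~\ref{PathI}, and put $K = \widetilde{C} (\partial M \times (- 1, 0])$. Extension by the identity identifies $\tmop{Diff}_{\partial, D} (M)$ with the subgroup $\{ g \in \tmop{Diff} (\widetilde{M}) : g = \tmop{id} \text{ on } K \}$ (a diffeomorphism of $\widetilde{M}$ fixing the codimension-$0$ piece $K$ pointwise fixes $\tmop{Int} K$, hence preserves $M = \widetilde{M} \setminus \tmop{Int} K$ and restricts there to an element of $\tmop{Diff}_{\partial, D} (M)$; conversely a diffeomorphism of $M$ infinitely tangent to the identity along $\partial M$ glues smoothly with $\tmop{id}$). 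Let $\overline{\gamma}$ be the path in $\tmop{Diff} (\widetilde{M})$ corresponding to $\gamma$. Fix a tubular neighborhood of the diagonal $\Delta \subset \widetilde{M} \times \widetilde{M}$ (via, say, a Riemannian exponential map): this gives an open set $W \supseteq \Delta$, a smooth $\theta : W \to T \widetilde{M}$ with $\theta (x, y) \in T_x \widetilde{M}$ and $\theta (x, x) = 0$, and for each $x$ a diffeomorphism $E_x$ from a neighborhood of $0 \in T_x \widetilde{M}$ onto a neighborhood of $x$ (smooth in $x$) with $E_x (0) = x$ and $E_x (\theta (x, y)) = y$. For any $g \in \tmop{Diff} (\widetilde{M})$ whose graph lies in $W$, the formula $g_s (x) = E_x (s\, \theta (x, g (x)))$, $s \in [0, 1]$, defines a smooth map $\widetilde{M} \times [0, 1] \to \widetilde{M}$ with $g_0 = \tmop{id}$, $g_1 = g$, with $g_s = \tmop{id}$ at every fixed point of $g$, and --- provided $g$ is $C^{\infty}$-close enough to $\tmop{id}$ that all $g_s$ remain in the $C^{\infty}$-open set $\tmop{Diff} (\widetilde{M}) \subset C^{\infty} (\widetilde{M}, \widetilde{M})$ --- with each $g_s$ a diffeomorphism.

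\emph{Assembling $H'$.} Using compactness of $I$ and continuity of $\overline{\gamma}$, pick $0 = t_0 < t_1 < \cdots < t_N = 1$ fine enough that each $g^{(i)} := \overline{\gamma}_{t_i} \circ \overline{\gamma}_{t_{i - 1}}^{- 1}$ is $C^{\infty}$-close enough to $\tmop{id}$ for the interpolation $s \mapsto g^{(i)}_s$ above to be defined and to consist of diffeomorphisms; note $g^{(i)} = \tmop{id}$ on $K$, hence $g^{(i)}_s = \tmop{id}$ on $K$ for all $s$. Choose smooth $\rho_i : [t_{i - 1}, t_i] \to [0, 1]$ equal to $0$ near $t_{i - 1}$ and to $1$ near $t_i$, with all derivatives vanishing at both endpoints, and set
\[ H'_t = g^{(i)}_{\rho_i (t)} \circ \overline{\gamma}_{t_{i - 1}} \quad \text{for } t \in [t_{i - 1}, t_i] . \]
Near each $t_i$ one has $H'_t \equiv \overline{\gamma}_{t_i}$ with all $t$-derivatives zero, so the pieces assemble into a smooth map $H' : \widetilde{M} \times I \to \widetilde{M}$ with $H'_0 = \tmop{id}$, $H'_1 = \overline{\gamma}_1 = \overline{f}$, with each $H'_t$ a diffeomorphism of $\widetilde{M}$ equal to $\tmop{id}$ on $K$, and with track $\widehat{H'}$ a smooth bijection of everywhere invertible differential, hence a diffeomorphism of $\widetilde{M} \times I$. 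Restricting $H'$ to $M \times I$ gives the required smooth ambient isotopy of $M$, ending at $f$ and lying in $\tmop{Diff}_{\partial, D} (M)$ at every time.

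\emph{Main difficulty.} The crux is the smoothing itself, and the subtlety there is precisely the one flagged in the remark after Theorem~\ref{SC}: a path through diffeomorphisms whose derivatives merely vary continuously need not assemble into a smooth isotopy. For this reason the cut-off functions $\rho_i$ must be \emph{flat} at their endpoints, with the consecutive pieces glued so that all $t$-derivatives agree at the $t_i$, rather than $\gamma$ being simply reparametrised; one must also keep each interpolation inside a $C^{\infty}$-neighborhood of $\tmop{id}$, so that it consists of diffeomorphisms (for noncompact $M$ this is to be done in the Whitney topology throughout, but in the application $M$ is a $3$-ball and the point does not arise), and keep every map equal to $\tmop{id}$ on $K$, which is automatic here since both $\overline{\gamma}_t$ and the vector fields $x \mapsto \theta (x, g^{(i)} (x))$ already vanish on $K$. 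A shorter, less self-contained route would instead invoke a smoothing theorem for paths in diffeomorphism groups directly.
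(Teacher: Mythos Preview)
Your argument is essentially correct, but it follows a genuinely different route from the paper's.

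The paper never leaves the given smooth isotopy $H$. Using a collar it decomposes the $1$-jet of each $H_s$ along $\partial M$ into a normal-scaling part $\phi(H_s)$ and a shear part $\chi(H_s)$, then builds explicit correcting diffeomorphisms $\widetilde{\eta}(\lambda)$ (collar reparametrisations) and $\Phi(X)$ (built from boundary flows) and post-composes to kill $\phi$ and $\chi$ in turn; smooth dependence on $s$ is checked via the subsequent lemma on flows depending on a parameter. You, by contrast, throw away $H$ after reading off that $[f]=[\tmop{id}]$ in $\pi_0(\tmop{Diff}_\partial(M))$, invoke the Kupers weak equivalence to get a continuous path already inside $\tmop{Diff}_{\partial,D}(M)$, pass to the one-sided extension $\widetilde{M}$ so that this becomes a path of diffeomorphisms fixing a codimension-$0$ piece $K$, and then smooth by exponential interpolation on short subintervals with flat cut-offs. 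The paper's proof is explicit and self-contained; yours is shorter and more conceptual but treats the $\pi_0$-bijection from \cite{Kupers} as a black box (which is fair, since the paper has already cited it).

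Two small points to tighten. First, the sentence about ``even a merely topological ambient isotopy'' is off: the remark after Theorem~\ref{SC} says the \emph{opposite} direction fails, and in the paper's conventions the hypothesised $H$ is already smooth, so you get the continuous path in $\tmop{Diff}_\partial(M)$ for free. Second, the claim that each $g^{(i)}_s$ is a diffeomorphism of $\widetilde{M}$ fixing $K$ needs a word of care: fixing $K$ is automatic from $\theta(x,g^{(i)}(x))=0$ on $K$, but to ensure that short geodesics between points of $\tmop{Int}M$ do not wander into $\tmop{Int}K$ you should choose the Riemannian metric to be a product on the collar $\widetilde{C}$ (so the normal coordinate varies affinely along geodesics and stays positive). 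With that choice, $g^{(i)}_s$ is the identity on $K$ and $C^1$-close to $\tmop{id}$ on the compact $M$, hence a diffeomorphism of $\widetilde{M}$ preserving $M$, and the rest goes through.
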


\begin{proof}
  Choose a collar $C : \partial M \times I \hookrightarrow M$. Consider the
  following short exact sequence of vector bundles over $\partial M$:
  \[ 0 \rightarrow \partial M \times \mathbb{R} \overset{\alpha}{\rightarrow}
     T M_{| \partial M} \overset{\beta}{\rightarrow} T \partial M \rightarrow
     0 \]
  where $\alpha (x, s) = s \frac{\partial}{\partial t} C (x, t)_{|t = 0}$ and
  $\beta$ is the natural projection. The sequence splits by the natural
  inclusion $T \partial M \hookrightarrow T M_{| \partial M}$. Hence we have a
  decomposition $T M_{| \partial M} = T \partial M \oplus \varepsilon$, where
  $\varepsilon = \partial M \times \mathbb{R}$ is the trivial bundle.
  
  Given any $g \in \tmop{Diff}_{\partial} (M)$, there is an induced bundle
  automorphism $T g$ of $T M_{| \partial M}$. Let $\chi (g) : \varepsilon
  \rightarrow T \partial M, \phi (g) : \varepsilon \rightarrow \varepsilon$ be
  components of $T g$. The other two components of $T g$ are $\tmop{id} : T
  \partial M \rightarrow T \partial M$ and $0 : T \partial M \rightarrow
  \varepsilon$. Choosing a global frame of $\varepsilon$ (say,
  $\frac{\partial}{\partial s}$), we may regard $\chi (g)$ as a vector filed
  on $\partial M$ and $\phi (g)$ as a smooth function on $\partial M$. It is
  obvious that $\phi (g)$ is positive on $\partial M$.
  
  Since $f \in \tmop{Diff}_{\partial, D} (M)$, we have $T f = \tmop{id}$, thus
  $\chi (f) = 0$ and $\phi (f) \equiv 1$.
  
  The first step is to replace $H$ by an ambient isotopy $F$ rel $\partial M$
  with $F_1 = f$ and $\phi (F_s) \equiv 1$ for all $s \in I$.
  
  Choose a smooth $\eta : (0, + \infty) \times I \rightarrow I$ such that
  
  (i) for each $l \in (0, + \infty)$, $\eta_l : I \rightarrow I$ is a
  diffeomorphism that maps 0 to 0 and $\eta_l = \tmop{id}$ on $\left[
  \frac{1}{2}, 1 \right]$
  
  (ii) for each $l \in (0, + \infty)$, the derivative of $\eta_l$ at 0 is
  $\frac{1}{l}$
  
  (iii) $\eta_1 = \tmop{id}_I$
  
  For any smooth positive function $\lambda$ on $\partial M$, define
  $\widetilde{\eta} (\lambda) : M \rightarrow M$ by
  \[ \widetilde{\eta} (\lambda) (p) \assign \left\{\begin{array}{l}
       C (q, \eta (\lambda (q), t)), \tmop{if} p = C (q, t)\\
       p \hspace{6em}, \tmop{otherwise}
     \end{array}\right. \]
  It is not hard to see that $\widetilde{\eta} (\lambda) \in
  \tmop{Diff}_{\partial} (M)$, and that $\phi (\widetilde{\eta} (\lambda)) =
  \frac{1}{\lambda} : \varepsilon \rightarrow \varepsilon$. Hence for any $g
  \in \tmop{Diff}_{\partial} (M)$, we have $\phi (\widetilde{\eta} (\phi (g))
  \circ g) \equiv 1$.
  
  Now define $F : M \times I \rightarrow M$ by
  \[ F (p, s) : = H(\widetilde{\eta} (\phi (H_s))(p),s) =
     \left\{\begin{array}{l}
       H(C (q, \eta (\phi (H_s) (q), t)),s), \tmop{if} p = C (q, t)\\
       H (p, s) \hspace{8em}, \tmop{otherwise}
     \end{array}\right. \]
  For each $s \in I$, $F_s = H_s \circ \widetilde{\eta} (\phi (H_s)) \in
  \tmop{Diff}_{\partial} (M)$ and $\phi (F_s) \equiv 1$. To see that $F$ is
  smooth, note that $\phi (H_s)$ is determined by the partial derivatives of
  $H_s$, hence $\phi (H_s) (q)$ depends smoothly on $(q, s)$. We can compute
  $F_0 = H_0 \circ \widetilde{\eta} (\phi (H_0)) = \widetilde{\eta} (\phi (\tmop{id}))
  = \widetilde{\eta} (c_1) = \tmop{id}$, where $c_1$ is the constant function with
  value 1, and $F_1 = H_1 \circ \widetilde{\eta} (\phi (H_1))  = f  \circ \widetilde{\eta} (\phi
  (f)) = f \circ \widetilde{\eta} (c_1) = f$. It is obvious that $F$ is
  stable on $\partial M$.
  
  The second step is to construct the desired $H'$ from $F$.
  
  Choose a smooth function $\rho : I \rightarrow I$ such that $\rho (t) = t$
  for $t < \frac{1}{3}$ and $\rho (t) = 0$ for $t > \frac{2}{3}$.
  
  For any vector field $X$ on $\partial M$, let $\varphi_X : \partial M \times
  \mathbb{R} \rightarrow \partial M$ be the flow of $X$ (this is where we need
  compactness of $\partial M$). Define $\Phi (X) : M \rightarrow M$ by
  \[ \Phi (X) (p) = \left\{\begin{array}{l}
       C (\varphi_X (q, t \rho (t)), t), \tmop{if} p = C (q, t)\\
       p \hspace{6em}, \tmop{otherwise}
     \end{array}\right. \]
  It is easy to check that $\Phi (X)$ is well-defined and smooth, and that
  $\Phi (X)_{| \partial M} = \tmop{id}$. We also have $\Phi (X) \circ \Phi (-
  X) = \tmop{id}$, hence $\Phi (X) \in \tmop{Diff}_{\partial} (M)$. For $t <
  \frac{1}{3}$, $\Phi (X) \circ C (q, t) = C (\varphi_X (q, t), t)$, thus
  $\chi (\Phi (X)) = X, \phi (\Phi (X)) \equiv 1$.
  
  Now define $H' : M \times I \rightarrow M$ by
  \[ H' (p, s) = \Phi (- \chi (F_s)) \circ F (p, s) \]
  For each $s \in I$, it is easy to verify that $\Phi (- \chi (F_s)) \circ F_s
  \in \tmop{Diff}_{\partial, D} (M)$. We may compute $H'_0 = \Phi (- \chi
  (F_0)) \circ F_0 = \Phi (- \chi (\tmop{id})) = \Phi (0) = \tmop{id}$ and
  $H'_1 = \Phi (- \chi (F_1)) \circ F_1 = \Phi (- \chi (f)) \circ f = \Phi (0)
  \circ f = f$. It remains to verify the smoothness of $H'$. Since $F$ is
  smooth, it suffice to check the smoothness of the map $G : M \times I
  \rightarrow M$
  \[ G (x, s) = \Phi (- \chi (F_s)) (x) = \left\{\begin{array}{l}
       C (\varphi_{- \chi (F_s)} (q, t \rho (t)), t), \tmop{if} x = C (q, t)\\
       x \hspace{9em}, \tmop{otherwise}
     \end{array}\right. \]
  It boils down to checking that $\varphi_{- \chi (F_s)} (q, t \rho (t))$ depends smoothly on $(s,q,t)$. This is taken care of by the next lemma.
  
  \ 
\end{proof}

\begin{lemma}
  Let $M$ be a manifold without boundary, J be an interval, $V : M \times J
  \rightarrow T M$ be a time dependent vector field. For each $\mu \in J$, let
  $V_{\mu} : M \rightarrow T M$ be the restriction of $V$ on $M \times \{ \mu
  \}$, and denote the flow generated by $V_{\mu}$ as $\varphi_{\mu}$. Then the
  map $F$ sending $(x, t, \mu) \in M \times \mathbb{R} \times J$ to
  $\varphi_{\mu} (x, t)$ is defined on an open subset $O$ of $M \times
  \mathbb{R} \times J$, and is smooth on $O$.
\end{lemma}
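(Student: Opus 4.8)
The plan is to reduce this lemma to the standard smooth dependence theorem for ODEs by the usual trick of absorbing the parameter $\mu$ into an enlarged autonomous system, then transporting the conclusion back. Concretely, I would consider the manifold $\widetilde M := M \times J$ (a manifold without boundary, possibly with boundary only if $J$ has endpoints — but we may take $J$ open, or work on its interior and handle endpoints by the usual one-sided extension, or simply enlarge $J$ slightly since $V$ is only used on a compact sub-parameter range in the application) and define on $M \times \mathbb{R} \times J$ — no wait, rather on $\widetilde M$ — the vector field $\widetilde V(x,\mu) := \bigl(V(x,\mu,\text{--}),\, 0\bigr) \in T_x M \oplus T_\mu J = T_{(x,\mu)}\widetilde M$, i.e. the time-\emph{independent} vector field that is $V_\mu$ in the $M$-direction and zero in the $J$-direction. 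By the fundamental theorem on flows (e.g. \cite{LeeSM} Theorem 9.12), $\widetilde V$ generates a (maximal) flow $\widetilde\varphi$ defined on an open subset $\widetilde O \subset \widetilde M \times \mathbb{R}$, and $\widetilde\varphi$ is smooth there.

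Next I would observe that, because the $J$-component of $\widetilde V$ vanishes, the flow $\widetilde\varphi$ preserves each slice $M \times \{\mu\}$: an integral curve of $\widetilde V$ starting at $(x,\mu)$ has constant second coordinate $\mu$, and its first coordinate $t \mapsto \pi_1\widetilde\varphi((x,\mu),t)$ is precisely an integral curve of the (time-independent) vector field $V_\mu$ on $M$ through $x$. By uniqueness of integral curves this first coordinate equals $\varphi_\mu(x,t)$ wherever the latter is defined, and the maximal domains agree. Hence $F(x,t,\mu) = \pi_1\bigl(\widetilde\varphi((x,\mu),t)\bigr)$ after the obvious reshuffling of factors $M \times \mathbb{R} \times J \cong M \times J \times \mathbb{R} = \widetilde M \times \mathbb{R}$, its domain $O$ is the image of $\widetilde O$ under this diffeomorphism hence open, and $F$ is smooth on $O$ as a composition of smooth maps.

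The only genuine subtlety — and the step I expect to be the main (mild) obstacle — is the book-keeping needed when $J$ is not open, since then $\widetilde M = M \times J$ is a manifold with boundary and the cited flow theorems are stated for manifolds without boundary. The clean fix is: it suffices to prove smoothness near any given $\mu_0 \in J$, so pick an open interval $J_0$ with $\mu_0 \in J_0$ and $\overline{J_0} \cap J$ compact, choose a slightly larger open interval $J_1 \supset \overline{J_0}$, and extend $V$ smoothly to $M \times J_1$ (possible since $V$ extends to a smooth map on a neighbourhood by the definition of smoothness on a manifold with boundary/corners, or simply by multiplying a local smooth extension by a bump function in the $\mu$-variable supported in $J_1$ and equal to $1$ on $\overline{J_0}$). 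Running the argument above with $J$ replaced by the open interval $J_1$ gives smoothness of $F$ on an open subset of $M \times \mathbb{R} \times J_1$, and restricting to $M \times \mathbb{R} \times J_0$ recovers the claim near $\mu_0$; since $\mu_0$ was arbitrary and the domain statement is local, we are done. I would also remark that in the intended application ($V = -\chi(F_s)$, $J = I$, the extra reparametrisation $t \mapsto t\rho(t)$) the composition with the smooth map $(q,t) \mapsto (q, t\rho(t))$ and with the collar chart $C$ preserves smoothness, so the lemma as stated is exactly what Theorem \ref{PathII} needs.
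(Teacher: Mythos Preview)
Your proposal is correct, and the handling of the boundary case for $J$ is adequate for the application. The approach, however, differs from the paper's. The paper argues directly at the ODE level: it first treats the special case where the entire flow segment $\{\varphi_{\mu_0}(x_0,t): 0\le t\le t_0\}$ lies in a single chart, invoking classical smooth-dependence theorems for ODEs (Coddington), and then handles the general case by covering the flow segment with finitely many charts, subdividing $[0,t_0]$ accordingly, and propagating smoothness along the subdivision via the cocycle identity $\varphi_\mu(x,t)=\varphi_\mu(\varphi_\mu(x,s_{i-1}),t-s_{i-1})$ and induction. Your argument instead absorbs the parameter $\mu$ into the phase space $\widetilde M=M\times J$, applies the fundamental theorem on flows once to the autonomous field $\widetilde V=(V_\mu,0)$, and reads off the conclusion from the slice-preservation. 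This is shorter and more conceptual --- the chart-patching and induction are hidden inside the flow theorem you cite --- whereas the paper's route is more self-contained and makes the reliance on the underlying ODE theory explicit. Both are standard; yours is the usual textbook reduction.
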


\begin{proof}
  This is a local problem. Choose $(x_0, t_0, \mu_0) \in O$. It suffices to
  check that $F$ is defined and smooth in a neighborhood of $(x_0, t_0,
  \mu_0)$.
  
  First, assume in addition that there is a chart $(U, \psi)$ of $M$ such that
  $\varphi_{\mu_0} (x_0, t) \in U$ for all $t$ between $0$ and $t_0$. In this
  case. the assertion follows from the smooth dependence of solutions of
  ordinary differential equations on parameters. For example, we can apply \cite{ODECoddington}
  Theorem 7.4 (p.29), Theorem 7.5 and the remarks that follows.
  
  Now we consider the general case. Without loss of generality, let us assume
  $t_0 \geqslant 0$. There are charts $\{ (U_i, \psi_i), 1 \leqslant i
  \leqslant l \}$ of $M$ for some $l$ and a subdivision $0 = s_0 < s_1 < s_2 <
  \cdots < s_l = t_0$ of $[0, t_0]$ such that $\varphi_{\mu_0} (x_0, t) \in
  U_i$ for $t \in [s_{i - 1} .s_i], 1 \leqslant i \leqslant l$. By the above
  special case, $\varphi_{\mu} (x, s_1)$ is defined and smooth on an open
  neighborhood of $(x_0, \mu_0)$ in $M \times J$. Suppose the same can be said
  about $\varphi_{\mu} (x, s_{i - 1})$ for some $i$. The property of flows
  guarantees that $\varphi_{\mu} (x, t) = \varphi_{\mu} (\varphi_{\mu} (x,
  s_{i - 1}), t - s_{i - 1})$ wherever the two sides are defined. Applying the
  above special case, we conclude that $\varphi_{\mu} (x, s_i)$ is defined if
  $(\varphi_{\mu} (x, s_{i - 1}), \mu)$ is in certain neighborhood of
  $(\varphi_{\mu_0} (x_0, s_{i - 1}), \mu_0)$, and $\varphi_{\mu} (x, s_i)$
  depends smoothly on $(\varphi_{\mu} (x, s_{i - 1}), \mu)$. Thus
  $\varphi_{\mu} (x, s_i)$ is defined and smooth on an open neighborhood of
  $(x_0, \mu_0)$ in $M \times J$. By induction, we see $\varphi_{\mu} (x,
  t_0)$ has this property as well. Finally, the equation $\varphi_{\mu} (x, t)
  = \varphi_{\mu} (\varphi_{\mu} (x, t_0), t - t_0)$ and the conclusion of the
  special case implies $\varphi_{\mu} (x, t)$ is defined and smooth in a
  neighborhood of $(x_0, t_0, \mu_0)$.
\end{proof}

\section{Proof of Theorem \ref{Main}}

The proof is divided into six steps. In each step, we show the diffeomorphism obtained at the end of the previous step can be isotoped to one with stronger properties.

\subsection{The First Step}

\begin{theorem} \label{1st}
  Let $f$ be a self-diffeomorphism of $\mathcal{M}= D^2 \times S^1$ such that
  $f_{| \partial \mathcal{M}} = \tmop{id}$. Then $f$ is ambient isotopic rel
  $\partial \mathcal{M}$ to a diffeomorphism $f'$ of $\mathcal{M}$ such that
  there exist $0<r_0<1$ with
  \begin{equation}
    f' ((1 - r) z_1, z_2) = ((1 - r) z_1, e^{r i} z_2), 0 \leqslant r
    \leqslant r_0
  \end{equation}
  for any $z_1 \in \partial D^2, z_2 \in S^1$. Here we are regarding $D^2$ and
  $S^1$ as subsets of $\mathbb{C}$, and multiplications in (1) are that of
  complex numbers.
\end{theorem}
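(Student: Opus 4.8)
\section*{Proof proposal for Theorem \ref{1st}}

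The plan is to obtain the whole statement from the Isotopy Uniqueness of Collars (Theorem \ref{IUC}), with the twist inserted ``by hand'' through the choice of a suitable target collar. Introduce the \tmem{radial collar}
\[ C \of \partial\mathcal{M}\times[0,1)\to\mathcal{M}, \qquad C(z_1,z_2,r)=((1-r)z_1,z_2), \]
a smooth embedding that is the identity on $\partial\mathcal{M}\times\{0\}$; the point $((1-r)z_1,z_2)$ appearing in the statement is exactly $C(z_1,z_2,r)$. So the desired conclusion is precisely that $f'$ carries $C$ to the \tmem{twisted radial collar}
\[ C' \of \partial\mathcal{M}\times[0,1)\to\mathcal{M}, \qquad C'(z_1,z_2,r)=((1-r)z_1,e^{ri}z_2) \]
on $\partial\mathcal{M}\times[0,r_0]$, for some small $r_0>0$.

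The first thing I would check is that $f\circ C$ and $C'$ are genuine collars of $\partial\mathcal{M}$ in $\mathcal{M}$, in the variant defined on $[0,1)$ rather than $[0,+\infty)$. For $f\circ C$ this holds because $f$ is a diffeomorphism fixing $\partial\mathcal{M}$ pointwise: $f\circ C$ is a smooth embedding, it is the identity on $\partial\mathcal{M}\times\{0\}$, and the curves $t\mapsto f(C(x,t))$, $x\in\partial\mathcal{M}$, point into $\tmop{Int} \mathcal{M}$ at $t=0$ since $df$ carries inward-pointing vectors to inward-pointing vectors. For $C'$ it holds because $C'$ is $C$ followed by the diffeomorphism $((1-r)z_1,z_2)\mapsto((1-r)z_1,e^{ri}z_2)$ of $\mathcal{M}$, which is the identity on $\partial\mathcal{M}$. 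Since $\partial\mathcal{M}=S^1\times S^1$ is compact and $\mathcal{M}$ is connected, the second part of Theorem \ref{IUC} — in the variant of the Remark immediately following it, with $[0,+\infty)$ replaced by $[0,1)$ — applies to the pair of collars $f\circ C$ and $C'$ (with $f\circ C$ and $C'$ in the roles of $C_0$ and $C_1$): for any $r_0\in(0,1)$ there is an ambient isotopy $G$ of $\mathcal{M}$ rel $\partial\mathcal{M}$ with $G_1\circ(f\circ C)=C'$ on $\partial\mathcal{M}\times[0,r_0]$.

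I would then set $f'\assign G_1\circ f$. The map $(x,t)\mapsto G_t(f(x))$ is an isotopy from $f$ to $f'$; it is ambient, being realized by the ambient isotopy $G$ itself, and it is stable on $\partial\mathcal{M}$ since $G$ is rel $\partial\mathcal{M}$ and $f|_{\partial\mathcal{M}}=\tmop{id}$. Hence $f$ is ambient isotopic rel $\partial\mathcal{M}$ to $f'$, and for $0\leqslant r\leqslant r_0$ one has $f'((1-r)z_1,z_2)=G_1(f(C(z_1,z_2,r)))=C'(z_1,z_2,r)=((1-r)z_1,e^{ri}z_2)$, which is the asserted formula. I do not anticipate a genuine obstacle in this step: the substantive work is already done inside Theorem \ref{IUC} (Section 2), which is invoked here as a black box, and what remains is essentially bookkeeping. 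The two points that deserve a little care are, first, verifying that $f\circ C$ and $C'$ meet exactly the hypotheses under which the proof of Theorem \ref{IUC} runs (in particular, that restricting to collars defined only on $[0,1)$ is harmless, as recorded in the Remark), and second, confirming that left-composing $f$ with the ambient isotopy $G$ keeps the resulting isotopy stable on $\partial\mathcal{M}$. The twist $e^{ri}$ is essentially free here — it is produced simply by taking $C'$, rather than $C$ itself, as the target collar — and it will be convenient in the later steps.
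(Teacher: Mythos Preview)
Your proposal is correct and follows essentially the same approach as the paper: define the standard radial collar $\Phi=C$ and the twisted collar $\Psi=C'$, apply the Isotopy Uniqueness of Collars (Theorem~\ref{IUC}, in the $[0,1)$ variant of the Remark) to the pair $f\circ\Phi$ and $\Psi$, and set $f'=G_1\circ f$. The only differences are cosmetic --- you spell out why $f\circ C$ and $C'$ are collars and why the resulting isotopy is rel $\partial\mathcal{M}$, while the paper leaves these implicit and fixes the specific value $a=\tfrac{1}{2}$ for $r_0$.
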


\begin{proof}
  Define two collars of $\partial \mathcal{M}$ in $\mathcal{M}$ as:
  \[ \Phi : \partial \mathcal{M} \times [0, 1) \rightarrow \mathcal{M}, \Phi
     (z_1, z_2, r) = ((1 - r) z_1, z_2), z_1 \in \partial D^2, z_2 \in S^1, r
     \in [0, 1) \]
  \[ \Psi : \partial \mathcal{M} \times [0, 1) \rightarrow \mathcal{M}, \Psi
     (z_1, z_2, r) = ((1 - r) z_1, e^{r i} z_2), z_1 \in \partial D^2, z_2 \in
     S^1, r \in [0, 1) \]
  Applying Theorem \ref{IUC} to the collars $f \circ \Phi$ and $\Psi$ with $a =
  \frac{1}{2}$, we obtain an ambient isotopy $G$ of $\mathcal{M}$ such that
  $G_1 \circ f \circ \Phi = \Psi$ on $\partial \mathcal{M} \times \left[ 0,
  \frac{1}{2} \right]$. This translates into
 \begin{align}
     &  G_1 \circ f ((1 - r) z_1, z_2) & \nonumber \\
    =& G_1 \circ f \circ \Phi (z_1, z_2, r) & \nonumber\\
    =  &\Psi (z_1, z_2, r) & \nonumber \\
    = & ((1 - r) z_1, e^{r i} z_2) & \nonumber 
 \end{align}
for $r \in \left[ 0, \frac{1}{2} \right]$. Hence $f' = G_1 \circ f$ is the
  desired map.
\end{proof}

\subsection{The Second Step}

For notational convenience, throughout the rest of our paper, we define $P_0 =
 1 \in S^1 \subset \mathbb{C}$, and for any interval $J$, define $A_J
= \{ x \in D^2 | \norm{x} \in J \}$.

Let $r$ be either a non-negative integer or $\infty$, $X, Y$ be $C^r$
manifolds. Denote by $C_s^r (X, Y)$ the collection of $C^r$ maps from $X$ to
$Y$ endowed with the strong Whitney topology (c.f. \cite{Hirsch} p.35).

\begin{theorem} \label{2nd}
  Let $f$ be a self-diffeomorphism of $\mathcal{M}= D^2 \times S^1$ that
  satisfies condition (1) for some $0<r_0<1$. Then $f$ is ambient isotopic rel
  $\partial \mathcal{M}$ to a diffeomorphism $f'$ of $\mathcal{M}$ such that
  $f'$ satisfies (1) for a possibly smaller $r_0>0$ and $f'_{|D^2 \times P_0}$
  is transverse to $D^2 \times P_0$.
\end{theorem}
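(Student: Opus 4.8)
The plan is to correct the failure of transversality of $f|_{D^2\times P_0}$ to $D^2\times P_0$ by a small general-position perturbation of the embedded disk $f(D^2\times P_0)$, realized through an ambient isotopy of $\mathcal M$ supported in $\operatorname{Int}\mathcal M$ away from a boundary collar; composing $f$ with the time-one map of such an isotopy automatically preserves condition~(1) (for a possibly smaller $r_0$) and is ambient isotopic to $f$ rel $\partial\mathcal M$. Write $D:=D^2\times P_0$ and $\Sigma:=f(D)$; since $f$ is a diffeomorphism fixing $\partial\mathcal M$ pointwise, $\Sigma$ is a smoothly, neatly embedded disk with $\partial\Sigma=\partial D$. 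The first point to establish is that $f|_D$ is \emph{already} transverse to $D$ near $\partial D$: substituting condition~(1) into $f$, a point $((1-r)z_1,P_0)$ with $0\le r\le r_0$ is sent to $((1-r)z_1,e^{ri})$, which lies in $D=D^2\times P_0$ only for $r=0$, and a one-line computation of $df$ at such a boundary point gives $\operatorname{im}d(f|_D)+TD=T\mathcal M$ there. Hence $f|_D$ is transverse to $D$ on the open collar $A_{(1-r_0,1]}\times P_0$ of $\partial D$, and it suffices to fix transversality over the compact disk $K:=A_{[0,1-r_1]}\times P_0$ for a fixed $r_1\in(0,r_0)$; note $f(K)$ is a compact subset of $\operatorname{Int}\mathcal M$, so $f(K)\subset A_{[0,1-r_2)}\times S^1$ for some $r_2\in(0,r_0)$.

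Next I would apply the relative Thom transversality theorem to the map $f|_D:D\to\mathcal M$ and the submanifold $D\subset\mathcal M$, keeping the map fixed on the closed set $A_{[1-r_1,1]}\times P_0$ (on a neighborhood of which transversality already holds): this yields a map $g:D\to\mathcal M$, arbitrarily $C^\infty$-close to $f|_D$, equal to $f|_D$ on $A_{[1-r_1,1]}\times P_0$, with $\{x:g(x)\ne f(x)\}\subset K$ and $g$ transverse to $D$. For $g$ sufficiently close to $f|_D$ it is again an embedding, joined to $f|_D$ by a smooth isotopy of embeddings $\Gamma:D\times I\to\mathcal M$ — the geodesic path $\Gamma_t(x)=\exp_{f(x)}\!\big(t\,\exp_{f(x)}^{-1}g(x)\big)$ for a fixed metric on $\mathcal M$ — with $\Gamma_0=f|_D$, $\Gamma_1=g$, each $\Gamma_t=f|_D$ on $A_{[1-r_1,1]}\times P_0$, $\{x:\Gamma_t(x)\ne f(x)\text{ for some }t\}\subset K$, and (taking $\Gamma$ close enough) $\Gamma_t(K)\subset A_{[0,1-r_2)}\times S^1$ for all $t$.

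To turn $\Gamma$ into an ambient isotopy I would pass to the double $D\mathcal M$, a closed $3$-manifold with $D\subset\mathcal M\subset D\mathcal M$, since Theorem~\ref{IET} does not apply to $\mathcal M$ directly; then apply Corollary~\ref{CIET} to the isotopy $\Gamma$ of the compact submanifold-with-boundary $D$. Because the track of $\Gamma$ differs from $\Sigma\times I$ only over $K$, the vector field supplied by Lemma~\ref{VF} and the proof of Theorem~\ref{IET} may be cut off to be supported in an arbitrarily small neighborhood of $\widehat{\Gamma}(K\times I)\subset(A_{[0,1-r_2)}\times S^1)\times I$, producing an ambient isotopy $G$ of $D\mathcal M$ with $G_t\circ f|_D=\Gamma_t$ and $\operatorname{supp}G\subset A_{[0,1-r_3]}\times S^1$ for some $r_3\in(0,r_2)$. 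In particular $G$ is the identity on a neighborhood of $\partial\mathcal M$, hence restricts to an ambient isotopy of $\mathcal M$ rel $\partial\mathcal M$ that is stable on $A_{[1-r_3,1]}\times S^1$.

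Setting $f':=G_1\circ f$: the map $(x,t)\mapsto G_t\circ f(x)$ is an ambient isotopy (via $G$) from $f$ to $f'$, rel $\partial\mathcal M$; since $G_1=\operatorname{id}$ on $A_{[1-r_3,1]}\times S^1$ and $f$ carries that collar into itself by $((1-r)z_1,z_2)\mapsto((1-r)z_1,e^{ri}z_2)$, $f'$ still satisfies condition~(1) with $r_0$ replaced by $r_3$; and $f'|_D=\Gamma_1=g$ is transverse to $D^2\times P_0$. The step I expect to demand the most care is the middle one: performing the transversality correction \emph{relative} to the collar where $f|_D$ is already transverse while \emph{simultaneously} making it the restriction of a bona fide ambient isotopy of $\mathcal M$ rel $\partial\mathcal M$ — it is the non-emptiness of $\partial\mathcal M$ that forces the detour through the double and Corollary~\ref{CIET}, and the support bookkeeping is what guarantees an honest boundary collar stays fixed so that condition~(1) survives.
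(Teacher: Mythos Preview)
Your argument is correct and follows the same overall strategy as the paper: perturb $f|_{D^2\times P_0}$ to a nearby embedding transverse to $D^2\times P_0$, then realize the perturbation by an ambient isotopy obtained from Lemma~\ref{VF} plus a cutoff, supported away from $\partial\mathcal M$ so that condition~(1) survives on a thinner collar.

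The technical organization differs from the paper in two respects. First, you invoke the \emph{relative} transversality theorem to arrange $g=f|_D$ on the outer annulus $A_{[1-r_1,1]}\times P_0$; this makes the track vector field vanish identically over that annulus, so your cutoff needs only the single support condition $\operatorname{supp}\rho\subset A_{[0,1-r_3]}\times S^1$. The paper instead perturbs $f$ only on the inner disk $A_{[0,1-r_0]}\times P_0$ (without matching $f$ on an annulus), and must then control what happens on the ``middle'' annulus $A_{[1-r_0,1-r_0/2]}$: it does so by the extra magnitude condition $\|\rho G\|<d$ on the cutoff, guaranteeing that the ambient isotopy moves points by less than the distance $d$ from $f(A_{[1-r_0,1-r_0/2]}\times P_0)$ to $D^2\times P_0$, so the perturbed image of the middle annulus never meets $D^2\times P_0$ at all. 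Your use of relative transversality trades that displacement estimate for a cleaner support argument. Second, your detour through the double $D\mathcal M$ is unnecessary: since $\Gamma_t(K)\subset\operatorname{Int}\mathcal M$ and the track vector field vanishes over the outer annulus, you can (as the paper does) work entirely in the boundaryless manifold $\operatorname{Int}\mathcal M$ and extend by identity to $\mathcal M$; nothing is lost by doubling, but nothing is gained either.
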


\begin{proof}
  Choose and fix an embedding of $\mathcal{M}$ into $\mathbb{R}^3$. Let $d$ be the (Euclidean) distance between $f(A_{[1 - r_0,1 - \frac{r_0}{2}]} \times P_0)$ and $D^2 \times P_0$.
  Define $f_1$ as the restriction of $f$ to $A_{[0, 1 - r_0]} \times P_0$. There is a neighborhood $U$ of $f_1$ in $C_s^{\infty} (A_{[0, 1 - r_0]} \times P_0, \tmop{Int}
  \mathcal{M})$ consisting of smooth embeddings such that for any $g \in U, x \in A_{[0, 1 - r_0]} \times P_0$, $\norm{f_1(x)-g(x)} < \frac{d}{2}$ and $g(x) \not \in f(A_{[1 - \frac{r_0}{2}, 1]} \times P_0)$. Choose a smaller neighborhood $W$ of $f_1$ such that for any $h \in W, t \in [0,1]$, $(1-t)f_1 + th \in U$. We then take $h_1 \in W$ that is transverse to $\tmop{Int} D^2 \times P_0$, and define an isotopy $H$ by
  \[ H : A_{[0, 1 - r_0]} \times P_0 \times I \rightarrow \tmop{Int} \mathcal{M}, H (a,P_0,
     t) = (1-t)f(a,P_0) + th(a,P_0) \]

  Next we extend this isotopy to an ambient one. Let $\widehat{H}: A_{[0, 1 -
  r_0]} \times P_0 \times I \rightarrow \tmop{Int} \mathcal{M} \times I$ be the track of
  $H$. Apply Lemma \ref{VF} to the isotopy $H \circ (f^{- 1} \times \tmop{id}_I)$ of
  $f (A_{[0, 1 - r_0]} \times P_0)$ in $\tmop{Int} \mathcal{M}$, we get a
  compactly supported vector field $X$ on $\tmop{Int} \mathcal{M} \times I$
  that equals to the tangent vectors of curves $t \mapsto \widehat{H} (a, P_0, t)$ on
  $\tmop{Im} \widehat{H}$. Composing $X$ with the natural projection of tangent
  bundles from $T (\tmop{Int} \mathcal{M} \times I)$ onto $T (\tmop{Int}
  \mathcal{M})$, we obtain a compactly supported time dependent vector field
  $G$ on $\tmop{Int} \mathcal{M}$. By definition, for $(H (a, P_0, t), t) \in
  \widehat{H} (A_{[0, 1 - r_0]} \times P_0 \times I)$, the (Euclidean) norm of $G (H (a, P_0, t), t) =
  \frac{\partial}{\partial t} H (a, P_0, t) = h(a, P_0) - f(a, P_0)$ is less than $\frac{d}{2}$.
  
  Let $\rho : \tmop{Int} \mathcal{M} \times I \rightarrow I$ be a smooth
  function such that
  
  (i) $\rho$ is constantly 1 on $\tmop{Im} \widehat{H}$
  
  (ii) $\rho$ is constantly 0 on $f (A_{[1 - \frac{r_0}{2}, 1]} \times P_0) \times I$
  
  (iii) $\norm{ \rho (x, t) G (x, t) } < d$ for
  any $(x, t) \in \tmop{Int} \mathcal{M} \times I$
  
  Condition (i) and (ii) are possible since $f (A_{[1 - \frac{r_0}{2}, 1]} \times P_0)  \cap 
   \tmop{Im} H = \varnothing$ implies $(f (A_{[1 - \frac{r_0}{2}, 1]} \times P_0)
  \times I) \cap \tmop{Im} \widehat{H} = \varnothing$.
  
  The time dependent vector field $\rho G$ generates a compactly supported
  ambient isotopy $\Phi$ of $\tmop{Int} \mathcal{M}$. We can thus extend
  $\Phi$ by $\tmop{identity}$ to an ambient isotopy rel $\partial \mathcal{M}$
  of $\mathcal{M}$, which we still denote by $\Phi$. Since $\rho G = G$ on
  $\tmop{Im} \widehat{H}$, the isotopy $\Phi$ extends $H \circ (f^{- 1} \times
  \tmop{id}_I)_{| f(A_{[0, 1 - r_0]} \times P_0) \times I}$. Condition (ii) above ensures that $\Phi$ is stable on $f
  (A_{[1 - \frac{r_0}{2}, 1]} \times P_0)$. Finally, $\Phi_1$ moves any point by a
  distance less than $d$, therefore $\Phi_1 \circ f (A_{[1 - r_0, 1 - \frac{r_0}{2}]}
  \times P_0) \cap (D^2 \times P_0) = \varnothing$.
  
  Set $f'=\Phi_1 \circ f$. The transversality of $f'_{|D^2 \times P_0}$ to $D^2 \times
  P_0$ follows from the observation that $\Phi_1 \circ f = H_1$ on $A_{[0, 1 - r_0]} \times P_0$ while $\Phi_1 \circ f
  = f$ on $A_{[1 - \frac{r_0}{2}, 1]} \times P_0$.
\end{proof}

\subsection{The Third Step}

We are now ready to proceed with the third step.

\begin{theorem} \label{3rd}
  Let $f$ be a diffeomorphism of $\mathcal{M}= D^2 \times S^1$ such that $f$
  satisfies (1) for some $r_0 > 0$ and $f_{|D^2 \times P_0}$ is transverse to
  $D^2 \times P_0$., then $f$ is ambient isotopic rel $\partial \mathcal{M}$
  to a diffeomorphism $f'$ such that $f'$ satisfies (1) for possibly smaller
  $r_0$ and $f' (D^2 \times P_0) \cap (D^2 \times P_0) = \partial D^2 \times
  P_0$.
\end{theorem}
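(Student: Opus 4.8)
I would argue by an innermost‑disk induction, removing the superfluous components of $f(D^2\times P_0)\cap(D^2\times P_0)$ one at a time by ambient isotopies supported away from the boundary collar, so that condition (1) survives (after shrinking $r_0$). Write $A=D^2\times P_0$ and $B=f(D^2\times P_0)$; these are properly embedded $2$-disks in $\mathcal{M}$ with common boundary $\partial D^2\times P_0$. Since $f$ satisfies (1) with, say, $r_0<2\pi$, the image $f(A_{[1-r_0,1]}\times P_0)$ is the ``helical ramp'' $\{((1-r)z_1,e^{ri})\}$, which meets $A$ only in $\partial D^2\times P_0$; hence $B\cap A=(\partial D^2\times P_0)\sqcup\mathcal{C}$, and by the transversality hypothesis $\mathcal{C}$ is a compact $1$-manifold, i.e. a finite disjoint union of circles lying in $\tmop{Int}(D^2)\times P_0\subset\tmop{Int}\mathcal{M}$. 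I would also record that $\tmop{Int}\mathcal{M}\cong\mathbb{R}^2\times S^1$ is irreducible: a smooth embedded $2$-sphere lifts to the universal cover $\mathbb{R}^3$ (its $\pi_1$ is trivial), there bounds a ball by the smooth Schoenflies theorem, and that ball is disjoint from its nontrivial deck translates (a nesting/volume argument), so it projects to a ball downstairs.

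\textbf{The inductive step.} Induct on $\#\mathcal{C}$. If $\#\mathcal{C}=0$ we are done; otherwise choose $\gamma\in\mathcal{C}$ innermost on $B$, bounding a disk $E\subset\tmop{Int}B$ with $\tmop{Int}E\cap A=\varnothing$, and let $F\subset\tmop{Int}A$ be the disk that $\gamma$ cuts off in $A$ missing $\partial A$. After rounding the corner along $\gamma$ inside a thin neighborhood, $S=E\cup F$ is a smooth embedded $2$-sphere in $\tmop{Int}\mathcal{M}$, so it bounds a ball $\mathcal{B}$. Because $A\setminus\tmop{Int}F$ is connected and meets $\partial\mathcal{B}$ only along $\gamma$, it cannot be trapped in the compact ball $\mathcal{B}\subset\tmop{Int}\mathcal{M}$, so $A\cap\mathcal{B}=F$. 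Thicken $\mathcal{B}$ a little across $A$ to a ball $\mathcal{B}^{+}$ with $A\cap\mathcal{B}^{+}=F$ and $F$ now neatly embedded in $\mathcal{B}^{+}$; its boundary sphere is split by $\gamma$ into $E$ and a disk $E^{*}$ lying on the far side of $A$. Applying Lemma \ref{Compress} to $(\mathcal{B}^{+},F,E^{*},U)$, with $U$ a thin tubular neighborhood of $\gamma$ in $\partial\mathcal{B}^{+}$, produces a sub-ball $\mathcal{B}'\subset\mathcal{B}^{+}$ with $\mathcal{B}'\cap F=\varnothing$ — hence $\mathcal{B}'\cap A=\varnothing$ — together with a compressing isotopy of $\mathcal{B}^{+}$, stable near $E^{*}\setminus U$, that carries $\mathcal{B}^{+}$ onto $\mathcal{B}'$.

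\textbf{Closing the step.} I would upgrade this compression to an ambient isotopy $\Psi$ of $\mathcal{M}$ supported in a slightly larger compact ball $\mathcal{B}^{++}\subset\tmop{Int}\mathcal{M}$ (thicken $\mathcal{B}^{+}$ outward near $E$ and $U$, extend the compression to be the identity on $\partial\mathcal{B}^{++}$, then extend by the identity over $\mathcal{M}$). Set $f_1=\Psi_1\circ f$. Then $f_1$ is ambient isotopic to $f$ rel $\partial\mathcal{M}$, and if we shrink $r_0$ so that $f(A_{[1-r_0,1]}\times S^1)$ is disjoint from $\mathcal{B}^{++}$, then $\Psi$ is stable on that set and $f_1$ still satisfies (1). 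Since $f_1(D^2\times P_0)$ agrees with $B$ outside $\mathcal{B}^{++}$ and lies inside $\mathcal{B}'$ (disjoint from $A$) inside $\mathcal{B}^{++}$, we get $f_1(D^2\times P_0)\cap A=(\partial D^2\times P_0)\sqcup\mathcal{C}_1$, where $\mathcal{C}_1$ is obtained from $\mathcal{C}$ by deleting $\gamma$ and any circles of $\mathcal{C}$ it bounded in $F$; this intersection is again transverse (possibly after a final $C^{\infty}$-small perturbation rel $\partial\mathcal{M}$, chosen not to re-create deleted circles), and $\#\mathcal{C}_1<\#\mathcal{C}$. Iterating finitely often yields the desired $f'$, with $r_0$ shrunk by a total finite amount and hence still positive.

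\textbf{Main obstacle.} The real work is ``closing the step'': turning the compression of Lemma \ref{Compress} — which is only an embedding of $\mathcal{B}^{+}$ into itself and does \emph{not} fix all of $\partial\mathcal{B}^{+}$ — into a genuine ambient isotopy of $\mathcal{M}$ that moves $B$, and checking that this creates no new intersection circles and destroys no transversality. One must keep careful track of how $B$ sits inside $\mathcal{B}^{+}$ (it contains $E$ and sprouts small caps near the circles of $\mathcal{C}$ contained in $F$) and engineer the enlargement $\mathcal{B}^{++}$ accordingly; the point of the set $U$ in Lemma \ref{Compress} is precisely to permit motion near $\gamma=\partial F$, so that the surgery genuinely removes $\gamma$ instead of merely pushing a collar of it around. (Lemma \ref{rescale} is convenient here for putting $A$ into a standard position relative to $\partial\mathcal{B}$ near $\gamma$ before invoking Lemma \ref{Compress}.) By contrast, the Schoenflies/irreducibility input and the bookkeeping that keeps condition (1) alive are routine once $r_0$ is shrunk at each stage.
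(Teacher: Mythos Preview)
Your strategy is the paper's: innermost disk on $B=f(D^2\times P_0)$, cap with the sphere $E\cup F$, find a ball, compress via Lemma~\ref{Compress}, extend to an ambient isotopy, and induct. You also correctly flag the hard part as ``Closing the step.'' Where your sketch diverges from the paper is precisely in how that obstacle is resolved, and your proposed resolution has a gap.

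The assertion ``$f_1(D^2\times P_0)$ \dots\ lies inside $\mathcal{B}'$ inside $\mathcal{B}^{++}$'' requires $B\cap\mathcal{B}^{++}\subset\mathcal{B}^{+}$, since $\Psi_1$ only carries $\mathcal{B}^{+}$ into $\mathcal{B}'$ while sending $\mathcal{B}^{++}\setminus\mathcal{B}^{+}$ onto $\mathcal{B}^{++}\setminus\mathcal{B}'$. But $B$ does meet $\mathcal{B}^{++}\setminus\mathcal{B}^{+}$: near $\gamma$ the surface $B$ continues past $E$ on the far side of $A$, and if $F$ contains other circles of $\mathcal{C}$ then $B$ also exits $\mathcal{B}^{+}$ through $E^{*}$ near those circles. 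These pieces land in $\mathcal{B}^{++}\setminus\mathcal{B}'$, a region that straddles $A$, so new intersections with $A$ may be created. Your fallback of a ``final $C^{\infty}$-small perturbation'' does not help: a generic perturbation will not remove circles, and there is no reason it cannot reinstate the ones you just erased.

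The paper avoids this entirely by building the ball differently. Writing $K=\overline{D^2\times P_0-(C\cup D_0)}$ for the complement in $A$ of the innermost disk and a thin collar, it arranges (this is the content of Lemma~\ref{Lemmaforproof} and the subsequent $C^{\infty}$ approximation) that the smooth ball $\widetilde{B}_2$ is \emph{disjoint from $f(K)$}. The ambient extension is then produced by the vector-field method (Lemma~\ref{VF}) with a cutoff $\xi$ chosen so that the resulting isotopy $\widetilde{H}$ is stable on $f(K)\cup(f(C)\setminus\widetilde{B}_2)$. Consequently only $f(D_0\cup C)$ is moved, exactly the single circle $S=\gamma$ is removed, and transversality survives because the remaining intersection lies in the fixed set $f(K)$; no perturbation is needed. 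So the fix is not to enlarge the ball around $\mathcal{B}^{+}$, but to shrink (and carefully shape) it so that it misses the rest of $B$.
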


\begin{proof}
  If $f (D^2 \times P_0) \cap (D^2 \times P_0) = \partial D^2 \times P_0$,
  there is then nothing to prove. So we may assume $f (D^2 \times P_0) \cap
  (D^2 \times P_0)$ is a finite union of more than one circles. Choose a
  circle $S$ in this family that is not inside any other with respect to $f
  (D^2 \times P_0)$. Then $S$ bounds a disk $D_0' = f (D_0)$ in $f (D^2 \times
  P_0)$, where $D_0$ is a disk in $D^2 \times P_0$. On the other hand, $S$
  bounds a disk $D_0''$ in $D^2 \times P_0$. Denote the union $D_0' \cap D_0''$ by $S_0$. Then $S_0 \cap (D^2 \times P_0)=D_0''$. Intuitively, $S_0$ is sphere touching
  exactly one side of $D^2 \times P_0$. To be precise, for an interval $J$,
  define $T_J = \{ (x, e^{t i}) \in D^2 \times S^1 |t \in J \}$. Then there
  exist $a \in (0, 2 \pi)$ such that either $S_0 \cap T_{[- a, 0)} =
  \varnothing$ or $S_0 \cap T_{(0, a]} = \varnothing$. Without lost of
  generality, we assume $S_0 \cap T_{[- a, 0)} = \varnothing$ (the property
  (1) plays no role in the proof of this theorem, so the two cases are
  symmetric).
  
  We wish to push $D_0'$ to the ``negative side'' of $D^2 \times P_0$ without
  affecting other circles in $f (D^2 \times P_0) \cap (D^2 \times P_0)$. This
  will reduce the number of circles by one.
  
  The first step to achieve this is to position $S_0$ in the interior of a
  suitably chosen $C^1$ 3-ball. By our transversality assumption, we may
  choose a closed collar $C$ of $\partial D_0$ in $D^2 \times P_0 - \tmop{Int}
  D_0$ with the following properties:
  
  (i) $f (C) \cap (D^2 \times P_0) = S$
  
  (ii) $f (C) \subset T_{(- a / 2, 0]}$
  
  (iii) The subspace $K \assign \overline{D^2 \times P_0 - (C \cup D_0)}$ is
  homeomorphic to an annulus.
  
  Condition (iii) is made possible by the Annulus Theorem. Note that $f (K)
  \cap D_0' = \varnothing$ and $f (K) \cap D_0'' = \varnothing$ by our choice.
  
  The disk $D_0'$ is a neat submanifold of $\tmop{Int} \mathcal{M}- T_{[- a,
  0)}$. Hence we can take a closed tubular neighborhood $Q:D_0' \times [-1,1] \rightarrow \tmop{Int} \mathcal{M}- T_{[- a,
  0)}$ of $D_0'$ in
  $\tmop{Int} \mathcal{M}- T_{[- a, 0)}$ such that $Q_{|\partial D_0' \times [-1,1]}$ is a (closed) tubular neighborhood of $\partial D_0'$ in $\tmop{Int}D^2 \times P_0$ (c.f.
  \cite{Hirsch} Theorem 4.6.4.). We may also assume that $Q$ is disjoint from $f
  (K)$. Without loss of generality, we may assume $Q(D_0' \times \{ 1 \}) \cap D_0'' = \varnothing$ while $Q(D_0' \times \{ -1 \}) \cap D_0'' \neq \varnothing$. Define $D_1':=Q(D_0' \times {1})$. It is not hard to
  observe that $D_1' \cap (D^2 \times P_0) = \partial D_1'$, and that $D_1'$ is transverse to $D^2 \times P_0$. Define $D_1'':=D_0'' \cup Q(\partial D_0' \times [0,1])$. Then $D_1''$ is the disk bounded inside $\partial D_1'$ with respect to $D_2 \times P_0$. Note that $(D_1' \cup D_1'') \cap f(K) \subseteq (Q \cup D_0'') \cap f(K) = \varnothing$.
  
  Now we extend $D_1'$ to a $C^1$ sphere.
  
  {\tmstrong{Claim}}: There exist a $C^1$ embedding $i_1 : S^2 \rightarrow
  \tmop{Int} \mathcal{M}$ such that
  
  (i) $i_1 (D_+) = D_1'$
  
  (ii) $i_1 (\tmop{Int} D_-) \subset \tmop{Int} \mathcal{M} \cap T_{(- a / 4,
  0)}$
  
  (iii) $\tmop{Im} i_1$ bounds a topological 3-ball $\widetilde{B}_1$ that is disjoint from
  $f (K)$
  
  (iv) $S_0 \subset \tmop{Int} \widetilde{B}_1$
  
  where $D_+, D_-$ are the upper and lower hemisphere of $S^2$ respectively.
  
  {\tmstrong{Proof of the claim}}: The above claim is intuitively obvious, yet
  its proof is rather long. So we postpone that proof and make it a separate
  lemma below (Lemma \ref{Lemmaforproof}).
  
  Note that condition (i) and (ii) above implies $\tmop{Im} i_1 \subset
  \tmop{Int} \mathcal{M}- T_{[- a, - a / 4]}$.
  
  Having the desired $i_1$, we shall approximate it with a $C^{\infty}$
  embedding $i_2 : S^2 \rightarrow \tmop{Int} \mathcal{M}$. In the space $C^1
  (S^2, \tmop{Int} \mathcal{M})$ equipped with Whitney topology, the set of
  embeddings is open and the set of $C^{\infty}$ maps is dense. Hence we can
  find a $C^{\infty}$ embedding $i_2$ that is arbitrarily close to $i_1$. It
  should be close enough so that
  
  (i) The $C^\infty$ ball $\widetilde{B}_2$ bounded by $\tmop{Im} i_2$ is disjoint with
  $f (K) \cup (D^2 \times \{ e^{- a i / 2} \})$
  
  (ii) $\tmop{Im} i_2 \cap (D^2 \times P_0)$ is a smooth circle $S'$
  
  (iii) $S_0 \subset \tmop{Int} \widetilde{B}_2$
  
  Condition (i) is easy to guarantee. In order to achieve (ii), we identify
  $T_{(-\frac{a}{2},\frac{a}{2})}$ naturally with
  $\tmop{Int} D^2 \times (-\frac{a}{2},\frac{a}{2})$ by identifying $(z, e^{t
  i})$ with $(z, t)$. Let $p : \tmop{Int} D^2 \times (-\frac{a}{2},\frac{a}{2}) \rightarrow (-\frac{a}{2},\frac{a}{2})$ be the projection onto the last coordinate.  Consider the map $N : S^1 \times (- 1, 1)
  \rightarrow S^2$ defined by $N (z, t) = \left( \sqrt{1 - t^2} \frac{z}{\norm{ z
  }}, t \right)$. Choose $\delta > 0$ small enough so that $i_1 \circ N (S^1 \times [- \delta, \delta]) \subset T_{(-\frac{a}{2},\frac{a}{2})}$ and that
  $\frac{\partial}{\partial t} p \circ i_1 \circ N (z, t)$ is positive for $(z, t)
  \in S^1 \times [- \delta, \delta]$. We can require $i_2$ to map $N (S^1
  \times \{ \delta \})$ (resp. $N (S^1 \times \{ - \delta \})$) into
  $\tmop{Int} D^2 \times (0, a / 2)$ (resp. $\tmop{Int} D^2 \times (-
  a / 2, 0)$), and require that $\frac{\partial}{\partial t} p \circ i_2 \circ N (z,
  t) > 0$ on $S^1 \times [- \delta, \delta]$. By
  monotonicity, for each $z \in S^1$, $i_2 \circ N (\{ z \} \times [- \delta,
  \delta]) \cap (\tmop{Int} D^2 \times \{ 0 \})$ is a singleton, and the
  correspondence $z \mapsto i_2 \circ N (\{ z \} \times [- \delta, \delta])
  \cap (\tmop{Int} D^2 \times \{ 0 \})$ is a smooth embedding by the Implicit
  Function Theorem. If we require further that $i_2 (S^2 - N (S^1 \times (-
  \delta, \delta))) \cap (\tmop{Int} D^2 \times \{ 0 \}) = \varnothing$, we
  ensure (ii). Finally, we may require that $i_1$ and $i_2$ are close enough such that with respect to some embedding of $\mathcal{M}$ in $\mathbb{R}^3$, the linear homotopy from $i_1$ to $i_2$ takes place in $\tmop{Int} \mathcal{M} - S_0$. Hence $S_0 \subset \tmop{Int} \widetilde{B}_2$.
  
  As a consequence of the conditions (i) and (iii) of $i_2$, the circle $S'$
  is disjoint from both $f (K)$ and $D_0' = f (D_0)$. In fact, it is also
  disjoint from $f (C)$, for $f (C) \cap (D^2 \times P_0) = \partial D_0'
  \subset S_0$ by our choice. Thus $S'$ is disjoint from the entire $f (D^2
  \times P_0)$.
  
  Now we are prepared to apply Lemma \ref{Compress}. Identify $\widetilde{B}_2$ with
  $B^3$. Let $D$ be the disk bounded by $S'$ in $D^2 \times P_0$. Set
  $\widetilde{D} = \partial \widetilde{B}_2 \cap T_{(- a / 2, 0]}$. Let $U$ be a
  tubular neighborhood of $S'$ in $\partial \widetilde{B}_2$ such that $U \cap
  f (D^2 \times P_0) = \varnothing$. Recall that we require $f (C) \subset
  T_{(- a / 2, 0]}$, so $f (C) \cap \partial \widetilde{B}_2 \subset \widetilde{D}
  - U$. According the Lemma \ref{Compress}, there exist a smooth isotopy $H :
  \widetilde{B}_2 \times I \rightarrow \widetilde{B}_2$ with $H_0 = \tmop{id},
  H_1 (\widetilde{B}_2) \cap (D^2 \times P_0) = \varnothing$ and $H$ is stable
  on a neighborhood $W$ of $\widetilde{D} - U$ in $\widetilde{B}_2$.
  
  Next we shall extend this isotopy to a compactly supported ambient isotopy
  on a neighborhood of $\widetilde{B}_2$. Choose a neighborhood $O$ of
  $\widetilde{B}_2$ in $\tmop{Int} \mathcal{M}$ such that $O$ is disjoint from
  a neighborhood of $f (K)$. By Lemma \ref{VF}, there is a compactly supported
  vector field $X$ on $O \times I$ that equals to the tangent vectors of $t
  \mapsto \widehat{H} (x, t)$ on $\widehat{H} (\widetilde{B}_2 \times I)$, where $\widehat{H}$ stands for the track of $H$. Composing
  $X$ with the natural projection $T (O \times I) \rightarrow T O$, we obtain
  a compactly supported time dependent vector field $G$. The stability of $H$
  on $W$ implies $X \equiv 0$ on $W \times I$, hence $G \equiv 0$ on $W \times
  I$.
  
  We then look for a smooth function $\xi : O \times I \rightarrow I$ such
  that:
  
  (i) $\xi G = G$ on $\widehat{H} (\widetilde{B}_2 \times I)$
  
  (ii) $\xi G = 0$ on $E \times I$, where $E = f (C) \cap O - \tmop{Int}
  \widetilde{B}_2$
  
  Since $f (C) \cap \partial \widetilde{B}_2 \subset \widetilde{D} - U$, we know
  that $E \cap \widetilde{B}_2 \subset \widetilde{D} - U \subset W$. Thus we can
  take a neighborhood $V$ of $E$ in $O$ with $V \cap \widetilde{B}_2 \subset
  W$. The set $f (C) \cap \tmop{supp}G - \tmop{Int} \widetilde{B}_2$ is
  compact and lies in $V$. Let $\xi : O \times I \rightarrow I$ be a smooth
  function with $\xi = 0$ on $(f (C) \cap \tmop{supp}G - \tmop{Int}
  \widetilde{B}_2) \times I$ and $\xi = 1$ outside $V \times I$. Condition
  (ii) is easy to check. Condition (i) is certainly true outside $V \times I$,
  while $\widehat{H} (\widetilde{B}_2 \times I) \cap (V \times I) \subset
  (\widetilde{B}_2 \times I) \cap (V \times I) \subset W \times I$. Since $G$
  vanishes on $W \times I$, condition (i) is valid on all of $\widehat{H}
  (\widetilde{B}_2 \times I)$.
  
  Now $\xi G$ generates a compactly supported ambient isotopy of $O$, which
  can be extended to one of $\mathcal{M}$ by identity. Denote this isotopy by
  $\widetilde{H}$. Then $\widetilde{H}$ is stable in a neighborhood of $\partial
  \mathcal{M}$, hence $f' \assign \widetilde{H}_1 \circ f$ satisfy the
  property (1) for a possibly smaller $r_0 > 0$. Since $\widetilde{H}$ extends
  $H$, we have $\widetilde{H}_1 (\widetilde{B}_2) \cap (D^2 \times P_0) =
  \varnothing$. The condition (ii) of $\xi G$ implies $\widetilde{H}$ is stable on
  $f (C) - \widetilde{B}_2$. Our requirement of $O$ guarantees the stability
  of $\widetilde{H}$ on a neighborhood $f (K)$. Thus $\widetilde{H}$ is stable on $f
  (K) \cup (f (C) - \widetilde{B}_2)$. Since $f (K) \cap \widetilde{B}_2 =
  \varnothing$, we have $f (K) \cup (f (C) - \widetilde{B}_2) = f (K) \cup f
  (C) - \widetilde{B}_2$.
  
  We can check that $f'$ reduces the number of circles in the intersection by
  one, since we have
  \begin{align*}
    & f' (D^2 \times P_0) \cap (D^2 \times P_0) & &\\
    = & \widetilde{H}_1 \circ f (D^2 \times P_0) \cap (D^2 \times P_0)& &\\
    = & \widetilde{H}_1 (f (D^2 \times P_0) - \widetilde{B}_2) \cap (D^2
    \times P_0) & &\tmop{for} \widetilde{H}_1 (\widetilde{B}_2) \cap (D^2
    \times P_0) = \varnothing \\
    = & \widetilde{H}_1 (f (K) \cup f (C) - \widetilde{B}_2) \cap (D^2 \times
    P_0) & & \tmop{for} f (D_0) = D_0' \subset B_0 \subset \tmop{Int}
    \widetilde{B}_2\\
    = & (f (K) \cup f (C) - \widetilde{B}_2) \cap (D^2 \times P_0) & & \tmop{by}
    \tmop{stability} \tmop{of} \widetilde{H} \tmop{on} f (K) \cup f (C) -
    \widetilde{B}_2 \\
    = & (f (K) \cup (f (C) - \widetilde{B}_2) ) \cap (D^2 \times P_0) & &\\
    = & f (K) \cap (D^2 \times P_0) & & \tmop{for} f (C) \cap (D^2 \times P_0) =
    S \subset \widetilde{B}_2 \\
    = & f (D^2 \times P_0) \cap (D^2 \times P_0) - S & & \tmop{for} f (C \cup D_0) \cap (D^2 \times P_0) = S  \\    
  \end{align*}
  Finally, since $f' (D^2 \times P_0) \cap (D^2 \times P_0) = f (D^2 \times
  P_0) \cap (D^2 \times P_0) - S \subset f (K)$, the stability of $\widetilde{H}$
  near $f (K)$ ensures transversality of $f'_{|D^2 \times P_0}$ with $D^2
  \times P_0$. The theorem now follows by induction.
\end{proof}

\begin{lemma} \label{Lemmaforproof}
  The claim in the proof of Theorem \ref{3rd} is valid.
\end{lemma}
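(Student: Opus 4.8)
The plan is to realize $\widetilde{B}_1$ as the $3$-ball bounded by $D_1'$ together with a ``cap'' disk $\Delta$ built on the negative side of $D^2\times P_0$, and to let $i_1$ be a $C^1$ parametrization of $D_1'\cup\Delta$ carrying the upper hemisphere $D_+$ onto $D_1'$.

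First I would fix $c\in(0,2\pi-a)$ large enough that $D_0'$, $D_1'=Q(D_0'\times\{1\})$ and $D_1''$ all lie in $T_{[0,c)}$, and set $H^+:=\overline{T_{[0,c]}}$; after smoothing its corners this is a smooth $3$-ball with $D^2\times P_0\subset\partial H^+$, and $D_1'$ is a neatly embedded smooth $2$-disk in $H^+$ with $\partial D_1'\subset D^2\times P_0$. Since a neatly embedded $2$-disk cuts a $3$-ball into two $3$-balls (a standard consequence of the Schoenflies theorems recalled in Section~2, together with the unknottedness of smooth circles in $S^2$), $D_1'$ cuts off a $3$-ball $B^*$ with $\partial B^*=D_1'\cup D_1''$, where $D_1''$ is the disk $\partial D_1'$ bounds in $D^2\times P_0$. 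From the construction of the tubular neighbourhood $Q$ one checks $D_0'-S\subset\tmop{Int}B^*$ and $D_0''\subset\tmop{Int}D_1''$.

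Next comes the cap $\Delta$, which is the real work. Work in the collar $\mathcal{N}\cong\tmop{Int}D^2\times(-\tfrac{a}{2},\tfrac{a}{2})$ used in the proof of Theorem~\ref{3rd}. Since $D_1'$ is transverse to $D^2\times P_0$ along $\partial D_1'$ and lies on the positive side there, the germ of $D_1'$ along $\partial D_1'$ extends to the germ of a smooth surface crossing $D^2\times P_0$ transversally, and the part of that germ lying at $t<0$ is a smooth annular collar of $\partial D_1'$ inside $T_{(-a/4,0]}\cup(D^2\times P_0)$. I would let $\Delta$ follow this germ near $\partial D_1'$ and, further in, be a graph $t=-\mu(z)$ over $D_1''$ (or a slightly larger disk) with $0<\mu<\tfrac{a}{4}$ in the interior, the $1$-jets chosen so that the two pieces glue together $C^1$-smoothly; this can be arranged keeping $\Delta$ inside any prescribed neighbourhood of $D_1''\cup\partial D_1'$ in $\mathcal{M}$. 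Then $D_1'\cup\Delta$ is a $C^1$ $2$-sphere contained in $\tmop{Int}\mathcal{M}$ with $\tmop{Int}\Delta\subset T_{(-a/4,0)}$, and a $C^1$ diffeomorphism $S^2\to D_1'\cup\Delta$ carrying $D_+$ onto $D_1'$ and $D_-$ onto $\Delta$ furnishes $i_1$; conditions (i) and (ii), and the remark $\tmop{Im}i_1\subset\tmop{Int}\mathcal{M}-T_{[-a,-a/4]}$, are then immediate.

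Finally, $D_1'\cup\Delta$ bounds $\widetilde{B}_1:=B^*\cup R$, where $R$ is the $3$-ball caught between $D_1''$ and $\Delta$; as $R$ meets $B^*$ exactly in the face $D_1''\subset\partial B^*$, $\widetilde{B}_1$ is again a topological $3$-ball, with $\widetilde{B}_1\cap(D^2\times P_0)=D_1''$ and $\tmop{Int}\widetilde{B}_1\cap(D^2\times P_0)=\tmop{Int}D_1''$. Condition (iv) then follows from $D_0''\subset\tmop{Int}D_1''$, $D_0'-S\subset\tmop{Int}B^*\subset\tmop{Int}\widetilde{B}_1$ and $S\subset\tmop{Int}D_1''$. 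For the disjointness in (iii): $D_1'\cap f(K)=\varnothing$ is given, and a small enough neighbourhood of $D_1''\cup\partial D_1'$ in $\mathcal{M}$ is disjoint from $f(K)$, since $f(K)$ is closed while $D_1''$ and $\partial D_1'$ are compact and miss it; hence $\partial\widetilde{B}_1\cap f(K)=\varnothing$. Because $f(K)$ is connected and meets $D^2\times P_0$ outside $D_1''$ (in the remaining circles of $f(D^2\times P_0)\cap(D^2\times P_0)$, which are present under the standing assumption that more than one circle occurs), while $\tmop{Int}\widetilde{B}_1$ meets $D^2\times P_0$ only inside $D_1''$, it follows that $f(K)\cap\widetilde{B}_1=\varnothing$. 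The only genuinely delicate step is this construction of $\Delta$ near $\partial D_1'$ — turning ``continue $D_1'$ smoothly across $D^2\times P_0$ and cap off below'' into an honest $C^1$ embedded disk with boundary $\partial D_1'$, lying in $T_{(-a/4,0)}$, and avoiding $f(K)$; everything else is bookkeeping with collars and the separation facts already established.
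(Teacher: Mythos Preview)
Your outline is correct and shares the paper's strategy: complete $D_1'$ to a $C^1$ sphere by attaching a cap on the negative side of $D^2\times P_0$, kept close to $D_1''$ and inside $T_{(-a/4,0)}$, then read off (iii) and (iv) from separation and connectedness. The execution, however, differs in two respects worth noting.

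First, for the cap itself the paper is fully explicit: it parametrises a neighbourhood of the equator by $N(z,t)=(\sqrt{1-t^2}\,z/\lVert z\rVert,\,t)$, invokes Lemma~\ref{rescale} to normalise the normal component of the tangent of $i_1\circ N$ at $t=0$ to $1$, and then writes down the extension to $D_-$ in closed form using a succession of bump functions $\rho,\lambda,\tau,\sigma$ and an auxiliary diffeomorphism $\varphi:D^2\to D_2$. Your ``continue the germ of $D_1'$ across $D^2\times P_0$ and cap off by a graph $t=-\mu(z)$'' is exactly the picture behind that computation, but you leave the $C^1$ matching of the germ-piece with the graph-piece as an exercise (and you rightly flag it as the delicate step). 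The paper's formulas are precisely what fills that gap.

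Second, your verification of (iv) is more direct than the paper's. You build $\widetilde{B}_1$ as $B^\ast\cup R$ with $B^\ast$ obtained by Schoenflies inside the auxiliary ball $H^+$, and then locate $D_0'$, $D_0''$ and $S$ inside $\tmop{Int}\widetilde{B}_1$ piece by piece; the paper instead never names $B^\ast$, invokes the topological Schoenflies theorem (via a $C^1$ tubular neighbourhood of $\tmop{Im}i_1$) to get $\widetilde{B}_1$, and then proves (iv) by contradiction using a transversality argument. Your route here is arguably cleaner. For (iii) both arguments use connectedness of $f(K)$ and the fact that it contains $\partial D^2\times P_0$; your phrasing via ``$f(K)$ meets $D^2\times P_0$ outside $D_1''$'' is equivalent.
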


\begin{proof}
  We use the same notations as in the proof of Theorem \ref{3rd}. Define $i_{1 | D_+}$ by choosing a diffeomorphism between $D_+$ and $D_1'$. We will extend it to a $C^1$ embedding $i_1 : S^2 \rightarrow \tmop{Int} \mathcal{M}$ such that $i_1 (\tmop{Int} D_-) \subset \tmop{Int} \mathcal{M} \cap T_{(- a / 4, 0)}$. This would satisfy condition (i) and (ii). The image of the extended $i_1$ is a $C^1$ 2-sphere, thus has a $C^1$ tubular neighborhood (cf. \cite{Hirsch} Chapter 4, Exercise 1). Hence by the Schoenflies Theorem, it bounds a topological 3-ball $\widetilde{B_1}$. If $i_1 (D_-)$ is sufficiently close to the disk $D_1''$, the image of $i_1$ would be disjoint from $f(K)$. This would guarantee condition (iii), for $f(K)$ is path-connected and contains $\partial D^2 \times P_0$. We now carry out this plan.
  
  The correspondence
  $(z, e^{t i}) \mapsto (z, t)$ identifies $T_{[-a / 4, a / 4]}$ with the subspace $\tmop{Int} D^2 \times (- a / 4, a / 4)$ of $\mathbb{R}^3$.
  
  Let $N : S^1 \times (- 1, 1) \rightarrow S^2$ be defined by $N (z, t) =
  \left( \sqrt{1 - t^2} \frac{z}{\norm{z}}, t \right)$. Given $z \in S^1$, the
  tangent vector of the curve $t \mapsto i_1 \circ N (z, t)$ at $t = 0$ is of
  the form $(X_1 (z), X_2 (z), X_3 (z))$. By our choice of $i_1$, $X_3 (z) >
  0$ for all $z \in S^1$. Applying Lemma \ref{rescale} with $X = X_3$, we can reduce the
  problem to the case where $X_3 (z) \equiv 1$.
  
  Define $h_i : S^1 \rightarrow \mathbb{R}, i = 1, 2$ by $i_1 (z) = (h_1 (z),
  h_2 (z), 0)$ for any $z \in S^1 \subset D_+$.
  
  Choose $\varepsilon > 0$ with $4 \varepsilon < a/4$, choose a smooth function $\rho : [- \varepsilon, 0]
  \rightarrow [- \varepsilon, 0]$ with the properties:
  
  (i) $\rho$ is non-decreasing
  
  (ii) $\rho' (0) = 1, \rho (0) = 0$
  
  (iii) $\rho' (- \varepsilon) = 0$
  
  Define $H : S^1 \times [- \varepsilon, 0] \rightarrow \mathbb{R}^3$ by
  \[ H (z, t) = (h_1 (z) + X_1 (z) \rho (t), h_2 (z) + X_2 (z) \rho (t), t) \]
  Then $H_{|S^1 \times \{ 0 \}} = i_{1| S^1}$. For $\varepsilon$ small enough
  we can ensure $\tmop{Im} H \subset \tmop{Int} D^2 \times [- \varepsilon,
  0]$. Since $S^1$ is compact and the collection of embeddings is open in $C^1
  (S^1, \mathbb{R}^2)$, we may further shrink $\varepsilon$ to make $H$ a
  smooth embedding.
  
  For any $z \in S^1$, $\frac{\partial}{\partial t} H (z, t) = (X_1 (z) \rho'
  (t), X_2 (z) \rho' (t), 1)$, so $\frac{\partial}{\partial t}_{|t = 0} H (z,
  t) = (X_1 (z), X_2 (z), 1)$ and $\frac{\partial}{\partial t}_{|t = -
  \varepsilon} H (z, t) = (0, 0, 1)$. This implies $i_1$ can be extended
  by $H$ to a $C^1$ embedding of $D_+ \cup N (S^1 \times [- \varepsilon, 0])$
  into $\tmop{Int} \mathcal{M}$.
  
  The region bounded by $H (S^1 \times \{ - \varepsilon \})$ in $\mathbb{R}^2
  \times \{ - \varepsilon \}$ is a smooth disk $D_2 \times \{ - \varepsilon
  \}$. Since a self-diffeomorphism of $S^1$ extends to one of $D^2$, we may
  choose a diffeomorphism $\varphi : D^2 \rightarrow D_2$ with $\varphi_{|
  \partial D^2} = H_{- \varepsilon}$.
  
  Now choose a smooth function $\lambda : [- 4 \varepsilon, - \varepsilon]
  \rightarrow \mathbb{R}$ such that
  
  (i) $\lambda (t) = t$ if $t \in [- 2 \varepsilon, - \varepsilon]$
  
  (ii) $\lambda (t) \equiv - 4 \varepsilon$ if $t \in [- 4 \varepsilon, - 3
  \varepsilon]$
  
  (iii) $\lambda (t)$ is strictly increasing on $[- 3 \varepsilon, -
  \varepsilon]$
  
  and a smooth function $\tau : [- 4 \varepsilon, - \varepsilon] \rightarrow
  \mathbb{R}$ such that
  
  (i) $\tau (t) = - \varepsilon$ if $t \in [- 2 \varepsilon, - \varepsilon]$
  
  (ii) $\tau (t) = t$ if $t \in [- 4 \varepsilon, - 3 \varepsilon]$
  
  (iii) $\tau$ is non-decreasing
  
  We then extend $H$ to $S^1 \times [- 4 \varepsilon, 0]$ by defining
  \[ H (z, t) = (\varphi ((1 + \varepsilon + \tau (t)) z), \lambda (t)), z \in
     S^1, t \in [- 4 \varepsilon, - \varepsilon] \]
  In particular, $H (z, t) = (\varphi (z), t)$ on $S^1 \times [- 2
  \varepsilon, - \varepsilon]$. Hence we may extend $i_1$ by $H$ to a $C^1$
  map from $D_+ \cup N (S^1 \times [- 4 \varepsilon, 0])$ to $\tmop{Int} \mathcal{M}$.
  Our choice of $\varphi, \lambda, \tau$ ensures this map is an embedding.
  
  On the other hand, $H (z, t) = (\varphi ((1 + \varepsilon + t) z), - 4
  \varepsilon)$ on $S^1 \times [- 4 \varepsilon, - 3 \varepsilon]$. Let $p :
  D_- \rightarrow D^2$ be the vertical projection onto the plane
  $\mathbb{R}^2$. Then $p \circ N (S^1 \times [- 4 \varepsilon, - 3
  \varepsilon])$ is the annulus $A = \left\{ x \in D^2 | \sqrt{1 - 16
  \varepsilon^2} \leqslant \norm{x} \leqslant \sqrt{1 - 9 \varepsilon^2}
  \right\}$. For $x \in A$, we compute
  \[ i_1 \circ p^{- 1} (x) = H \circ N^{- 1} \circ p^{- 1} (x) = \left(
     \varphi \left( \left( 1 + \varepsilon - \sqrt{1 - {\norm{x}}^2} \right)
     \frac{x}{\norm{x}} \right), - 4 \varepsilon \right) \]
 
  Writing $x = t z$ for $t \in \left[ \sqrt{1 - 16 \varepsilon^2}, \sqrt{1 - 9
  \varepsilon^2} \right], z \in S^1$, we have
  \[ i_1 \circ p^{- 1} (t z) = \left( \varphi \left( \left( 1 + \varepsilon -
     \sqrt{1 - t^2} \right) z \right), - 4 \varepsilon \right) \]
  
  Choose a smooth function $\sigma : \left[ 0, \sqrt{1 - 9 \varepsilon^2}
  \right] \rightarrow \mathbb{R}$ satisfying
  
  (i) $\sigma$ is strictly increasing
  
  (ii) $\sigma (t) = t$ in a neighborhood of $0$
  
  (iii) $\sigma (t) = 1 + \varepsilon - \sqrt{1 - t^2}$ in a neighborhood of
  $\left[ \sqrt{1 - 16 \varepsilon^2}, \sqrt{1 - 9 \varepsilon^2} \right]$
 
 Define a diffeomorphism $\phi$ from the closed disk $\bar{B}_0 \left(
  \sqrt{1 - 9 \varepsilon^2} \right) \subset \mathbb{R}^2$ to $\bar{B}_0 (1 -
  2 \varepsilon)$ by $\phi (t z) = \sigma (t) z, z \in S^1, t \in \left[ 0,
  \sqrt{1 - 9 \varepsilon^2} \right]$. We could now extend $i_1$ to $S^2$ by
  \[ i_1 \circ p^{- 1} (x) = (\varphi \circ \phi (x), - 4 \varepsilon), p^{-
     1} (x) \in p^{- 1} \left( \bar{B}_0 \left( \sqrt{1 - 9 \varepsilon^2}
     \right) \right) \]
  The extended $i_1$ is a $C^1$ embedding, and for small enough $\varepsilon$
  the image $i_1 (D_-)$ will be close enough to $D_1''$.
  
  It remains to verify condition (iv). Condition (i) and (ii) implies that $S_0 \cap \tmop{Im} i_1 = \varnothing$. Suppose $S_0$ is outside $\widetilde{B}_1$, then it is easy to verify that $D_2 \times P_0 - \partial D_1'$ is outside $\widetilde{B}_1$. But $D_2 \times P_0$ is transverse to $D_1'$, hence one can choose a curve in $D_2 \times P_0$ intersecting $\tmop{Im} i_1$ transversely and lies outside $\widetilde{B}_1$ except for one point. Take a ($C^1$) tubular neighborhood of $\tmop{Im} i_1$ in $\tmop{Int} \mathcal{M}$. As a consequence of the above discussion, this tubular neighborhood would be disjoint from the interior of $\widetilde{B}_1$, which is impossible. Thus we have $S_0 \subset \tmop{Int} \widetilde{B}_1$.
\end{proof}
 
\subsection{The Fourth Step}

Now we can perform the fourth step.

\begin{theorem} \label{4th}
  Let $f$ be a diffeomorphism of $\mathcal{M}= D^2 \times S^1$ such that $f$
  satisfies (1) for some $r_0 > 0$ and $f (D^2 \times P_0) \cap (D^2 \times
  P_0) = \partial D^2 \times P_0$. Then $f$ is ambient isotopic $\partial
  \mathcal{M}$ to a diffeomorphism $f'$ such that $f' (D^2 \times P_0) = D^2
  \times P_0$ and $f' = \tmop{id}$ on a neighborhood of $\partial
  \mathcal{M}$.
\end{theorem}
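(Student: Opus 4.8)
The plan is to reach the conclusion in two stages: first to undo the ``boundary twist'' recorded by $(1)$, making $f$ the identity on a collar of $\partial\mathcal{M}$, and then to move $f(D^2\times P_0)$ onto $D^2\times P_0$ by a single push across a $3$-ball lying in $\tmop{Int}\mathcal{M}$.

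For the first stage, write a point of $\mathcal{M}$ near $\partial\mathcal{M}$ as $((1-r)z_1,z_2)$ with $z_1\in\partial D^2$, $z_2\in S^1$, $r=1-\norm{x}$, fix a smooth $\chi\of[0,1]\to[0,\infty)$ with $\chi(r)=r$ on $[0,r_0/3]$, $0\le\chi(r)<r$ on $(r_0/3,r_0)$, $\chi\equiv 0$ on $[r_0/2,1]$, and set
\[ \tau_t(x,z_2)\assign\bigl(x,\ e^{\,t\chi(1-\norm{x})i}z_2\bigr),\qquad t\in I . \]
Since $\chi$ is constant near $\norm{x}=0$, each $\tau_t$ is a diffeomorphism of $\mathcal{M}$ with $\tau_0=\tmop{id}$ and $\tau_t|_{\partial\mathcal{M}}=\tmop{id}$, so $(\tau_t^{-1})_{t\in I}$ is an ambient isotopy rel $\partial\mathcal{M}$ ending at $\tau^{-1}\assign\tau_1^{-1}$; using $(1)$ one checks that $\tau^{-1}\circ f$ is the identity on the collar $A_{[1-r_0/3,1]}\times S^1$ and (the support of $\tau^{-1}$ lying where $r<r_0/2$, hence where $f$ is literally the twist) that $(\tau^{-1}\circ f)(D^2\times P_0)$ meets $D^2\times P_0$ exactly in the annulus $A_{[1-r_0/3,1]}\times P_0$. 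Replacing $f$ by $\tau^{-1}\circ f$, which is ambient isotopic to $f$ rel $\partial\mathcal{M}$, we may thus assume $f=\tmop{id}$ on a collar $A_{[1-\rho,1]}\times S^1$ and that $\Delta\assign f(D^2\times P_0)$ meets $D\assign D^2\times P_0$ exactly in $A_{[1-\rho,1]}\times P_0$.

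For the second stage, let $\Delta^{\circ}$ and $D^{\circ}$ be the closures of $\Delta$ and $D$ with the common collar annulus removed; these are smooth disks sharing only the interior circle $\gamma\assign\{\norm{x}=1-\rho\}\times P_0$, along which they are tangent (on the collar annulus $\Delta$ and $D$ genuinely coincide, so have a common tangent plane along $\gamma$). Hence $S\assign\Delta^{\circ}\cup D^{\circ}\subset\tmop{Int}\mathcal{M}$ is a $C^1$ sphere, smooth off $\gamma$; after a smoothing supported near $\gamma$ it bounds a smooth $3$-ball $B\subset\tmop{Int}\mathcal{M}$, since the solid torus is irreducible (e.g.\ view $\mathcal{M}$ as the standard solid torus in $S^3$ and apply the Schoenflies theorem there), and one checks $D\cap B=D^{\circ}$ (the annulus $D\setminus\tmop{Int}D^{\circ}$ is connected and disjoint from $\partial B$ off $\gamma$, so it lies in $\mathcal{M}\setminus B$; otherwise $\partial D\subset\partial\mathcal{M}$ would lie in the interior ball $B$). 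Pick a neighbourhood $N$ of $B$ in $\tmop{Int}\mathcal{M}$ with $D\cap N$ a disk (namely $D^{\circ}$ with a thin collar of $\gamma$ attached) properly embedded in the ball $N$, and --- using the uniqueness of collars of $D\cap N$ together with the extension of diffeomorphisms of $S^2$ over $D^3$, exactly as in the opening reduction of the proof of Lemma~\ref{Compress} --- a diffeomorphism of $N$ onto a box $\{\norm{u}<2,\,|v|<T\}\subset\mathbb{R}^2\times\mathbb{R}$ carrying $D\cap N$ to $\{v=0\}$ and $B$ to a standard half-lens $\{\norm{u}\le 1,\,0\le v\le g(\norm{u})\}$, so that $\Delta^{\circ}$ becomes the graph $\{v=g(\norm{u})\}$ and $D^{\circ}$ becomes $\{v=0,\ \norm{u}\le 1\}$; here $g$ is smooth, positive on $[0,1)$, flat at $1$, extended by $0$ for $\norm{u}\ge 1$, and $T>3\max g$. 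Choosing a bump $\sigma\of\mathbb{R}\to[0,1]$ equal to $1$ on $[-\max g,\max g]$, supported in $(-T,T)$, with $|\sigma'|<(\max g)^{-1}$, the map $K_t(u,v)\assign(u,\ v-t\,g(\norm{u})\,\sigma(v))$ is a compactly supported ambient isotopy of the box with $K_0=\tmop{id}$, with $K_t=\tmop{id}$ where $\norm{u}\ge 1$ and near $\partial N$, and with $K_1$ sending the graph $\{v=g(\norm{u})\}$ onto $\{v=0,\ \norm{u}\le 1\}$. Transporting $K$ back and extending by the identity gives an ambient isotopy of $\mathcal{M}$ rel $\partial\mathcal{M}$ with $K_1(\Delta^{\circ})=D^{\circ}$ and $K_1$ fixing the collar annulus, whence $K_1(\Delta)=D^{\circ}\cup(A_{[1-\rho,1]}\times P_0)=D$. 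Thus $f'\assign K_1\circ f$ is ambient isotopic to $f$ rel $\partial\mathcal{M}$, $f'(D^2\times P_0)=D^2\times P_0$, and $f'=\tmop{id}$ near $\partial\mathcal{M}$, as required.

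The main obstacle is the standardization in the second stage --- recognizing the abstract ball $B$, resting on the properly embedded disk $D\cap N$, as the explicit half-lens model so that $\Delta^{\circ}$ appears as a graph over $D^{\circ}$. This is exactly where one invokes the uniqueness, up to ambient diffeomorphism relative to a neighbourhood of one face, of a $3$-ball whose boundary sphere is cut by a circle into two disks --- the same sort of fact (ultimately resting on Smale's theorem for $\tmop{Diff}(S^2)$) already used in Lemma~\ref{Compress}. Everything else in the step is an explicit construction.
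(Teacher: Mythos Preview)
Your first stage (untwisting the boundary collar via $\tau^{-1}$) is correct and matches the paper's construction of $\widetilde{H}$. The gap is in the second stage.

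The assertion that $S=\Delta^{\circ}\cup D^{\circ}$ is a $C^1$ sphere is false. Since $f=\tmop{id}$ on the closed collar $A_{[1-\rho,1]}\times S^1$ and $f$ is smooth, continuity of $Df$ forces $Df=\tmop{id}$ at every point of $\gamma$. Thus at each $p\in\gamma$ the inward half-tangent plane of $\Delta^{\circ}=f(A_{[0,1-\rho]}\times P_0)$ coincides with that of $D^{\circ}=A_{[0,1-\rho]}\times P_0$: both disks leave $\gamma$ in the \emph{same} direction (toward smaller radius) inside the common tangent plane. The union $S$ therefore has a tangential cusp along $\gamma$, not a $C^1$ join --- the one-dimensional picture is $\{(x,0):x\ge 0\}\cup\{(x,x^2):x\ge 0\}$, which is not a $C^1$ submanifold of $\mathbb{R}^2$ at the origin. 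Your target half-lens has exactly the same cusp structure, so the picture is internally consistent, but this is precisely why the tools you invoke (uniqueness of collars, extension of diffeomorphisms of $S^2$ over $D^3$) do not deliver the standardization: those are statements about smooth balls and neatly embedded disks, whereas $B$ is not a smooth manifold-with-boundary and $\Delta^{\circ}$ is not neatly embedded in the half of $N$ above $D\cap N$ (it is tangent to the boundary along $\gamma$). Even granting a diffeomorphism of $(N,D\cap N)$ onto $(\text{box},\{v=0\})$, nothing you cite forces the image of $\Delta^{\circ}$ to be a graph over $D^{\circ}$; it is merely a disk tangent to $\{v=0\}$ along $\{\norm{u}=1\}$ and lying to one side, which a priori can be complicated. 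Proving that such a cusped pair is unique up to ambient diffeomorphism is essentially the content of the step, not a lemma one can quote.

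The paper avoids the cusp entirely. After the same untwisting, it uses property (iii) of $\widetilde f$ (that $\widetilde f(D^2\times P_0)$ misses $T_{(-\varepsilon,0)}$) to build a genuinely smooth embedded sphere $h_0(S^2)$: the upper part is $\widetilde f(A_{[0,1-r_0]}\times P_0)$, but the lower part is a disk lying strictly \emph{below} $D^2\times P_0$, in $T_{(-\varepsilon,0]}$, with the two pieces overlapping on the flat strip $A_{[1-2r_0,1-r_0]}\times P_0$ where everything is explicit. This gives a smooth ball $B$, and a second explicit smooth ball $B'\subset T_{(-\varepsilon,0]}$; the paper then uses Lemma~\ref{partialextension} and Theorem~\ref{IUC} to match collars and compress $B$ onto $B'$, carrying the image disk onto $D^2\times P_0$. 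The extra work is exactly what is needed to stay in the smooth category throughout.
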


\begin{proof}
  For any interval $J$, define $T_J = \{ (x, e^{t i}) \in D^2 \times S^1 |t
  \in J \}$, $A_J = \{ x \in D^2 | \norm{x} \in J \}$ and $E_J = \partial B^3
  \cap (\mathbb{R}^2 \times J)$.
  
  Shrinking $r_0$ further, we may assume that $3 r_0<1$ and $f$ satisfy (1) for $0 \leqslant
  r \leqslant 3 r_0$, and that
  \begin{align*}
   (A_{[1 - 3 r_0, 1]} \times S^1) \cap f (D^2 \times P_0) & = f (A_{[1 - 3
     r_0, 1]} \times P_0) \\
     &  = \{ (z, e^{(1 - \norm{z}) i}) \in D^2 \times S^1 |1 -
     3 r_0 \leqslant \norm{z} \leqslant 1 \} 
  \end{align*}

  We will bend $f_{|A_{[1 - 2 r_0, 1]} \times S^1}$ toward identity by an
  ambient isotopy.
  
  Let $\rho : I \rightarrow [- 1, 0]$ be a smooth function such that
  
  (i) $ r-1 < \rho (r) < 0$ for $r \in (1 - 3 r_0, 1 - 2 r_0)$
  
  (ii) $\rho \equiv 0$ on $[0, 1 - 3 r_0]$
  
  (iii) $\rho (r) = r-1$ for $r \in [1 - 2 r_0, 1]$
  
  Define $\widetilde{H} : D^2 \times S^1 \times I \rightarrow D^2 \times S^1$ by
  $\widetilde{H} (x, z, t) = (x, e^{t \rho (\norm{x}) i} z)$. It is not hard to check
  that $\widetilde{H}$ is an ambient isotopy of $\mathcal{M}$ and $\widetilde{f}
  \assign \widetilde{H_1} \circ f$ satisfy the following properties
  
  (i) $\widetilde{f} = \tmop{id}$ on $A_{[1 - 2 r_0, 1]} \times S^1$
  
  (ii) $\widetilde{f} (D^2 \times P_0) \cap (D^2 \times P_0) = A_{[1 - 2 r_0, 1]}
  \times P_0$
  
  (iii) $\widetilde{f} (D^2 \times P_0) \cap T_{(- \varepsilon, 0)} = \varnothing$
  for some $\varepsilon > 0$
  
  Next, we take a smooth embedding $G : B^3 \rightarrow T_{(- \varepsilon, 0]} \cap
  \tmop{Int} \mathcal{M}$ such that
  
  (i) $G (E_{[- 2 / 3, 1]}) = A_{[0, 1 - r_0]} \times P_0 = G (B^3) \cap (D^2
  \times P_0)$
  
  (ii) For any $(w, z) \in E_{[- 2 / 3, 0]}, w \in \mathbb{R}^2, z \in [- 2 /
  3, 0]$, 
  \begin{equation*}
  G (w, z) = \left( \left( 1 - \left( \frac{3 z}{2} + 2 \right) r_0
  \right) \frac{w}{\norm{w}}, P_0 \right)
  \end{equation*}

  Note that condition (iii) of $\widetilde{f}$ implies $G (B^3) \cap \widetilde{f} (D^2 \times P_0) = A_{[1 - 2 r_0, 1 -
  r_0]} \times P_0$. Also, condition (ii) of $G$ implies $G (E_{[- 2 / 3, 0]})
  = A_{[1 - 2 r_0, 1 - r_0]} \times P_0$.
  
  We could now choose a smooth embedding $h_0 : S^2 \hookrightarrow \tmop{Int}
  \mathcal{M}$ with the properties:
  
  (i) $h_0 (x) = G (x)$ for $x \in E_{[- 1, 0]}$
  
  (ii) $h_0 (E_{[- 2 / 3, 1]}) = \widetilde{f} (A_{[0, 1 - r_0]} \times P_0)$
  
  Define $S = h_0 (S^2)$. Denote by $B$ the 3-ball bounded within $S$. It is
  not hard to verify that $(A_{[1 - r_0, 1]} \times P_0) \cap B = (A_{[1 -
  r_0, 1]} \times P_0) \cap S = A_{[1 - r_0, 1 - r_0]} \times P_0 = G (E_{[- 2
  / 3, \dotminus 2 / 3]})$. By \cite{MunkresIsotopy2sphere}, $h_0 : S^2 \rightarrow S$ extends to a diffeomorphism $H_0 : B^3
  \rightarrow B$.
  
  The ball $B' := G (B^3)$ sits inside the ball $B$. We shall construct an
  isotopy $\Psi$ of $B$ so that $\Psi_1 (B) = B'$.
  
  Define $C_0 : \partial B^3 \times [0, 1) \rightarrow B^3$ by $C_0 (x, t) =
  (1 - t) x$. Let $C = H_0^{- 1} \circ G \circ C_0 : E_{[- 1, 0]} \times [0,
  1) \rightarrow B^3$. Then for any $x \in E_{[- 1, 0]}$, $C (x, 0) = H_0^{- 1} \circ G \circ C_0 (x, 0) =
  H_0^{- 1} \circ G (x) = H^{- 1}_0 \circ h_0 (x) = x$ and
  $C (E_{[- 1, 0]} \times (0, 1)) \subset \tmop{Int} B^3$. Applying Lemma \ref{partialextension},
  we obtain a collar $C' : \partial B^3 \times [0, \delta] \rightarrow B^3$
  for some $\delta > 0$ such that $C' = C$ on $E_{[- 1, - 1 / 3]} \times [0,
  \delta]$. Applying Theorem \ref{IUC} and shrink $\delta$ if necessary, we can find
  a diffeomorphism $\varphi : B^3 \rightarrow B^3$ such that $\varphi =
  \tmop{id}$ on $\partial B^3$ and $\varphi \circ C' = C_0$ on $\partial B^3
  \times [0, \delta]$.
  
  Define $H \assign H_0 \circ \varphi^{- 1} : B^3 \rightarrow B$. On $E_{[- 1,
  - 1 / 3]} \times [0, \delta]$ we have
  \begin{align*}
   & H \circ C_0 & \\
    = & H_0 \circ \varphi^{- 1} \circ \varphi \circ C' & \\
    = & H_0 \circ C & \\
    = & G \circ C_0 & 
  \end{align*}
  
  Take an isotopy $F : B^3 \times I \rightarrow B^3$ such that $F_0 =
  \tmop{id}$, $F_1 (B^3) \subset C_0 (E_{[- 1, - 1 / 3]} \times [0, \delta])$
  and $F$ is stable on a neighborhood of $E_{[- 1, - 2 / 3]}$. Define
  isotopies $\Phi = H \circ F \circ (H^{- 1} \times \tmop{id}_I) : B \times I
  \rightarrow B$ and $\Phi' : G \circ F \circ (G^{- 1} \times \tmop{id}_I) :
  B' \times I \rightarrow B'$. The isotopy $\Phi$ compresses the ball $B$ to
  $\Phi_1 (B) = H \circ F_1 \circ H^{- 1} (B) = H \circ F_1 (B^3)$ while
  $\Phi'$ compresses $B'$ to $\Phi'_1 (B') = G \circ F_1 \circ G^{- 1} (B') =
  G \circ F_1 (B^3)$. Since $F_1 (B^3) \subset C_0 (E_{[- 1, - 1 / 3]} \times
  [0, \delta])$ and $H \circ C_0 = G \circ C_0$ on $E_{[- 1, - 1 / 3]} \times
  [0, \delta]$, we have $H \circ F_1 = G \circ F_1$, hence $\Phi'_1 (B') =
  \Phi_1 (B)$. Stability of $F$ implies that $\Phi$ is stable on a
  neighborhood of $H (E_{[- 1, - 2 / 3]})$ in $B$ and $\Phi'$ is stable on a
  neighborhood of $G (E_{[- 1, - 2 / 3]})$ in $B'$. Note that $G (E_{[- 1, - 2
  / 3]}) = H (E_{[- 1, - 2 / 3]})$.
  
  The two isotopies $\Phi$ and $\Phi' \circ (\Phi_1'^{-1} \times \tmop{id}_I) \circ
  (\Phi_1 \times \tmop{id}_I)$ of $B$ coincide at $t = 1$. Join them at $t =
  1$ and smooth the joint as usual, we obtain an isotopy $\Psi : B \times I
  \rightarrow B$ with properties:
  
  (i) $\Psi_0 = \tmop{id}$
  
  (ii) $\Psi_1 = \Phi_1^{\prime - 1} \circ \Phi_1$
  
  (iii) $\Psi$ is stable on a neighborhood $W$ of $G (E_{[- 1, - 2 / 3]})$
  
  The property (ii) of $\Psi$ implies that $\Psi_1 (B) = B'$.
  
  We compute
  \begin{align*}
   & \Psi_1 (\widetilde{f} (A_{[0, 1 - r_0]}) \times P_0) & &\\
    = & \Psi_1 (h_0 (E_{[- 2 / 3, 1]})) & & \tmop{by} \tmop{property}
    (\tmop{ii}) \tmop{of} h_0\\
    = & \Psi_1 (H (E_{[- 2 / 3, 1]})) & & \tmop{for} h_0 = H_{0| S^2} \tmop{and}
    \varphi_{|s^2} = \tmop{id}\\
    = & \Phi_1^{\prime - 1} \circ \Phi_1 \circ H (E_{[- 2 / 3, 1]}) & & \\
    = & G \circ F_1^{- 1} \circ G^{- 1} \circ H \circ F_1 (E_{[- 2 / 3, 1]}) & &
    \\
    = & G \circ F_1^{- 1} \tmmathbf{} \circ G^{- 1} \circ G \circ F_1 (E_{[- 2
    / 3, 1]}) & & \tmop{for} G \circ F_1 = H \circ F_1\\
    = & G (E_{[- 2 / 3, 1]}) &  \\
    = & A_{[0, 1 - r_0]} \times P_0 & & \tmop{by} \tmop{property (i)} \tmop{of}
    G
  \end{align*}

  Finally, we wish to extend $\Psi$ to $\mathcal{M}$. Apply Lemma \ref{VF} to
  $\Psi : B \times I \rightarrow \tmop{Int} \mathcal{M}$, we obtain a
  compactly supported vector field $X$ on $\tmop{Int} \mathcal{M} \times I$
  that equals to the tangent vectors $\frac{\partial}{\partial t} \Psi (x,
  t)$. Extend $X$ to $\mathcal{M} \times I$ by $0$ and compose the result with
  the projection $T (\mathcal{M} \times I) \rightarrow T\mathcal{M}$, we get a
  time-dependent vector field $G$. Since $(A_{[1 - r_0, 1]} \times P_0) \cap B
  = G (E_{[- 2 / 3, \dotminus 2 / 3]}) \subset G (E_{[- 1, \dotminus 2 / 3]})
  \subset W$, we can take a neighborhood $V$ of $(A_{[1 - r_0, 1]} \times P_0)
  $ in $\mathcal{M}$ with $V \cap B \subset W$. Let $\xi : \mathcal{M}
  \times I \rightarrow I$ be a smooth function that equals to $0$ on $A_{[1 -
  r_0, 1]} \times P_0 \times I$ and equals to $1$ outside $V \times I$. Then
  $\xi G$ generates a compactly supported ambient isotopy $\widetilde{\Psi}$ of $\mathcal{M}$ such
  that
  
  (i) $\widetilde{\Psi}$ extends $\Psi$
  
  (ii) $\widetilde{\Psi}$ is stable on $A_{[1 - r_0, 1]} \times P_0$
  
  (iii) $\widetilde{\Psi}$ is stable on a neighborhood of $\partial \mathcal{M}$
 
 Now $f' = \widetilde{\Psi}_1 \circ \widetilde{f}$ has
  the desired properties, for $\Psi_1 (\widetilde{f} (A_{[0, 1 - r_0]} \times
  P_0)) = A_{[0, 1 - r_0]} \times P_0$ and $\widetilde{f}_{|A_{[1 - r_0, 1]} \times
  S^1} = \tmop{id}$.
\end{proof}

\subsection{The Fifth Step}

\begin{theorem}
  Let $f$ be a self-diffeomorphism of $\mathcal{M}= D^2 \times S^1$ such that
  $f (D^2 \times P_0) = D^2 \times P_0$ and $f = \tmop{id}$ on a neighborhood
  of $\partial \mathcal{M}$. Then $f$ is ambient isotopic rel $\partial
  \mathcal{M}$ to a diffeomorphism $f'$ of $\mathcal{M}$ that is identical on
  a neighborhood of $\partial \mathcal{M} \cup (D^2 \times P_0)$.
\end{theorem}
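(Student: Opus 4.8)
\emph{Overview.} The plan is to straighten $f$ in two stages: first along the disk $Z \assign D^2 \times P_0$ itself, then transversally to it, the second stage being governed by the fiber-derivative machinery of \S 2.6. \emph{Stage 1 (straightening $f$ on $Z$).} Because $f = \tmop{id}$ near $\partial \mathcal{M}$, the restriction $f_{|Z}$ is a diffeomorphism of $D^2$ equal to the identity near $\partial D^2$. Since $\tmop{Diff}_{\partial}(D^2)$ is path-connected (classical; it is even contractible by Smale's theorem, and a smooth path may be chosen), pick a smooth isotopy $J : D^2 \times I \to D^2$ with $J_0 = \tmop{id}$, $J_1 = f_{|Z}$ and $J_t = \tmop{id}$ near $\partial D^2$ for all $t$. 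Then $\Theta_t \assign J_t \times \tmop{id}_{S^1}$ is an ambient isotopy of $\mathcal{M}$ with $\Theta_0 = \tmop{id}$ that equals $\tmop{id}$ on a neighborhood of $\partial \mathcal{M}$, so replacing $f$ by $\Theta_1^{-1} \circ f$ we may assume $f_{|Z} = \tmop{id}$, while still $f = \tmop{id}$ near $\partial \mathcal{M}$. The product structure of $\mathcal{M}$ makes this stage free of any isotopy-extension argument.

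\emph{Stage 2 (fiber-linearizing $f$ near $Z$).} Let $\tau : D^2 \times (-\pi,\pi) \hookrightarrow \mathcal{M}$, $\tau(x,\theta) = (x, e^{i\theta})$, a tubular neighborhood of $Z$ whose restriction to $D^2 \times \{0\}$ is the inclusion. On a neighborhood of $D^2 \times \{0\}$ the map $\widetilde{f} \assign \tau^{-1} \circ f \circ \tau$ is defined, fixes $D^2 \times \{0\}$ pointwise, and equals $\tmop{id}$ near $\partial D^2 \times \{0\}$. A connectedness argument (using $f = \tmop{id}$ near $\partial Z$ and the local two-sidedness of $Z$) shows $f$ preserves the two local sides of $Z$, so the fiber derivative of $\widetilde{f}$ is multiplication by a smooth positive function $\phi : D^2 \to (0,+\infty)$ with $\phi \equiv 1$ near $\partial D^2$. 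Writing $\widetilde{f} = (\widetilde{f}_1, \widetilde{f}_2)$, the rescalings $\widetilde{f}^{(t)}(x,\theta) \assign (\widetilde{f}_1(x,t\theta), t^{-1}\widetilde{f}_2(x,t\theta))$ for $t \in (0,1]$, together with $\widetilde{f}^{(0)}(x,\theta) \assign (x,\phi(x)\theta)$, form a smooth isotopy through open embeddings, each fixing $D^2 \times \{0\}$ and equal to $\tmop{id}$ near $\partial D^2 \times \{0\}$; smoothness at $t = 0$ is Hadamard's lemma, and $\widetilde{f}^{(1)} = \widetilde{f}$. Conjugating by $\tau$ yields an isotopy of the germ of $f$ along $Z$, from $f$ to the ``fiber-linear'' map $\tau(x,\theta) \mapsto \tau(x,\phi(x)\theta)$. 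One then globalizes this to an ambient isotopy of $\mathcal{M}$ rel $\partial \mathcal{M}$ exactly by the cut-off technique of the Third and Fourth Steps (Lemma \ref{VF} together with a bump function): since the germ isotopy is stationary on the annulus of $Z$ near $\partial Z$ where $f = \tmop{id}$, the cut-off may be arranged to keep the ambient isotopy stable there. After this stage we may assume, shrinking $\delta>0$ so that $\delta\cdot\sup_{D^2}\phi<\pi$, that $f(\tau(x,\theta)) = \tau(x,\phi(x)\theta)$ on $D^2 \times (-\delta,\delta)$.

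\emph{Stage 3 (absorbing $\phi$).} It remains to rescale in the $S^1$-direction to kill $\phi$. For $t \in I$ let $\Xi_t$ be the self-diffeomorphism of $\mathcal{M}$ which, under the identification $\tau$, multiplies the $\theta$-coordinate by $\phi(x)^{-t}$ on a neighborhood of $D^2 \times \{0\}$ large enough to contain $f(\tau(D^2 \times (-\delta,\delta)))$, and tapers smoothly to the identity before $\theta = \pm\pi$; because $\phi \equiv 1$ near $\partial D^2$ these maps are the identity near $\partial \mathcal{M}$ and extend over $D^2 \times \{-1\}$ by the identity. The one delicate point is that the $\theta$-reparametrization must be a genuine (strictly monotone) diffeomorphism for the given $\phi$, which is precisely what the $\alpha,\beta$-construction in the proof of Lemma \ref{rescale} achieves, and one checks there that $(\Xi_t)$ can be taken smooth in $t$ with $\Xi_0 = \tmop{id}$. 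Then $\Xi_t$ is an ambient isotopy rel $\partial \mathcal{M}$ and $f' \assign \Xi_1 \circ f$ satisfies $f'(\tau(x,\theta)) = \tau(x, \phi(x)^{-1}\phi(x)\theta) = \tau(x,\theta)$ near $Z$; being also the identity near $\partial \mathcal{M}$, it is the identity on a neighborhood of $\partial \mathcal{M} \cup (D^2 \times P_0)$. I expect Stage 2 to be the main obstacle: running the fiber-derivative / tubular-neighborhood straightening while respecting the ``rel $\partial \mathcal{M}$'' constraint forces one to convert a germ-level isotopy into an honest compactly supported ambient isotopy of $\mathcal{M}$ that is stationary near $\partial \mathcal{M}$, exactly where the core disk $Z$ meets the boundary; Stages 1 and 3 are comparatively routine given, respectively, the product structure of $\mathcal{M}$ and Lemma \ref{rescale}.
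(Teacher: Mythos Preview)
Your proof is correct and follows essentially the same route as the paper's Fifth Step: reduce to $f|_{D^2\times P_0}=\tmop{id}$ via Smale and the product structure, then linearize near the disk by the fiber-derivative rescaling $t^{-1}\widetilde f(x,t\theta)$ (Hirsch~4.5.3), kill the residual positive function $\phi$, and globalize by Lemma~\ref{VF} plus a cut-off. The paper organizes things slightly differently---it first shrinks the tubular neighborhood so that $f\circ C$ lands in $\tmop{Im} C$, merges your Stages~2 and~3 into a single isotopy of tubular neighborhoods before globalizing once, and spells out the globalization as Claims~I--III---but the substance is the same.
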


\begin{proof}
  First, we claim that without loss of generality we may assume $f_{|D^2
  \times P_0} = \tmop{id}$. To see this, note that $f_{|D^2 \times P_0}^{- 1}$
  is a self-diffeomorphism of $D^2$ that is identical near $\partial D^2$. By
  \cite{Smale2sphere} Theorem 4 (p.624), there is an
  ambient isotopy $\tau : D^2 \times I \rightarrow D^2$ such that $\tau_1 = f_{|D^2
  \times P_0}^{- 1}$ and $\tau$ is stable in a neighborhood of $\partial D^2$. Let
  $T : \mathcal{M} \times I \rightarrow \mathcal{M}$ be the ambient isotopy
  of $\mathcal{M}= D^2 \times S^1$ defined by $T (x, z, t) = (\tau (x, t),
  z)$. Then $T$ is stable near $\partial \mathcal{M}$. Hence $f \circ
  T_1$ is identical near $\partial \mathcal{M}$ and on $D^2 \times P_0$,
  and we can simply replace $f$ by $f \circ T_1$.
  
  By assumption, there exists a $r_0 > 0$ such that $f_{|A_{[1 - r_0, 1]}
  \times S^1} = \tmop{id}$, where $A_J = \{ x \in D^2 | \norm{x} \in J \}$.
  
  Regard $D^2 \times \mathbb{R}$ as the trivial vector bundle over $D^2$.
  Define $C \of D^2 \times \mathbb{R} \rightarrow D^2 \times S^1$ by $C (x, t)
  = (x, e^{(\arctan t) i})$. Then $C$ and $f \circ C$ are both tubular
  neighborhoods of $\tmop{Int} D^2 \times P_0$ in $\tmop{Int} \mathcal{M}$. We
  shall construct an isotopy from $f \circ C$ to $C$ that is stable on $A_{[1
  - 2 r_0 / 3, 1)} \times \mathbb{R}$ by modifying the proof of \cite{Hirsch}
  Theorem 4.5.3.
  
  By compactness of $D^2$, there exists $\varepsilon > 0$ such that $f \circ C
  (D^2 \times [- \varepsilon, \varepsilon]) \subset \tmop{Im} C$. Choose a
  smooth function $\rho : I \rightarrow I$ that equals 0 on $[0, 1 - r_0]$ and
  equals 1 on $[1 - 2 r_0 / 3, 1]$. It follows from the monotonicity of the
  real valued function $s \mapsto \frac{s}{\sqrt{1 + s^2}}$ that for any $x
  \in \tmop{Int} D^2, t \in I$, the function $\sigma_{x, t} : \mathbb{R}
  \rightarrow \mathbb{R}$ defined by
  \[ \sigma_{x, t} (s) = \left( (1 - t) + t \left( \rho (\norm{x}) + (1 - \rho
     (\norm{x})) \frac{\varepsilon}{\sqrt{1 + s^2}} \right) \right) s \]
  is strictly increasing. Note that $\rho (\norm{x})$ is smooth on $x$ since
  $\rho (\norm{x}) \equiv 0$ near the origin of $\mathbb{R}^2$. Therefore the map
  $\Psi : \tmop{Int} D^2 \times \mathbb{R} \times I \rightarrow \tmop{Int} D^2
  \times \mathbb{R}, \Psi (x, s, t) \assign (x, \sigma_{x, t} (s))$ is a smooth
  isotopy, and so is $f \circ C \circ \Psi$.
  
  We make the following observations about the isotopy $f \circ C \circ \Psi$
  from $f \circ C_{| \tmop{Int} D^2 \times \mathbb{R}}$ to $f \circ C \circ \Psi_1$:
  
  (i) $\Psi$ is stable on $A_{[1 - 2 r_0 / 3, 1]} \times
  \mathbb{R}$
  
  (ii) If $\norm{x} \geqslant 1 - r_0$, then $f \circ C \circ \Psi_1 (x, s) = f
  \circ C (x, \sigma_{x, 1} (s)) = C (x, \sigma_{x, 1} (s)) \in \tmop{Im} C$
  
  (iii) If $\norm{x} \leqslant 1 - r_0$, then $f \circ C \circ \Psi_1 (x, s) = f
  \circ C \left( x, \frac{\varepsilon s}{\sqrt{1 + s^2}} \right) \in \tmop{Im}
  C$, since $\left| \frac{s}{\sqrt{1 + s^2}} \right| < 1$
  
  (iv) For any $t \in I$, $\Psi_t = \tmop{id}$ on $\tmop{Int} D^2 \times {0}$
  
  Define $g = C^{- 1} \circ f \circ C \circ \Psi_1 : \tmop{Int} D^2 \times
  \mathbb{R} \rightarrow \tmop{Int} D^2 \times \mathbb{R}$, and define $H :
  \tmop{Int} D^2 \times \mathbb{R} \times I \rightarrow \tmop{Int} D^2 \times
  \mathbb{R}$ by
  \begin{eqnarray*}
    H (x, s, t) = & \left\{\begin{array}{l}
      t^{- 1} g (t (x, s)) = t^{- 1} g (x, t s), t \in (0, 1]\\
      \Phi (x, s), t = 0
    \end{array}\right. & 
  \end{eqnarray*}
  where $\Phi$ is the fiber derivative of $g$ (this makes sense since $g= \tmop{id}$ on $\tmop{Int} D^2 \times {0}$). The proof of \cite{Hirsch} 4.5.3
  shows that $H$ is smooth. Moreover, when $\norm{x} \geqslant 1 - \frac{2
  r_0}{3}, t > 0$, we can compute:
  \begin{align*}
    & H (x, s, t) & &\\
    = & t^{- 1} \cdot g (x, t s) & &\\
    = & t^{- 1} \cdot C^{- 1} \circ f \circ C \circ \Psi_1 (x, t s) & &\\
    = & t^{- 1} \cdot C^{- 1} \circ f \circ C (x, t s) & &\tmop{by}
    \tmop{stability} \tmop{of} f \circ C \circ \Psi \tmop{on} A_{[1 - 2 r_0 / 3,
    1]} \times \mathbb{R}\\
    = & t^{- 1} \cdot C^{- 1} \circ C (x, t s) & &\tmop{for} f_{|A_{[1 - r_0,
    1]} \times S^1} = \tmop{id}\\
    = & (x, s) & &
  \end{align*}
  By continuity, $H (x, s, t) = (x, s)$ whenever $\norm{x} \geqslant 1 - \frac{2
  r_0}{3}$. Therefore the isotopy $C \circ H$ between $C \circ \Phi$ and $f
  \circ C \circ \Psi_1$ is stable on $A_{[1 - 2 r_0 / 3, 1)} \times \mathbb{R}$.
  
  Since $g$ is a local diffeomorphism, the fiber derivative $\Phi$ is a vector
  bundle isomorphism. Hence it is of the form $\Phi (x, s) = (x, \varphi (x)
  s)$ where $\varphi : \tmop{Int} D^2 \rightarrow \mathbb{R}- \{ 0 \}$ is a
  smooth function that equals 1 on $A_{[1 - 2 r_0 / 3, 1)}$. Connectivity
  implies $\varphi (x) > 0$ for all $x \in \tmop{Int} D^2$. Thus we can bridge
  $C \circ \Phi$ and $C_{| \tmop{Int} D^2 \times \mathbb{R}}$ by an isotopy of the form $C (x, s, t) \mapsto (x, ((1
  - t) + t \varphi (x)) s)$, which is obviously stable on $A_{[1 - 2 r_0 / 3,
  1)} \times \mathbb{R}$. Combining this isotopy with $C \circ H$ and $f \circ C
  \circ \Psi$ in the appropriate order and direction, we obtain (after smoothing
  the joints) a smooth isotopy
  \begin{equation*}
  F: \tmop{Int}D^2 \times \mathbb{R} \times I \rightarrow \tmop{Int}D^2 \times S^1
  \end{equation*}
from $f \circ C_{| \tmop{Int} D^2 \times \mathbb{R}}$ to $C_{| \tmop{Int} D^2 \times \mathbb{R}}$ that is stable on
  $A_{[1 - 2 r_0 / 3, 1)} \times \mathbb{R}$.
  
  {\tmstrong{Claim I}}: The set $F (A_{[0, 1 - 2 r_0 / 3)} \times \mathbb{R}
  \times I)$ lies in $A_{[0, 1 - 2 r_0 / 3)} \times S^1$.
  
  Suppose on the contrary there is some $x_0 \in A_{[0, 1 - 2 r_0 / 3)}, s_0
  \in \mathbb{R}, t_{0} \in I$ such that $F (x_0, s_0, t_0) \in A_{[1 - 2 r_0 /
  3, 1)} \times S^1$. The image of the isotopy $f \circ C \circ \Psi$ lies in
  $\tmop{Im} f \circ C$, while the image of the other two isotopies composing
  $F$ lies in $\tmop{Im} C$. We consider the two cases separately:
  
  Case 1: If $F (x_0, s_0, t_0) = C (x_1, s_1)$ for some $(x_1, s_1) \in \tmop{Int} D^2
  \times \mathbb{R}$. Then $C (x_1, s_1) \in A_{[1 - 2 r_0 / 3, 1)} \times
  S^1$ implies $x_1 \in A_{[1 - 2 r_0 / 3, 1)}$. By the stability of $F$ on
  $A_{[1 - 2 r_0 / 3, 1)} \times \mathbb{R}$, we have $F (x_0, s_0, t_0) = C
  (x_1, s_1) = F (x_1, s_1, 1) = F (x_1, s_1, t_0)$. But $F$ is an isotopy,
  thus we must have $x_0 = x_1$, a contradiction.
  
  Case 2: If $F (x_0, s_0, t_0) = f \circ C (x_2, s_2)$ for some $(x_2, s_2)
  \in \tmop{Int} D^2 \times \mathbb{R}$. Since $f$ is identical on $A_{[1 - r_0, 1]}
  \times S^1$, we know $F (x_0, s_0, t_0) = f^{- 1} \circ F (x_0, s_0, t_0) =
  f^{- 1} \circ f \circ C (x_2, s_2) = C (x_2, s_2)$, and the argument in Case
  1 leads to a contradiction.
  
  This proves the claim above.
  
  Our next step is to construct an ambient isotopy $\bar{F}$ of $\mathcal{M}$
  that equals $F$ in a neighborhood of $\tmop{Int}D^2 \times P_0$. Let $\alpha :
  \mathcal{M} \rightarrow I$ be a smooth function that equals $0$ on $A_{[1 -
  r_0 / 3, 1]} \times S^1$ and 1 on $A_{[0, 1 - 2 r_0 / 3]} \times S^1$. Take
  a $C^{\infty}$ 3-ball $B$ in $\tmop{Int} D^2 \times \mathbb{R}$ that
  contains $A_{[0, 1 - r_0 / 3]} \times \{ 0 \}$ in its interior. Apply
  Lemma \ref{VF}  to the isotopy $\widetilde{F} \assign F \circ ((f^{- 1} \circ C^{-
  1}) \times \tmop{id}_I)$ of $f \circ C (B)$ in $\tmop{Int} \mathcal{M}$, we
  obtain a compactly supported vector field $X$ on $\tmop{Int} \mathcal{M}
  \times I$ that equals to the tangent vectors of curves $t \mapsto \widehat{F}
  (x, t)$ on $\tmop{Im} \widehat{F}$, where $\widehat{F}$ denotes the track of
  $\widetilde{F}$. Define a time-dependent vector field $G$ by composing $X$ with
  the map $T p : T (\tmop{Int} \mathcal{M} \times I) \rightarrow T \tmop{Int}
  \mathcal{M}$ induced by the projection $p : \tmop{Int} \mathcal{M} \times I
  \rightarrow \tmop{Int} \mathcal{M}$. Define another time-dependent vector
  field $G'$ on $\tmop{Int} \mathcal{M}$ by $G' (x, t) = \alpha (x) G (x, t),
  x \in \tmop{Int} \mathcal{M}, t \in I$. Then $G' = 0$ on $A_{[1 - r_0 / 3,
  1)} \times S^1 \times I$.
  
  {\tmstrong{Claim II}}: The time-dependent vector fields $G$ and $G'$
  coincide on $\tmop{Im} \widehat{F}$.
  
  An element of $\tmop{Im} \widehat{F}$ is of the form $\widehat{F} (f \circ C (y, s),
  t)$ where $(y, s) \in B$. By definition, we have $\widehat{F} (f \circ C (y, s),
  t) = (\widetilde{F} (f \circ C (y, s), t), t) = (F (y, s, t), t)$. There are two
  cases:
  
  Case 1: If $y \in A_{[0, 1 - 2 r_0 / 3)}$, then the Claim I we proved
  earlier implies $F (y, s, t) \in A_{[0, 1 - 2 r_0 / 3)} \times S^1$.
  Therefore $\alpha \circ F (y, s, t) = 1$ and $G' (F (y, s, t), t) = \alpha
  \circ F (y, s, t) \cdot G (F (y, s, t), t) = G (F (y, s, t), t)$.
  
  Case 2: If $y \in A_{[1 - 2 r_0 / 3, 1)}$, then we compute:
  \begin{align*}
    & X \circ \widehat{F} (f \circ C (y, s), t) & &\\
    = & \frac{\partial}{\partial t} \widehat{F} (f \circ C (y, s), t) & &\\
    = & \frac{\partial}{\partial t} (F (y, s, t), t) & &\\
    = & \left( 0, \frac{\partial}{\partial t} \right) & &\tmop{by}
    \tmop{stability} \tmop{of} F \tmop{on} A_{[1 - 2 r_0 / 3, 1)} \times
    \mathbb{R}\\
  \end{align*}
  Thus $G \circ \widehat{F} (f \circ C (y, s), t) = 0 = G' \circ \widehat{F} (f \circ
  C (y, s), t)$ in Case 2. This proves Claim II.
  
  As before, the time-dependent vector field $G'$ generates an ambient isotopy
  of $\tmop{Int} \mathcal{M}$. This isotopy is supported in $A_{[0, 1 - r_0 /
  3]} \times S^1$, and thus we can extend it to an ambient isotopy of
  $\mathcal{M}$ by identity. Denote the extended isotopy by $\bar{F}$. From
  Claim II, we know $\bar{F}$ extends $\widetilde{F}$.
  
  Choose $b > 0$ such that $A_{[0, 1 - r_0 / 3]} \times (- b,
  b) \subset \tmop{Int} B$.
  
  {\tmstrong{Claim III}}: that $\bar{F}_1 \circ f$ coincide with identity on
  $C (D^2 \times (- b, b))$.
  
  There are two cases:
  
  If $(y, s) \in A_{[1 - r_0 / 3, 1]} \times (- b, b)$, then
  \begin{align*}
    & \bar{F}_1 \circ f \circ C (y, s) & &\\
    = & \bar{F}_1 \circ C (y, s) & & \tmop{for} f_{|A_{[1 - r_0, 1]} \times S^1}
    = \tmop{id}\\
    = & C (y, s) & & \tmop{for} \bar{F} \tmop{is} \tmop{stable} \tmop{on} A_{[1 - r_0 / 3, 1]} \times S^1
  \end{align*}
  
  If $(y, s) \in A_{[0,1 - r_0 / 3]} \times (- b, b)$, then
  \begin{align*}
    & \bar{F}_1 \circ f \circ C (y, s) & & \\
    = & \widetilde{F}_1 \circ f \circ C (y, s) & & \tmop{for} \bar{F} \tmop{extends}
    \widetilde{F}\\
    = & F_1 (y, s) & &\tmop{by} \tmop{definition} \tmop{of} \widetilde{F}\\
    = & C (y, s) & &
  \end{align*}
  This proves Claim III. Now $f' = \bar{F}_1 \circ f$ is the desired
  diffeomorphism.
\end{proof}

\subsection{The Sixth Step}

\begin{theorem}
  Let $f$ be a diffeomorphism of $\mathcal{M}= D^2 \times S^1$ that is
  identical on a neighborhood of $\partial \mathcal{M} \cup (D^2 \times P_0)$.
  Then $f$ is ambient isotopic rel $\partial \mathcal{M}$ to identity.
\end{theorem}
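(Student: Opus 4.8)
\emph{Overview.} The plan is to cut $\mathcal{M}$ open along the meridian disk $D^2\times P_0$ — on which $f$ is already the identity — apply the Smale-type results of Section~2 to the resulting $3$-ball, and then glue the ambient isotopy produced there back up to $\mathcal{M}$.

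\emph{Cutting and lifting.} Write $S^1=[0,1]/(0\sim1)$ with $P_0$ the image of the endpoints, put $\mathcal{N}=D^2\times[0,1]$, and let $q\colon\mathcal{N}\to\mathcal{M}$ be the identification map; it restricts to a diffeomorphism $\operatorname{Int}\mathcal{N}\to\mathcal{M}\setminus(D^2\times P_0)$. Let $\mathcal{U}\subset\mathcal{M}$ be an open neighbourhood of $\partial\mathcal{M}\cup(D^2\times P_0)$ on which $f=\operatorname{id}$. Gluing the identity on $q^{-1}(\mathcal{U})$ to $(q|_{\operatorname{Int}\mathcal{N}})^{-1}\circ f\circ q$ on $\operatorname{Int}\mathcal{N}$ (the two agree wherever both are defined, since $f$ is the identity there) yields a diffeomorphism $\hat{f}$ of $\mathcal{N}$ with $q\circ\hat{f}=f\circ q$ that is the identity on the neighbourhood $q^{-1}(\mathcal{U})$ of $\partial\mathcal{N}$; thus $\hat{f}\in\operatorname{Diff}_{\partial,U}(\mathcal{N})$. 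Since $D^2\times[0,1]$ with its corners smoothed is diffeomorphic to $D^3$, I regard $\hat{f}$ henceforth as a diffeomorphism $g\in\operatorname{Diff}_{\partial,U}(D^3)$.

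\emph{Straightening $g$ on the ball.} By Theorem~\ref{SC} there is a smooth ambient isotopy of $D^3$ rel $\partial D^3$ ending at $g$. As $g\in\operatorname{Diff}_{\partial,U}(D^3)\subset\operatorname{Diff}_{\partial,D}(D^3)$, Theorem~\ref{PathII} replaces it by a smooth ambient isotopy $H$ with $H_1=g$ and $H_t\in\operatorname{Diff}_{\partial,D}(D^3)$ for all $t$, and then Theorem~\ref{PathI} yields a smooth ambient isotopy $H'$ with $H'_1=g$ and $H'_t\in\operatorname{Diff}_{\partial,U}(D^3)$ for all $t$. The point that needs care — and the reason the whole $\operatorname{Diff}_{\partial,U}$ machinery is set up — is that I need a \emph{single} neighbourhood of $\partial D^3$ on which $H'_t=\operatorname{id}$ simultaneously for all $t$, i.e.\ that $H'$ be supported in a compact subset of $\operatorname{Int}D^3$. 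This is not formal from the pointwise statement, but follows by inspecting the construction in the proof of Theorem~\ref{PathI} together with compactness of $\partial D^3=S^2$ (so the relevant collar width $\inf_{S^2}\delta$ there is positive, and the joint of the two isotopies combined there is stable on a fixed collar). I expect this to be the only genuinely delicate step; the rest is bookkeeping.

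\emph{Gluing back.} Transporting $H'$ through the identification of the first step gives a smooth ambient isotopy $\widehat{H}$ of $\mathcal{N}$ with $\widehat{H}_0=\operatorname{id}$, $\widehat{H}_1=\hat{f}$, and, by compactness of $D^2\times\{0,1\}$, some $\varepsilon_0>0$ with $\widehat{H}_t=\operatorname{id}$ on $\partial D^2\times[0,1]$ and on $D^2\times\bigl([0,\varepsilon_0)\cup(1-\varepsilon_0,1]\bigr)$ for every $t$. I then define $G\colon\mathcal{M}\times I\to\mathcal{M}$ by $G\bigl(q(z,\theta),t\bigr)=q\bigl(\widehat{H}_t(z,\theta)\bigr)$ for $\theta\in(0,1)$ and $G\equiv\operatorname{id}$ on $(D^2\times P_0)\times I$. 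On the open set $q\bigl(D^2\times((0,\varepsilon_0)\cup(1-\varepsilon_0,1))\bigr)\times I$, which together with $\bigl(\mathcal{M}\setminus(D^2\times P_0)\bigr)\times I$ covers $\mathcal{M}\times I$, the map $G$ is simply the projection, while on the latter set it equals $q\circ\widehat{H}\circ\bigl((q|_{\operatorname{Int}\mathcal{N}})^{-1}\times\operatorname{id}_I\bigr)$; hence $G$ is smooth, and running the same argument with $\widehat{H}_t^{-1}$ shows the track of $G$ is a diffeomorphism, so $G$ is an ambient isotopy of $\mathcal{M}$. By construction $G_0=\operatorname{id}$ and $G_1=f$ (using $q\circ\hat{f}=f\circ q$ and $f|_{D^2\times P_0}=\operatorname{id}$), and $G$ is stable on $\partial\mathcal{M}=q(\partial D^2\times[0,1])$ since each $\widehat{H}_t$ fixes $\partial D^2\times[0,1]$. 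By the criterion recalled in Section~2, $f$ is then ambient isotopic rel $\partial\mathcal{M}$ to the identity, which completes this step and, together with the previous five, the proof of Theorem~\ref{Main}.
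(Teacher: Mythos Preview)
Your argument is essentially correct and uses the same core ingredients as the paper (Theorems \ref{SC}, \ref{PathII}, \ref{PathI}), but the paper's execution is noticeably cleaner. Instead of cutting $\mathcal{M}$ open along $D^2\times P_0$ and working with the corner manifold $D^2\times[0,1]$, the paper simply chooses a smooth $3$-ball $B\subset\operatorname{Int}\mathcal{M}\setminus(D^2\times P_0)$ whose interior contains $\overline{\mathcal{M}\setminus W}$, the closure of the support of $f$. Since $f$ is the identity outside $B$ and near $\partial B$, one applies the three theorems directly to $f|_B\in\operatorname{Diff}_{\partial,U}(B)$ and extends the resulting isotopy by the identity to all of $\mathcal{M}$. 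This sidesteps every complication you encountered: no corner smoothing, no gluing across $D^2\times P_0$, and no need to argue (as you correctly flagged as delicate) that the isotopy produced by Theorem~\ref{PathI} is stable on a \emph{uniform} collar of the boundary --- with the paper's choice of $B$, extension by identity is automatic once each $H'_t$ lies in $\operatorname{Diff}_{\partial,U}(B)$, because the isotopy simply lives inside $B$. Your inspection of the proof of Theorem~\ref{PathI} to extract uniform stability is valid (compactness of $\partial M$ gives $\inf\delta>0$, and the two pieces $G$ and $F$ assembled there are each stable on a fixed collar), but it is work the paper's choice of ball avoids entirely.
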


\begin{proof}
  Let $W$ be a neighborhood of $\partial \mathcal{M} \cup (D^2 \times P_0)$
  such that $f_{|W} = \tmop{id}$. Take a smooth 3-ball $B \subset \tmop{Int}
  \mathcal{M}- D^2 \times P_0$ such that the closure $\overline{\mathcal{M}-
  W} \subset \tmop{Int} B$. Now $f_{|B}$ is a diffeomorphism of $B$ that
  equals to identity near $\partial B$. By Theorem \ref{SC}, Theorem \ref{PathI} and Theorem
  \ref{PathII}, there exist an ambient isotopy $F : B \times I \rightarrow I$ such that
  $F_1 = f$ and $F$ is stable in a neighborhood of $B$. Now extend $F$ by
  identity to an ambient isotopy.
\end{proof}

\section{Proof of Theorem \ref{Handlebody} and Theorem \ref{MainTheoremSmooth}}
\begin{proof} [Proof of Theorem \ref{Handlebody}]

We will prove by induction. The case $g=1$ is given by Theorem \ref{Main}. The proof of the inductive step is basically parallel to that of Theorem \ref{Main}, and we shall merely present the necessary modifications. Assume that the conclusion holds for $g \leq n-1$. Given a handlebody $V_g$ and a diffeomorphism $f$ rel $\partial V_g$. We could view the handles as images of smooth embeddings of $D^2 \times [-2,2]$ into $V_g$. Let $i_0$ be one of these embeddings. Note that $i_0(D^2 \times [-2,2]) \cap \partial V_g =i_0(\partial D^2 \times [-2,2])$.

Define a ``partial collar'' $\Theta_0 : \partial D^2 \times [- 2, 2] \times
\left[ 0, \frac{1}{2} \right] \hookrightarrow V_g$ by
\[ \Theta_0 (z, t, r) = i_0 ((1 - r) z, t), z \in \partial D^2, t \in [- 2, 2],
   r \in \left[ 0, \frac{1}{2} \right] \]
where $D^2$ is regarded as a subset of $\mathbb{C}$.

Denote by $i$ the restriction of $i_0$ to $D^2 \times [- 1, 1]$. Mimicking the
proof of Lemma \ref{partialextension}, we can construct a ``partially extension'' of $\Theta_0$,
that is, a collar $\Theta : \partial V_g \times [0, d) \hookrightarrow V_g$ for
some $d > 0$ such that $\Theta (i (z, t), r) = \Theta_0 (z, t, r) = i ((1 - r) z,
t)$ for all $z \in \partial D^2, t \in [- 1, 1], r \in [0, d)$. Shrinking $d$
if necessary, we may further assume $\tmop{Im} \Theta \cap \tmop{Im} i = i
(A_{(1 - d, 1]} \times [- 1, 1])$.

Objects that appear in the proof of \ref{Main} should be replaced by their apparent
analogy, we list a few as examples:

1. $\mathcal{M}$ or $D^2 \times S^1$ by $V_g$.

2. $T_J$ by $i (D^2 \times J)$ for any interval $J \subset [- 1, 1]$. In
particular, $D^2 \times P_0$ is replaced by $i (D^2 \times \{ 0 \})$

3. $A_J \times P_0$ by $i (A_J \times \{ 0 \})$ for any interval $J$

4. $A_{[1 - a, 1]} \times S^1$ by $\Theta (\partial V_g \times [0, a])$

5. $A_{[0, a]} \times S^1$ by $V_g - \Theta (\partial V_g \times (1 - a, 1])$

\

We will define another collar $\Theta'$ of $\partial V_g$ that is identical to
$\Theta$ except twisted near $D^2 \times \{ 0 \}$. Choose a smooth function
$\lambda : [- 1, 1] \rightarrow [0, 1]$ with

(i) $\lambda \equiv 0$ near the endpoints $- 1$ and $1$

(ii) $\lambda \equiv \varepsilon_0$ near 0 for some constant $\varepsilon_0 >
0$

(iii) $| \lambda' (t) | < \frac{1}{d}$ for all $t \in [- 1, 1]$

We now give an analogy of the First Step (Theorem \ref{1st}).

Define $\Theta' : \partial V_g \times [0, d) \hookrightarrow V_g$ by
\[ \Theta' (x, r) = \left\{\begin{array}{l}
     i ((1 - r) z, t + \lambda (t) r), \tmop{if} x = i (z, t), z \in \partial
     D^2, t \in [- 1, 1]\\
     \Phi (x, r), \tmop{otherwise}
   \end{array}\right. \]
That $\Theta'$ is a smooth collar is guaranteed by property (i) and (iii) of
$\lambda$. By Theorem \ref{IUC}, there exists some $r_0 \in (0, d)$ and an ambient
isotopy $G$ of $V_g$ rel $\partial V_g$ such that
\[ G_1 \circ f \circ \Theta (x, r) = \Theta' (x, r), x \in \partial V_g, 0
   \leqslant r \leqslant r_0 \]
The proof of the Second Step (Theorem \ref{2nd}) is valid with the above-mentioned
replacements. Hence $G_1 \circ f$ is ambient isotopic rel $\partial V_g$ to
some $f'$ such that
\begin{equation}
  f' \circ \Theta (x, r) = \Theta' (x, r), x \in \partial V_g, 0 \leqslant r
  \leqslant r_0
\end{equation}
for possibly smaller $r_0$ and that $f'_{|D^2 \times \{ 0 \}}$ is transverse
to $D^2 \times \{ 0 \}$.

The proof of the Third Step (Theorem \ref{3rd} and Lemma \ref{Lemmaforproof}) works equally well in
the induction step. As a result, $f$ is ambient isotopic rel $\partial V_g$
to some $f'$ that satisfy $(2)$ for even smaller $r_0$ and such that $f' (D^2
\times \{ 0 \}) \cap (D^2 \times \{ 0 \}) = \partial D^2 \times \{ 0 \}$.

The proof of the Fourth Step (Theorem \ref{4th}) should be modified as follows: When
choosing $r_0$, we require that $3 r_0 < d$, that $f$ satisfy $(2)$ for $0
\leqslant r \leqslant 3 r_0$ and
\[ \Theta (\partial V_g \times [0, 3 r_0]) \cap f (i (D^2 \times \{ 0 \})) = f
   (i (A_{[1 - 3 r_0, 1]} \times \{ 0 \})) \]
The map $\tilde{H}$ has to be redefined. Let $\tilde{H} : i (D^2 \times [- 1,
1]) \times I \rightarrow i (D^2 \times [- 1, 1])$ be defined by
\[ \tilde{H} (i (w, t), s) = i (w, t + s \lambda (t) \rho (| w |)) \]
Our choice of $\lambda$, $r_0$ and $\rho$ ensures that $\tilde{H}$ is an
isotopy and can be extended to an ambient isotopy of $V_g$ by identity. The
remainder of the proof carries verbatim, and we obtain a $f'$ ambient isotopic
to $f$ rel $\partial V_g$ such that $f' (i (D^2 \times \{ 0 \})) = i (D^2
\times \{ 0 \})$ and $f' = \tmop{id}$ near $\partial V_g$.

For the Fifth Step, we need to redefine the ambient isotopy $T : V_g \times I
\rightarrow V_g$ by defining $T : i (D^2 \times [- 1, 1]) \times I \rightarrow
i (D^2 \times [- 1, 1])$ by
\[ T (i (w, t), s) = i \left( \tau \left( w, \frac{s \lambda
   (t)}{\varepsilon_0} \right), t \right) \]
and extending by identity. The map $C : D^2 \times \mathbb{R} \rightarrow V_g$
should be redefined by $C (x, t) = i \left( x, \frac{2}{\pi} \tmop{arctant} t
\right)$. Note that the condition $\tmop{Im} \Phi \cap \tmop{Im} i = i (A_{(1
- d, 1]} \times [- 1, 1])$ is needed to prove Claim I. As a consequence of
this step, we obtain a $f'$ ambient isotopic to $f$ rel $\partial V_g$ that is
identical near $\partial V_g \cup i (D^2 \times \{ 0 \})$. Now we can choose a
handlebody $V_{g - 1}$ embedded in $\tmop{Int} V_g$ such that $f'$ is
identical on $V_g - V_{g - 1}$ and in a neighborhood of $\partial V_{g - 1}$
in $V_{g - 1}$. The inductive assumption implies that $f'_{|V_{g - 1}}$ is
ambient isotopic to identity rel $\partial V_{g - 1}$. By Theorem \ref{PathI} and
Theorem \ref{PathII}, there is an ambient isotopy from $f'_{|V_{g - 1}}$ to identity
through diffeomorphisms in $\tmop{Diff}_{\partial, U} (V_{g - 1})$. Thus $f'$
is ambient isotopic to identity rel $\partial V_g$, and the same is true for
$f$.
\end{proof}
\begin{proof} [Proof of Theorem \ref{MainTheoremSmooth}]
Let $f$ be a self diffeomorphism of $V_g$ such that $f_{| \partial V_g}$ is isotopic to identity. Then the same is true for $f^{-1}$, and there exist an ambient isotopy $F : \partial V_g \times I \rightarrow \partial V_g$ such that $F_1 = f^{-1}$. Choose a smooth $\zeta :I \rightarrow I$ such that

(i) $\zeta \equiv 1$ near 0

(ii) $\zeta \equiv 0$ near 1

Let $C : \partial V_g \times I \rightarrow V_g$ be a smooth collar of $\partial V_g$. The map $G : C(\partial V_g \times I) \times I \rightarrow C(\partial V_g \times I)$ defined by 
\[ G(C(x,s),t)= C(F(x,t \zeta(s))) \]
is an ambient isotopy of $C(\partial V_g \times I)$ that can be extended by identity to one of $V_g$, which we still denote as $G$. Since $G_{1 |\partial V_g}=f^{-1}$, $f \circ G$ isotope $f$ to a diffeomorphism of $V_g$ that is identical on $\partial V_g$. Now apply Theorem \ref{Handlebody}.
\end{proof}
\newpage
\bibliographystyle{plain}
\bibliography{DwindledBible}

Fang Sun,

School of Mathematical Sciences,

Capital Normal University,

105 West Third Ring Road North, Haidian District,

Beijing,

100048,

China

email: fsun@cnu.edu.cn\\

Xuezhi Zhao

School of Mathematical Sciences,

Capital Normal University,

105 West Third Ring Road North, Haidian District,

Beijing,

100048,

China

email: zhaoxve@mail.cnu.edu.cn

\end{document}